\newtheorem{theorem}{Theorem}[section]
\newtheorem{corollary}[theorem]{Corollary}
\newtheorem{lemma}[theorem]{Lemma}
\newtheorem{prop}[theorem]{Proposition}
\theoremstyle{definition}
\newtheorem{definition}[theorem]{Definition}
\newtheorem{example}[theorem]{Example}
\newtheorem{remark}[theorem]{Remark}
\newtheorem{claim}[theorem]{Claim}
\newtheorem{notation}[theorem]{Notation}
\newtheorem{question}[theorem]{Question}
\newtheorem{conjecture}[theorem]{Conjecture}
\numberwithin{equation}{section}
\newcommand{\vertiii}[1]{{\left\vert\kern-0.25ex\left\vert\kern-0.25ex\left\vert #1 
\right\vert\kern-0.25ex\right\vert\kern-0.25ex\right\vert}}
\newcommand{\R}{\mathbb{R}}
\newcommand{\C}{\mathbb{C}}
\newcommand\restr[2]{{
  \left.\kern-\nulldelimiterspace 
  #1 
  \vphantom{\big|} 
  \right|_{#2} 
  }}
\newenvironment{innerproof}
 {\proof}
 {\endproof}
\title[Decomposition of planar Lipschitz quotient mappings]{On the structural decomposition of planar Lipschitz quotient mappings}
\author[R. Hutchins]{Ricky Hutchins}
\address[R. Hutchins]{School of Mathematics, University of Birmingham, Edgbaston, Birmingham, B15 2TT, U.K.}
\email{{\tt rxh595@bham.ac.uk}}
\author[O. Maleva]{Olga Maleva}
\address[O. Maleva]{School of Mathematics, University of Birmingham, Edgbaston, Birmingham, B15 2TT, U.K.}
\email{\tt o.maleva@bham.ac.uk}
\keywords{Lipschitz quotient, strongly co-Lipschitz}
\subjclass[2010]{46B20, 46T20}
\begin{document}

\begin{abstract}
We show that for each fixed non-constant complex polynomial $P$ of the plane there exists a homeomorphism $h$ such that $P\circ h$ is a Lipschitz quotient mapping. This corrects errors in the construction given earlier in \cite{JLPS}. Further we introduce a stronger notion of pointwise co-Lipschitzness and characterise its equivalence to the standard pointwise definition whilst also highlighting its relevance to a long-standing conjecture concerning Lipschitz quotient mappings $\R^n\to\R^n$, $n\geq 3$. 
\end{abstract}

\maketitle

\section{Introduction}\label{introduction}
The motivation for this paper follows from a desire to understand how much planar Lipschitz quotient mappings are tied to the underlying structure of such mappings as discovered in \cite{JLPS}, see also Theorem~\ref{Lipschitz quotient decomposes into a complex polynomial and a homeomorphim} below. For a pair of metric spaces $X$ and $Y$ mappings $f:X\to Y$ are called Lipschitz quotient mappings provided they are Lipschitz and additionally satisfy a `dual' property of being co-Lipschitz. Namely, a mapping $f$ is Lipschitz quotient if there exist constants $0<c\leq L<+\infty$ such that 
    \begin{equation*} 
        B^Y_{cr}\left(f(x)\right)\stackrel{(1)}{\subseteq} f\left(B^X_r(x)\right)\stackrel{(2)}{\subseteq} B^Y_{Lr}\left(f(x)\right)
    \end{equation*}
for any $x\in X$ and all $r>0$. Here $B_s^Z(z)$ denotes the open ball of radius $s>0$ centred at $z\in Z$ where $Z=X,Y$. If only inclusion (2) is satisfied for each $x\in X$ and for every $r>0$, we say $f$ is $\left(L-\right)$Lipschitz. Similarly, if only inclusion (1) holds for each $x\in X$ and for every $r>0$, we say $f$ is $\left(c-\right)$co-Lipschitz.

If $f$ is a Lipschitz mapping we define $\text{Lip}(f)$, the Lipschitz constant of $f$, to be the infimum over all such $L>0$ for which inclusion (2) holds. Similarly, if $f$ is a continuous co-Lipschitz mapping we define $\text{co-Lip}(f)$, the co-Lipschitz constant of $f$, to be the supremum over all such $c>0$ for which inclusion (1) holds.  We remark that to guarantee the supremum for the co-Lipschitz constant exists we impose the continuity restriction since by assuming the axiom of choice there exist functions, for example from $\R$ to $\R$, which are surjective to $\R$ on every non-empty open subset, cf. \cite{everywhere}.

Pointwise notions of co- and Lipschitz mappings have been considered in different texts, for example \cite{Csornyei, MV}. Here if (2) is satisfied for a fixed $x\in X$ and all $r<r_0$, for some $r_0=r_0(x)>0$, then we say $f$ is pointwise $L$-Lipschitz at $x$. We define the notion of pointwise co-Lipschitzness similarly. Other local notions of co-Lipschitzness have also been considered, for example in \cite{MalevaMathe}. Another local notion we are going to use is local injectivity. We say a mapping $f:X\to Y$ between two metric spaces is locally injective at $x\in X$ if there exists $r>0$ such that the restriction of $f$ to $B_r^X(x)$ is injective. 

Co-Lipschitz mappings were first introduced in \cite{Gromov, James, Whyburn} but first systematically studied in \cite{BJLPS,JLPS}. The results in \cite{BJLPS} support the intuitive notion that Lipschitz quotient mappings are non-linear analogues for linear quotient mappings between Banach spaces. When considering linear quotient mappings $X\to Y$ the point preimage of each $x\in X$ is an affine subspace of $X$ with dimension $d:=\dim(X)-\dim(Y)$. In \cite{OlgaPaper1} it is shown that provided the constants $c$ and $L$ are sufficiently close then point preimages, under Lipschitz quotient mappings, cannot be $(d+1)$-dimensional. However, if no condition is imposed on the constants, it is shown in \cite{Csornyei} that there exist Lipschitz quotient mappings $\R^3\to\R^2$ which collapse a subset containing a 2-dimensional plane to a singular point. 

Such a result is not possible for planar Lipschitz quotient mappings, no matter how far the constants $c$ and $L$ are. It is shown in \cite{JLPS} that such mappings have a very specific structure:

\begin{theorem}[{\cite[Theorem~2.8(i)]{JLPS}}]\label{Lipschitz quotient decomposes into a complex polynomial and a homeomorphim}
Suppose $f:\C\to\C$ is a Lipschitz quotient mapping. Then $f=P\circ h$, where $h:\C\to\C$ is a homeomorphism and $P$ is a complex polynomial of one complex variable. 
\end{theorem}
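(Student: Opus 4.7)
My approach is to invoke the classical Stoïlow factorization theorem, which asserts that any non-constant continuous, open, and light (meaning all point preimages are totally disconnected) mapping $f:\C\to\C$ admits a factorization $f = \phi\circ h$ with $h:\C\to\C$ a homeomorphism and $\phi:\C\to\C$ a non-constant holomorphic function. Once such a factorization is available, identifying $\phi$ as a polynomial reduces to standard complex analysis.

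First I would verify the three hypotheses of Stoïlow's theorem. Continuity of $f$ is immediate from the Lipschitz property. Openness follows immediately from inclusion~(1) of the definition: since $f(B_r^{\C}(x))\supseteq B_{cr}^{\C}(f(x))$ for every $x$ and every $r>0$, every open subset of $\C$ has an open image. Lightness of the fibers, meaning that no fiber $f^{-1}(y)$ contains a nondegenerate continuum, is the main technical obstacle and is where planarity is essential. The strategy is to assume for contradiction that some connected component $K$ of a fiber contains at least two points, and then to use co-Lipschitzness to force the image of any small neighbourhood of $K$ to cover a fixed ball around $y$ with controlled multiplicity, while Lipschitzness simultaneously bounds the size of such neighbourhoods. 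A topological argument combining separation properties of planar continua (of Jordan-curve type) with local degree theory then produces the required contradiction. That the hypothesis of planarity is essential here is underscored by the example in \cite{Csornyei} mentioned in the introduction, which exhibits a Lipschitz quotient $\R^3\to\R^2$ whose fiber contains a 2-dimensional plane; that example rules out any approach that does not exploit dimension two in a nontrivial way.

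Having obtained $f = \phi\circ h$ with $\phi$ entire, to conclude that $\phi$ is a polynomial I would show that $f$ (and therefore $\phi$) is proper, i.e., $|f(z)|\to\infty$ as $|z|\to\infty$. Properness of an entire function forces it to have at most a pole at infinity, which is exactly the characterisation of polynomials among entire self-maps of $\C$. Properness of $f$ itself should follow by combining Lipschitzness, which gives $|f(z)|\leq |f(0)|+L|z|$ as an upper bound, with the co-Lipschitz plus lightness information already gathered: these jointly guarantee that each fiber is discrete and bounded, and that fibers cannot accumulate at infinity within bounded target regions (a sequence $z_n\to\infty$ with $f(z_n)$ bounded would, via co-Lipschitzness applied in balls around the $z_n$, force overlaps in the image incompatible with light planar covering behaviour). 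The hardest part of the argument is unambiguously the verification of lightness; once that planar-specific property is established, the Stoïlow machinery together with standard complex analysis completes the proof.
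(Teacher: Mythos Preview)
The present paper does not prove this theorem; it is simply quoted from \cite{JLPS} and serves as background motivation for the converse direction treated in Proposition~\ref{main result}. There is therefore no ``paper's own proof'' to compare against.

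That said, your outline via Sto\"{\i}low factorisation is essentially the strategy of the original reference \cite{JLPS}. Two remarks on the details. First, for lightness/discreteness the paper does point to the relevant source: in the proof of Corollary~\ref{Lip quo is strongly co-Lipschitz} it cites \cite[Proposition~4.3]{BJLPS} (equivalently \cite[Proposition~2.1]{JLPS}) for the fact that a continuous co-Lipschitz map $\C\to\C$ is discrete. Your instinct that this is the planar-specific and hardest step is correct. Second, your argument that $\phi$ is a polynomial via properness is soft as written: the sentence ``overlaps in the image incompatible with light planar covering behaviour'' is not yet a proof. The route actually taken in \cite{JLPS} is cleaner: one shows directly that the cardinality of each fibre $f^{-1}(y)$ is uniformly bounded (in terms of the ratio $L/c$), and since $h$ is a bijection this bounds the number of preimages of $\phi$, forcing the entire function $\phi$ to be a polynomial. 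Alternatively, a properness statement of the form $|f(x)|\geq c(|x|-M)$ for large $|x|$ is available (see the use of \cite[Theorem~2.8(1)]{MV} in the proof of Lemma~\ref{square is not a Lip quo}), which would close your argument, but note that this estimate itself relies on the finiteness of the zero set $\{z:f(z)=0\}$, so discreteness is again the real input.
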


Note that in Theorem~\ref{Lipschitz quotient decomposes into a complex polynomial and a homeomorphim} we did not specify the norms associated to the domain and co-domain. This is justified since passing to an equivalent norm preserves whether a mapping is a Lipschitz quotient; hence in the finite dimensional setting there is no need to specify the norm considered. We also highlight here that the original statement of Theorem~\ref{Lipschitz quotient decomposes into a complex polynomial and a homeomorphim} in \cite{JLPS} is given for Lipschitz quotient mappings from $\R^2$ to itself. The restatement in Theorem~\ref{Lipschitz quotient decomposes into a complex polynomial and a homeomorphim} follows due to the natural identification of $\R^2$ with $\C$, which is required in the definition of the polynomial $P$ in any case. 

Surprisingly little is known for higher dimensional analogues of Theorem~\ref{Lipschitz quotient decomposes into a complex polynomial and a homeomorphim}. It is not even known if Lipschitz quotient mappings $\R^n\to \R^n$, $n\geq 3$ are discrete, see Conjecture~\ref{conjecture regarding discreteness of Lip quo}. 

In light of Theorem~\ref{Lipschitz quotient decomposes into a complex polynomial and a homeomorphim}, and in the search of converses to such a statement, the authors of \cite{JLPS} pose questions regarding the uniqueness of the homeomorphism $h$ obtained from the decomposition of a Lipschitz quotient mapping and whether a converse statement to Theorem~\ref{Lipschitz quotient decomposes into a complex polynomial and a homeomorphim} holds also. It is shown that, up to a linear transformation, the homeomorphism obtained via the decomposition of a Lipschitz quotient mapping is unique, see \cite[p.~22]{JLPS}. 

In connection to the structural decomposition of planar Lipschitz quotient mappings, we ask the following questions concerning converse statements to Theorem~\ref{Lipschitz quotient decomposes into a complex polynomial and a homeomorphim}. 

\begin{question}\label{question: positive converses}
\begin{enumerate}[label=(\alph*)]
    \item \label{fixed h, Lip quo} Can every planar homeomorphism $h:\C\to\C$ be obtained via a decomposition of a Lipschitz quotient mapping? In other words, is it true that for every homeomorphism $h:\C\to\C$ there exists a non-constant complex polynomial $P$ such that $P\circ h$ is a Lipschitz quotient mapping? 
    \item \label{fixed P, Lip quo} Can every non-constant complex polynomial $P$ be obtained via a decomposition of a planar Lipschitz quotient mapping? In other words, is it true that for every non-constant polynomial $P$ there exists a homeomorphism $h:\C\to\C$ such that $P\circ h$ is a Lipschitz quotient mapping?
\end{enumerate}
\end{question}
We begin by considering Question~\ref{question: positive converses}~\ref{fixed h, Lip quo}. We provide a planar homeomorphism $h$ such that $P\circ h$ is not Lipschitz quotient for every non-constant complex polynomial $P$. Indeed, consider the homeomorphism $h:\C\to\C$ given by $h(z)=|z|^2e^{i\arg(z)}$. Observe that $P\circ h$ is not Lipschitz for every non-constant complex polynomial $P$. This follows simply as
    \begin{equation*}
       \lim\limits_{R\to +\infty} \dfrac{\left|P\circ h(R)-P\circ h(0)\right|}{R}= +\infty.
    \end{equation*}
The main motivation of this paper is to consider Question~\ref{question: positive converses}~\ref{fixed P, Lip quo}, as the authors of \cite{JLPS} do. The authors claim to answer this in \cite[Proposition~2.9]{JLPS} in the positive, and provide a sketch proof of the following statement. 

\begin{prop}\label{main result}
Let $P$ be a non-constant polynomial in one complex variable with complex coefficients. Then there exists a homeomorphism $h$ of the plane such that $f=P\circ h$ is a Lipschitz quotient mapping. 
\end{prop}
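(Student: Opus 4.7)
The plan is to construct $h$ piecewise, treating the behaviour of $P$ at infinity and at its finitely many critical points separately, then gluing together. After normalising $P$ to be monic of degree $n$ (a bi-Lipschitz pre-composition does not affect the Lipschitz quotient conclusion), the canonical model to aim for is $f_0(z) = z^n/|z|^{n-1}$, which preserves moduli, multiplies arguments by $n$, and is a Lipschitz quotient of $\C$ with Lipschitz constant $n$ and co-Lipschitz constant $1$.

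For the outer piece, the B\"ottcher coordinate near infinity produces a conformal map $\psi(z) = z + O(1)$ on an exterior domain $\{|z| > R_0\}$ satisfying $\psi(z)^n = P(z)$. Setting $g(z) = z/|z|^{(n-1)/n}$, so that $g(z)^n = f_0(z)$ and $g$ is a homeomorphism of $\C$, on the complement of a sufficiently large closed disk $D$ I would define
\begin{equation*}
h(z) := \psi^{-1}(g(z)),
\end{equation*}
which is well defined whenever $g(\C\setminus D)$ lies in the image of $\psi$. A direct computation gives $P(h(z)) = \psi(h(z))^n = g(z)^n = f_0(z)$, so on $\C\setminus D$ the composition $P\circ h$ coincides with the canonical Lipschitz quotient $f_0$ and the required inclusions hold there with the constants of $f_0$.

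For the inner piece, on $D$ the polynomial $P$ has finitely many critical points $c_1,\ldots,c_m$ of local degrees $d_1+1,\ldots,d_m+1$. Around each $c_j$ there is a holomorphic chart in which $P$ is modelled by $w\mapsto w^{d_j+1}$; in a small disjoint disk $U_j$ around $c_j$ I would define $h$ so that in these coordinates $P\circ h$ equals the canonical radial Lipschitz quotient $w\mapsto w^{d_j+1}/|w|^{d_j}$ of degree $d_j+1$. On the remaining branched-cover-free region $D\setminus\bigcup_j U_j$, where $P$ is a local diffeomorphism, $h$ extends as a homeomorphism interpolating the boundary data already fixed on $\partial D$ and on each $\partial U_j$, arranged so that $P\circ h$ satisfies the Lipschitz quotient inclusions on balls inside $D$.

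Gluing gives a global homeomorphism $h:\C\to\C$. The Lipschitz bound is the maximum of the finitely many local bounds. The co-Lipschitz inclusion $B^\C_{cr}((P\circ h)(x))\subseteq (P\circ h)(B^\C_r(x))$ must be verified for balls entirely inside $\C\setminus D$ (where $f_0$'s co-Lipschitz constant applies), balls entirely inside $D$ (further sub-cased by whether the ball meets some $U_j$), and balls straddling $\partial D$. The main obstacle is the straddling case together with the co-Lipschitz inclusion near critical values of $P$: the interpolating part of $h$ inside $D$ must be fine-tuned to respect simultaneously the bounds imposed by $f_0$ on the exterior and by the local radial models around the critical points, uniformly over both $x$ and $r$. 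Correcting this matching is precisely where the sketch in \cite[Proposition~2.9]{JLPS} requires repair.
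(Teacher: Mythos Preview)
Your proposal is a plan rather than a proof: you correctly identify the interpolation on the intermediate region as ``the main obstacle'' and then stop. Saying that $h$ ``extends as a homeomorphism\ldots arranged so that $P\circ h$ satisfies the Lipschitz quotient inclusions'' is precisely the statement to be proved. Without a concrete construction of $h$ on $D\setminus\bigcup_j U_j$ and a verification of both the Lipschitz and the co-Lipschitz bounds there and across the seams $\partial D$ and $\partial U_j$, the argument is no more complete than the sketch in \cite{JLPS} it sets out to repair. There is also a loose end at the outer seam: your B\"ottcher-type root $\psi$ takes the round circle $\partial D$ under $h=\psi^{-1}\circ g$ to a non-round curve, so the boundary data the interior interpolant must match are already non-explicit, and you give no mechanism for controlling $P\circ h$ uniformly across that curve.

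The paper avoids both the B\"ottcher coordinate and abstract local charts by making every piece of $h$ explicit and radial. Crucially, on the intermediate region between the large disk and the small disks $\overline{B}_r(z_j)$ around the roots of $P'$, the map $h$ is simply the \emph{identity}; hence $P\circ h=P$ there, and pointwise co-Lipschitzness follows from the inverse function theorem together with a compactness bound on $1/|P'|$. On the exterior, $P\circ h$ is not exactly the model $f_n(z)=|z|e^{in\arg z}$ but differs from it by the lower-order terms $a_k|z|^{k/n}e^{ik\arg z}$, whose Lipschitz constants become arbitrarily small far out (Lemma~\ref{k/n general function lipschitz}), so they do not disturb the Lipschitz quotient estimates of $f_n$. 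Near each $z_j$ the map is the explicit radial $z\mapsto z_j+r^{1-1/m_j}|z-z_j|^{1/m_j}e^{i\arg(z-z_j)}$. The only seams left are the circles $\partial B_r(z_j)$, and the genuinely new ingredient is the verification of co-Lipschitzness there: the paper introduces the \emph{strongly co-Lipschitz} condition (Definition~\ref{definition for strongly co-Lipschitz}), checks the inequality $|F(z)-F(y)|\ge c|z-y|$ separately for $y$ on each side of the seam, and then invokes openness of $F$ via Corollary~\ref{nicer co-lip condition for open maps} to conclude pointwise co-Lipschitzness at the seam points. This seam analysis is exactly the piece your outline flags but does not supply.
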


However, as we show in Section~\ref{construction of Lip quo}, the construction of their mapping $h$ is not in fact a homeomorphism of the plane. In this paper we prove Proposition~\ref{main result}. To do so we follow the framework provided in \cite{JLPS} but correct oversights in the original sketch and in doing so introduce a stronger (pointwise) notion of co-Lipschitzness, namely strongly co-Lipschitz.

With this new notion, we pose a question regarding the existence of Lipschitz quotient mappings $\R^n\to\R^n$, $n\geq 3$ which are strongly co-Lipschitz at some point. We explain the logical equivalence between this question and the long-standing conjecture of \cite{JLPS} whether such mappings are necessarily discrete. Moreover, with this new notion, we consider the following question. 

\begin{question}\label{question: negative converse}
For a fixed homeomorphism $h:\C\to\C$ does there exist a non-constant complex polynomial $P$ such that $P\circ h$ is not a Lipschitz quotient mapping?
\end{question}

We answer Question~\ref{question: negative converse} in the positive in Lemma~\ref{corr about fixed homeo, not lip qup}. 

\section{Preliminaries}
Throughout this paper, for a metric space $X$ and $S\subseteq X$, $\text{Int}(S)$ denotes the topological interior of $S$ and $\partial S$ represents the boundary of $S$.

\begin{notation}\label{definition of S(P')}
For any $z\in\C\setminus\{0\}$ we take $\arg(z)\in \left(-\pi,\pi\right]$ to denote the \textit{principal argument} of $z$. Further, for any $a>0$, $b\in\R$ we define $|z|^ae^{ib\arg(z)}=0$ when $z=0$. 

For any non-constant complex polynomial $P$ in one complex variable and $a>0$ we define the closed set 
        \begin{equation}\label{V_beta r definition}
            V^P_{a}=\bigcup\limits_{z_j\in S(P')}\overline{B}_{a}(z_j),
        \end{equation}
where $P'$ is the derivative of $P$ and $S(P')=\left\{z\in\C:P'(z)=0\right\}$.
\end{notation}

We now state properties of particular functions which are important in the judicious choose of $r>0$ which we are making in Claim~\ref{choice of r}. First, let $P$ be a fixed non-constant complex polynomial of one complex variable, $P'$ be its derivative and $z_j\in S(P')$. Of course if $P$ is non-zero and linear, then $S(P')=\emptyset$. Define the polynomial
    \begin{equation}\label{equation for Q_j}
        Q_j(z):=\dfrac{P(z)-P(z_j)}{(z-z_j)^{m_j}},
    \end{equation} 
where $m_j\geq 1$ is the multiplicity of $z_j$ as a root of the polynomial $P(z)-P(z_j)$. Note, for future reference, that $P(z)=(z-z_j)^{m_j}Q_j(z)+P(z_j)$. Further, by the maximality of $m_j$,
    \begin{equation}\label{Q_j(z_j) is not zero}
        Q_j(z_j)\neq 0.
    \end{equation}
We define the expansion of the polynomial $Q_j$ about $z_j$ by 
    \begin{equation}\label{taylor expansion of Q_j about z_j}
        Q_j(z)=\sum\limits_{l=0}^{n-m_j}c_{l,j}(z-z_j)^l
    \end{equation}
where $n=\text{deg}(P)$ and $c_{l,j}\in\C$. Thus \eqref{Q_j(z_j) is not zero} implies  ${c_{0,j}=Q_j(z_j)\neq 0}$ for each $j$ such that $z_j\in S(P')$.

We now define a function which proves useful in the construction of the Lipschitz quotient mapping in Section~\ref{construction of Lip quo}. For each $m\geq 1$, let $A_m\subseteq \C\times\C$ be defined by 
    \begin{equation*}
        A_m:=\left\{(z,w):|z|e^{im\arg(z)}\neq |w|e^{im\arg(z)}\right\}\cup\left\{(w,w):w\in\C\setminus\{0\}\right\}. 
    \end{equation*}
Now, for each $m\geq 1$ and $l\in \{1,\dots,m\}$ we define the mapping $\Phi_{l,m}:A_m\to\C$ by 
    \begin{equation}\label{definition of function Phi}
        \Phi_{l,m}(z,w)=\begin{cases}
            \dfrac{|z|^{\frac{l+m}{m}}e^{i(l+m)\arg(z)}-|w|^{\frac{l+m}{m}}e^{i(l+m)\arg(w)}}{|z|e^{im\arg(z)}-|w|e^{im\arg(w)}},\quad&\text{if $z\neq w$};\\
            \text{ }\\
            \dfrac{l+m}{m}|w|^{\frac{l}{m}}e^{il\arg(w)},\quad&\text{if $z=w$}.\\
           
        \end{cases}
    \end{equation}
    
\begin{lemma} \label{Phi is continuous}
    Let $m\geq 1$ and $1\leq l\leq m$. For each $w\in \C\setminus\{0\}$, there exists $\rho>0$ such that $B_{\rho}(w)\times\{w\}\subseteq A_m$ and 
        \begin{equation*}
            \lim\limits_{\substack{z\to w\\ z\in B_{\rho}(w)}}\Phi_{l,m}(z,w)=\Phi_{l,m}(w,w). 
        \end{equation*}
\end{lemma}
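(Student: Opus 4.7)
The plan is to introduce the substitution $\phi:\C\setminus\{0\}\to\C\setminus\{0\}$ defined by
$$\phi(z)=|z|^{1/m}e^{i\arg z}=\frac{z}{|z|^{(m-1)/m}}.$$
The second expression shows that $\phi$ is continuous on $\C\setminus\{0\}$ (so in particular the issue of the branch cut of $\arg$ does not matter), and a direct calculation shows that $\phi$ is in fact a homeomorphism of $\C\setminus\{0\}$ onto itself, with inverse $\alpha\mapsto \alpha|\alpha|^{m-1}$. Crucially, the identities
$$\phi(z)^m=|z|e^{im\arg z},\qquad \phi(z)^{l+m}=|z|^{(l+m)/m}e^{i(l+m)\arg z}$$
transform the definition \eqref{definition of function Phi} into
$$\Phi_{l,m}(z,w)=\frac{\phi(z)^{l+m}-\phi(w)^{l+m}}{\phi(z)^m-\phi(w)^m}\quad\text{whenever }\phi(z)^m\neq\phi(w)^m.$$

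To produce $\rho$, I would observe that the condition $\phi(z)^m=\phi(w)^m$ is equivalent to $\phi(z)\in\{\zeta\,\phi(w):\zeta^m=1\}$, and since $\phi$ is a homeomorphism this corresponds to exactly $m$ distinct nonzero values of $z$, one of which is $w$ itself (the case $\zeta=1$). Choosing $\rho>0$ small enough that $B_\rho(w)$ excludes both the origin and the other $m-1$ solutions, I get that for every $z\in B_\rho(w)$ either $z=w$ or $|z|e^{im\arg z}\neq|w|e^{im\arg w}$; equivalently $B_\rho(w)\times\{w\}\subseteq A_m$.

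For the limit, the standard factorization $u^k-v^k=(u-v)\sum_{j=0}^{k-1}u^jv^{k-1-j}$ applied to both numerator and denominator gives, for $z\in B_\rho(w)\setminus\{w\}$ (where $\phi(z)\neq\phi(w)$ by injectivity of $\phi$),
$$\Phi_{l,m}(z,w)=\frac{\displaystyle\sum_{k=0}^{l+m-1}\phi(z)^k\phi(w)^{l+m-1-k}}{\displaystyle\sum_{k=0}^{m-1}\phi(z)^k\phi(w)^{m-1-k}}.$$
Letting $z\to w$ and invoking continuity of $\phi$ at $w\neq 0$, the numerator tends to $(l+m)\phi(w)^{l+m-1}$ and the denominator tends to $m\,\phi(w)^{m-1}\neq 0$, so the ratio tends to
$$\frac{l+m}{m}\phi(w)^l=\frac{l+m}{m}|w|^{l/m}e^{il\arg w}=\Phi_{l,m}(w,w),$$
as required.

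The main obstacle I anticipate is spotting the substitution $\phi$ and verifying it is genuinely continuous on $\C\setminus\{0\}$; a naive approach that works directly with $|z|e^{im\arg z}$ must either rewrite it as $z^m/|z|^{m-1}$ or handle the $\arg$ discontinuity across the negative real axis explicitly (which, for integer multiples of $\arg$, turns out to be invisible, precisely because $\phi(z)=z/|z|^{(m-1)/m}$ is continuous). Once this substitution is in place, the problem collapses to the entirely routine $u^k-v^k$ factorization and a continuity argument.
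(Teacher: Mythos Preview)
Your argument is correct. The substitution $\phi(z)=z/|z|^{(m-1)/m}$ is well-defined and continuous on $\C\setminus\{0\}$, the identities $\phi(z)^m=|z|e^{im\arg z}$ and $\phi(z)^{l+m}=|z|^{(l+m)/m}e^{i(l+m)\arg z}$ are easily verified, and the factorisation of $u^k-v^k$ together with continuity of $\phi$ and $\phi(w)\neq 0$ gives the required limit.

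The paper takes a different, somewhat slicker, route: it writes $\Phi_{l,m}(z,w)=\dfrac{g(f(z))-g(f(w))}{f(z)-f(w)}$ with $f(z)=|z|e^{im\arg z}$ and $g(u)=u^{(l+m)/m}$, and then simply recognises this as a difference quotient for $g$ evaluated along the continuous path $f(z)\to f(w)$, so the limit is $g'(f(w))=\frac{l+m}{m}f(w)^{l/m}=\Phi_{l,m}(w,w)$. Your approach trades this one-line appeal to differentiability of a fractional power for an explicit polynomial factorisation; the cost is a few more lines, the gain is that you never need to invoke (or worry about branches of) the map $u\mapsto u^{(l+m)/m}$, since after the substitution $u=\phi(z)$ everything is a genuine polynomial in $u$. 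Both arguments rest on the same underlying observation, namely that $|z|e^{im\arg z}=\phi(z)^m$ is a continuous function of $z$ despite the apparent branch in $\arg$.
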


\begin{proof}
Note for $w\in\C\setminus\{0\}$ fixed that there exist finitely many points $z\in\C$ such that $(z,w)\not\in A_m$; namely this happens exactly when $z\neq w$ but $|z|=|w|$ and $e^{im\arg(z)}=e^{im\arg(w)}$. Hence, there exists $\rho>0$ such that $B_{\rho}(w)\times\{w\}\subseteq A_m$. 

If $z\in B_{\rho}(w)\setminus\{w\}$, then $\Phi_{l,m}(z,w)=(g(f(z))-g(f(w)))/(f(z)-f(w))$ where $f,g:\C\to\C$ are given by $f(z)=|z|e^{im\arg(z)}$ and $g(z)=z^{(l+m)/m}$. As $w$ is fixed, $f$ is continuous at $w$ and $g$ is differentiable at $f(w)$, observe that 
    \begin{align*}
        \lim\limits_{\substack{z\to w\\ z\in B_{\rho}(w)}}\Phi_{l,m}(z,w)=\lim\limits_{\substack{z\to w\\ z\in B_{\rho}(w)}}\dfrac{g(f(z))-g(f(w))}{f(z)-f(w)}=g'(f(w))=\Phi_{l,m}(w,w). 
    \end{align*}
\end{proof}

\begin{corollary} \label{Phi is bounded}
Let $m\geq 1$ and $1\leq l\leq m$. For each $w\in\C\setminus\{0\}$ and $\varepsilon>0$ there exists $\rho>0$ such that $B_{\rho}(w)\times\{w\}\subseteq A_m$ and whenever $z\in B_{\rho}(w)$, 
    \begin{equation}\label{properties of Phi following from uniform continuity}
      |\Phi_{l,m}\left(z,w\right)|<\varepsilon+\left|\Phi_{l,m}(w,w)\right|. 
    \end{equation}
\end{corollary}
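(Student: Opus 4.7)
The plan is to deduce this corollary as a straightforward consequence of Lemma~\ref{Phi is continuous}, using only continuity at $w$ together with the reverse triangle inequality. Since the lemma already produces a radius $\rho_0>0$ with $B_{\rho_0}(w)\times\{w\}\subseteq A_m$ such that $\Phi_{l,m}(z,w)\to\Phi_{l,m}(w,w)$ as $z\to w$ inside $B_{\rho_0}(w)$, the only thing left to do is to translate this limit statement into the uniform bound \eqref{properties of Phi following from uniform continuity} on a possibly smaller neighbourhood.

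First I would apply Lemma~\ref{Phi is continuous} to fix such a $\rho_0>0$. Then, given $\varepsilon>0$, I would use the definition of limit to find $\rho\in(0,\rho_0]$ such that $|\Phi_{l,m}(z,w)-\Phi_{l,m}(w,w)|<\varepsilon$ for every $z\in B_\rho(w)\setminus\{w\}$. Since $B_\rho(w)\subseteq B_{\rho_0}(w)$, the inclusion $B_\rho(w)\times\{w\}\subseteq A_m$ is automatic.

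For $z\in B_\rho(w)\setminus\{w\}$, the triangle inequality then gives
\[
|\Phi_{l,m}(z,w)|\leq |\Phi_{l,m}(w,w)|+|\Phi_{l,m}(z,w)-\Phi_{l,m}(w,w)|<|\Phi_{l,m}(w,w)|+\varepsilon,
\]
which is exactly \eqref{properties of Phi following from uniform continuity}. For the remaining case $z=w$, the inequality $|\Phi_{l,m}(w,w)|<|\Phi_{l,m}(w,w)|+\varepsilon$ holds trivially because $\varepsilon>0$. There is really no obstacle here: the statement is a packaging of the continuity already established in Lemma~\ref{Phi is continuous}, and the corollary is recorded in this form only because it is the version that will be convenient in later applications.
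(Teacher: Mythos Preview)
Your proof is correct and is exactly the argument the paper has in mind: the corollary is stated without proof in the paper, as an immediate consequence of Lemma~\ref{Phi is continuous} via the definition of limit and the triangle inequality, which is precisely what you do.
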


The following result concerns planar mappings which have the inherent structure of a Lipschitz quotient mapping. The below identifies a finite set $E$ such that mappings of the form $P\circ h$ are locally injective on $\C\setminus E$. In the following proof $\text{card}(S)$ represents the cardinality of the set $S$.  

\begin{prop}\label{reworded cristina}
Let $f:\C\to\C$ be a mapping such that $f=P\circ h$ where $P$ is a non-constant complex polynomial in one complex variable and $h:\C\to\C$ is a homeomorphism. Then there exists a finite subset $E\subseteq \C$ such that $f$ is locally injective at each $x\in \C\setminus E$. Moreover, $E=h^{-1}\left(S(P')\right)$. 
\end{prop}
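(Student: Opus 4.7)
The plan is to set $E := h^{-1}(S(P'))$ from the outset and verify three things: (i) $E$ is finite, (ii) $f$ is locally injective at every $x \in \C \setminus E$, and (iii) $f$ is not locally injective at any $x \in E$; together these yield both the existence assertion and the identification in the ``moreover'' clause. Item (i) is immediate. Writing $n = \deg P \geq 1$, the derivative $P'$ has degree $n-1$, so the fundamental theorem of algebra gives $|S(P')| \leq n-1$, and since $h$ is a bijection $|E| = |S(P')|$. In the degenerate case that $P$ is linear, $S(P') = E = \emptyset$ and there is nothing to check beyond the injectivity of $f$ as a composition of bijections.

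For (ii), I would fix $x \in \C \setminus E$ and set $w := h(x)$, so that $P'(w) \neq 0$. Viewing $P$ as a real-differentiable map from $\R^2$ to $\R^2$, its derivative at $w$ acts by multiplication by the complex number $P'(w)$, so has real Jacobian determinant $|P'(w)|^2 > 0$. The real inverse function theorem then furnishes an open neighborhood $U$ of $w$ on which $P|_U$ is injective. Since $h$ is a homeomorphism, $V := h^{-1}(U)$ is an open neighborhood of $x$ with $h(V) \subseteq U$, and $h|_V$ is injective. Hence $f|_V = P|_{h(V)} \circ h|_V$ is an injection, and shrinking $V$ to an inscribed ball gives the pointwise local injectivity of $f$ at $x$ in the sense defined earlier.

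Step (iii) is the only part requiring more than elementary tools, and is the step I expect to be the main obstacle. For $x \in E$, set $w := h(x) \in S(P')$; then in the factorisation \eqref{equation for Q_j} we have $m_j \geq 2$ and $Q_j(w) \neq 0$. By continuity $Q_j$ is nonzero on a disk about $w$, and a standard argument-principle computation---or equivalently a local holomorphic change of coordinates putting $P - P(w)$ in the form $\zeta \mapsto \zeta^{m_j}$---shows that every sufficiently small punctured disk about $w$ contains $m_j \geq 2$ distinct points sharing a common $P$-value. Pulling two such points back through the injective homeomorphism $h$ produces, in every neighborhood of $x$, distinct points on which $f$ agrees, so $f$ fails to be locally injective at $x$. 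With this complex-analytic fact in hand, the proposition reduces to bookkeeping; items (i) and (ii) rely only on the fundamental theorem of algebra and the inverse function theorem.
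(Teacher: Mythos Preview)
Your steps (i) and (ii) are essentially identical to the paper's proof: the paper sets $E:=h^{-1}(S(P'))$, bounds $|S(P')|\le\deg P-1$, and, for $x_0\notin E$, invokes the local injectivity of $P$ near $h(x_0)$ (citing a holomorphic inverse function theorem rather than the real one you use via the Jacobian) to pull back an injective neighbourhood through $h$.

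Your step (iii) goes beyond what the paper actually proves. The paper reads the ``moreover'' clause merely as \emph{naming} the finite set $E$ that works, not as asserting that $E$ is precisely the set where local injectivity fails; accordingly its proof stops after (i) and (ii). Your argument for (iii) is correct (indeed $m_j\ge2$ at any $w\in S(P')$, and the local normal form $\zeta\mapsto\zeta^{m_j}$ yields non-injectivity in every punctured neighbourhood), so you have established a genuinely sharper statement than the paper does. If you want to match the paper's level of generality you may omit (iii); if you keep it, you should flag that you are proving more than the proposition as stated and used.
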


\begin{proof}
Fix $x_0\in\C$ such that $h(x_0)\not\in S(P')$. We claim $f$ is locally injective at $x_0$. Since $P'(h(x_0))\neq 0$, by \cite[Theorem~7.5]{manifolds}, there exists an open neighbourhood $N_{h(x_0)}$ of $h(x_0)$ such that $\restr{P}{N_{h(x_0)}}$ is injective. Therefore $f=P\circ h$ is injective on the open neighbourhood $G=h^{-1}(N_{h(x_0)})$ of $x_0$.

As this holds for any $x_0\in \C$ such that $h(x_0)\not\in S(P')$, $f$ is locally injective outside of $E=h^{-1}(S(P'))$. Since $P'$ is a non-zero polynomial, $\text{card}(S(P'))\leq \text{deg}(P)-1$. As $h$ is bijective, $\text{card}(E)=\text{card}(S(P'))$.
\end{proof}

We state a couple of standard results regarding Lipschitz mappings. 

\begin{lemma}\label{continuous and lip on set, then lip on closure}
Let $X$, $Y$ be metric spaces, $A\subseteq X$ dense in $X$ and $L>0$. If $f:X\to Y$ is a continuous mapping such that $\restr{f}{A}$ is $L$-Lipschitz, then $f$ is $L$-Lipschitz. 
\end{lemma}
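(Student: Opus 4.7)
The plan is a direct density-plus-continuity argument. The content of $L$-Lipschitzness is a two-variable inequality, so I want to approximate any pair of points in $X$ by pairs of points in $A$ and pass to the limit.

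First I would fix arbitrary $x,y\in X$ and use the density of $A$ in $X$ to choose sequences $(a_n), (b_n)\subseteq A$ with $a_n\to x$ and $b_n\to y$ in the metric of $X$. Since $\restr{f}{A}$ is $L$-Lipschitz, for every $n$ we have
\begin{equation*}
d_Y\bigl(f(a_n),f(b_n)\bigr)\leq L\, d_X(a_n,b_n).
\end{equation*}
The right-hand side converges to $L\,d_X(x,y)$ because the metric $d_X$ is jointly continuous. For the left-hand side I would invoke the continuity of $f$ on the whole of $X$, which gives $f(a_n)\to f(x)$ and $f(b_n)\to f(y)$, and then use joint continuity of $d_Y$ to conclude that $d_Y(f(a_n),f(b_n))\to d_Y(f(x),f(y))$.

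Passing to the limit in the displayed inequality then yields $d_Y(f(x),f(y))\leq L\,d_X(x,y)$, and since $x,y$ were arbitrary this is exactly $L$-Lipschitzness of $f$ on $X$. There is no real obstacle here; the only thing to be careful about is that continuity of $f$ is genuinely needed on all of $X$ (not merely on $A$) in order to identify the limits of $f(a_n)$ and $f(b_n)$ with $f(x)$ and $f(y)$ respectively, and that joint continuity of the target metric is what allows the two one-sided limits to be combined.
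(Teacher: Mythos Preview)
Your argument is correct and is exactly the standard density-plus-continuity proof one would expect. The paper itself states this lemma as a standard result and does not supply a proof, so there is no alternative approach to compare against; your write-up fills that gap cleanly.
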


The following lemma ensures that a mapping which is pointwise Lipschitz everywhere, with a uniform constant, is necessarily Lipschitz, with the same constant. However, for this we need the linear structure induced by normed spaces. 

\begin{lemma}\label{pointwise Lip on convex open implies restriction is lipschitz}
Let $X,Y$ be normed spaces, $U\subseteq X$ be open and convex and $L>0$. If $f:X\to Y$ is pointwise $L$-Lipschitz at each $x\in U$, then $\restr{f}{U}$ is $L$-Lipschitz. 
\end{lemma}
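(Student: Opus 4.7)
The plan is to reduce Lipschitz control on arbitrary pairs in $U$ to one-dimensional control along the segment joining them. Fix distinct $a, b \in U$; by convexity of $U$, the segment $\gamma(t) = a + t(b-a)$, $t \in [0,1]$, lies entirely in $U$, so the pointwise $L$-Lipschitz hypothesis is available at every point of $\gamma([0,1])$. Since pointwise $L$-Lipschitzness at $x$ immediately forces continuity of $f$ at $x$, the composition $f \circ \gamma$ is continuous on $[0,1]$.

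Next, I would run a standard supremum argument. Let
$$T = \{t \in [0,1] : \|f(\gamma(t)) - f(a)\| \leq L t \|b-a\|\},$$
which contains $0$ and is closed by continuity of $f\circ\gamma$. Set $t_0 = \sup T$, so $t_0 \in T$. If $t_0 < 1$, the pointwise $L$-Lipschitz condition at $\gamma(t_0)$ provides $r>0$ with $\|f(y) - f(\gamma(t_0))\| \leq L \|y - \gamma(t_0)\|$ whenever $\|y - \gamma(t_0)\| < r$. Then for $t_0 < t < \min\{1,\, t_0 + r/\|b-a\|\}$, combining $t_0 \in T$ with the triangle inequality yields $\|f(\gamma(t)) - f(a)\| \leq L t \|b-a\|$, contradicting $t_0 = \sup T$. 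Therefore $t_0 = 1$, which is precisely $\|f(b) - f(a)\| \leq L\|b-a\|$, as required.

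I expect the only mildly delicate point to be the translation of the paper's pointwise $L$-Lipschitz condition, phrased as the inclusion $f(B_r(x)) \subseteq B_{Lr}(f(x))$ for all $r < r_0(x)$, into the inequality $\|f(y) - f(x)\| \leq L\|y - x\|$ used above: one applies the inclusion with $r = \|y - x\| + \eta$ for small $\eta > 0$ and lets $\eta \downarrow 0$, using that the balls in question are open. With that observation in hand, the argument has no further obstacle; the convexity of $U$ is what makes the one-dimensional reduction available, and without it the conclusion would generally fail.
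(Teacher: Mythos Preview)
Your argument is correct: the creeping-along (continuity) method along the segment $[a,b]\subseteq U$ is the standard way to upgrade a uniform pointwise Lipschitz bound on a convex open set to a global one, and you handle the open-ball formulation of pointwise Lipschitzness cleanly via the $\eta\downarrow 0$ remark. Note that the paper does not actually prove this lemma; it is listed among ``standard results regarding Lipschitz mappings'' and stated without proof, so there is no paper argument to compare against---your proof simply supplies the omitted details.
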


Recall \cite[Section~4]{Csornyei} and \cite[Lemma~2.3]{MV} which introduce a result analogous to Lemma~\ref{pointwise Lip on convex open implies restriction is lipschitz} for co-Lipschitz mappings in the case $U=X=Y=\C$.

\begin{lemma}\label{everywhere local co-Lip}
Let $c>0$. If $f:(\C,\|\cdot\|)\to(\C,\vertiii{\cdot})$ is continuous and is pointwise $c$-co-Lipschitz at every $x\in \C$, then $f$ is (globally) $c$-co-Lipschitz. 
\end{lemma}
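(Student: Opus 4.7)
My plan is to run a standard connectedness argument that lifts the straight segment from $f(x_0)$ to the target value $y_0$ into the domain. Fix $x_0\in\C$, $r>0$ and $y_0\in\C$ with $d:=\vertiii{y_0-f(x_0)}<cr$; it suffices to produce $z\in B_r(x_0)$ with $f(z)=y_0$. Parameterise the segment as $\gamma(t):=f(x_0)+t(y_0-f(x_0))$ for $t\in[0,1]$ and, for each fixed margin $\epsilon\in(0,r-d/c)$, introduce
\[
T_\epsilon:=\bigl\{t\in[0,1]:\exists\,z\in\C\text{ with }f(z)=\gamma(t)\text{ and }\|z-x_0\|\leq td/c+\epsilon\bigr\}.
\]

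The overall strategy is to prove $T_\epsilon=[0,1]$ and then let $\epsilon\downarrow 0$. Non-emptiness is immediate ($0\in T_\epsilon$ with witness $x_0$), and closedness in $[0,1]$ is routine: any sequence $t_n\to t$ in $T_\epsilon$ has witnesses trapped in the compact ball $\overline{B}_{d/c+\epsilon}(x_0)$, so a subsequence provides a witness for $t$ by continuity of $f$. The decisive step is \emph{forward-openness}: for each $t\in T_\epsilon$ with $t<1$, some interval $[t,t+\delta)$ lies inside $T_\epsilon$. Combining non-emptiness, closedness and forward-openness, $\sup T_\epsilon$ is attained and, if strictly less than $1$, could be exceeded, forcing $1\in T_\epsilon$.

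For forward-openness I would first upgrade the pointwise $c$-co-Lipschitz hypothesis from its open-ball formulation to the closed-ball inclusion $\overline{B}_{cr_1}(f(z))\subseteq f(\overline{B}_{r_1}(z))$ valid for every $r_1$ less than some $r_0(z)>0$; this is a short compactness argument approximating a boundary point of $\overline{B}_{cr_1}(f(z))$ from inside. Given a witness $z$ for $t$, the critical choice $r_1:=(t'-t)d/c$ places $\gamma(t')$ on the sphere $\partial\overline{B}_{cr_1}(f(z))$, and the closed-ball inclusion furnishes some $z'$ with $f(z')=\gamma(t')$ and $\|z'-z\|\leq r_1$, whence
\[
\|z'-x_0\|\leq r_1+\|z-x_0\|\leq \tfrac{(t'-t)d}{c}+\tfrac{td}{c}+\epsilon=\tfrac{t'd}{c}+\epsilon,
\]
so $t'\in T_\epsilon$ for all $t'\in\bigl[t,\min(1,t+cr_0(z)/d)\bigr)$. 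I anticipate that the main subtlety is precisely this calibration of radii: the open-ball form of the hypothesis alone would leave the target inclusion and the source displacement in strict inequalities that are mutually incompatible at the critical value of $r_1$, so promoting the hypothesis to closed balls is what makes the estimate close.

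Once $1\in T_\epsilon$ for every small $\epsilon>0$, picking $\epsilon_n\downarrow 0$ supplies witnesses $z_n\in\overline{B}_{d/c+\epsilon_1}(x_0)$ with $f(z_n)=y_0$; a convergent subsequence yields $z^*$ with $f(z^*)=y_0$ and $\|z^*-x_0\|\leq d/c<r$, so $z^*\in B_r(x_0)$ and the inclusion $B_{cr}(f(x_0))\subseteq f(B_r(x_0))$ follows.
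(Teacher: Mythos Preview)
The paper does not prove this lemma itself; it simply records it and refers the reader to \cite[Section~4]{Csornyei} and \cite[Lemma~2.3]{MV}. So there is no in-paper argument to compare against. Your path-lifting argument is correct and is in fact the standard route taken in those references: one lifts the straight segment $\gamma(t)=f(x_0)+t(y_0-f(x_0))$ step by step using the local co-Lipschitz inclusion, controls the displacement of the lift by $td/c$, and closes up using compactness and continuity.

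Two very small points worth tidying. First, you should dispose of the trivial case $d=\vertiii{y_0-f(x_0)}=0$ at the outset, since your forward-openness step divides by $d$ when writing $r_1=(t'-t)d/c$ and the interval $[t,t+cr_0(z)/d)$. Second, your ``upgrade'' from the open-ball inclusion to $\overline{B}_{cr_1}(f(z))\subseteq f(\overline{B}_{r_1}(z))$ for $r_1<r_0(z)$ is exactly right and is the point that makes the chain of inequalities close with equality; it is worth spelling out in one line (for $y$ on the boundary, pick $r_1<s<r_0(z)$, obtain $x_s\in B_s(z)$ with $f(x_s)=y$, and pass to a convergent subsequence as $s\downarrow r_1$). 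With these cosmetic fixes the proof is complete.
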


Homeomorphisms between two metric spaces preserve pointwise co- and Lipschitzness of such mappings and their inverses in the following manner. 

\begin{lemma}\label{inverse is lipschitz}
Let $X$ and $Y$ be metric spaces, $h:X\to Y$ be a homeomorphism, $x_0\in X$ and $c>0$. Then $h$ is pointwise $c$-co-Lipschitz at $x_0$ if and only if $h^{-1}$ is pointwise $(1/c)$-Lipschitz at $h(x_0)$. 
\end{lemma}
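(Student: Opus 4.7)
The plan is to unpack both sides of the equivalence into explicit ball inclusions and observe that each is converted into the other by applying $h^{\pm 1}$ and rescaling the radius via $s=cr$. Explicitly, pointwise $c$-co-Lipschitzness of $h$ at $x_0$ is the existence of $r_0>0$ such that $B^Y_{cr}(h(x_0))\subseteq h(B^X_r(x_0))$ for each $r\in(0,r_0)$, while pointwise $(1/c)$-Lipschitzness of $h^{-1}$ at $h(x_0)$ is the existence of $s_0>0$ such that $h^{-1}(B^Y_s(h(x_0)))\subseteq B^X_{s/c}(x_0)$ for each $s\in(0,s_0)$, noting that $h^{-1}(h(x_0))=x_0$.

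For the forward direction I would set $s_0:=cr_0$. Given $s\in(0,s_0)$, the radius $r:=s/c$ lies in $(0,r_0)$, so the hypothesis yields $B^Y_s(h(x_0))=B^Y_{cr}(h(x_0))\subseteq h(B^X_r(x_0))$; applying $h^{-1}$ and invoking injectivity of $h$ converts this into $h^{-1}(B^Y_s(h(x_0)))\subseteq B^X_r(x_0)=B^X_{s/c}(x_0)$, which is the desired pointwise Lipschitz inclusion. The reverse direction is structurally identical: set $r_0:=s_0/c$, and for $r\in(0,r_0)$ take $s:=cr\in(0,s_0)$; the Lipschitz hypothesis gives $h^{-1}(B^Y_{cr}(h(x_0)))\subseteq B^X_r(x_0)$, and applying $h$ and using surjectivity recovers $B^Y_{cr}(h(x_0))=h(h^{-1}(B^Y_{cr}(h(x_0))))\subseteq h(B^X_r(x_0))$.

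There is essentially no obstacle beyond careful bookkeeping of strict inequalities on the radii through the rescaling $s=cr$; the only set-theoretic tools needed are the identities $h^{-1}(h(A))=A$ and $h(h^{-1}(B))=B$, which follow purely from bijectivity of $h$. The continuity of $h$ and $h^{-1}$ implicit in the homeomorphism hypothesis plays no role in the equivalence itself; it only ensures that $h^{-1}$ is a legitimate map between $Y$ and $X$ to which the pointwise Lipschitz definition applies in the first place.
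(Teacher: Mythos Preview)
Your proof is correct and follows essentially the same approach as the paper: both arguments apply $h^{-1}$ (respectively $h$) to the defining ball inclusion and use bijectivity to simplify $h^{-1}(h(A))=A$ (respectively $h(h^{-1}(B))=B$). Your version is slightly more explicit about the rescaling $s=cr$ and the choice of $s_0=cr_0$, whereas the paper leaves this implicit and dismisses the converse with ``follows similarly''.
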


\begin{proof}
If $h$ is pointwise $c$-co-Lipschitz at $x_0$ there exists $r_0>0$ such that $B_{cr}^Y(h(x_0))\subseteq h\left(B_r^X(x_0)\right)$ for each $r\in(0,r_0)$. Therefore
    \begin{equation*} 
        h^{-1}\left(B_{cr}^Y(h(x_0))\right)\subseteq h^{-1}\left(h\left(B_r^X(x_0)\right)\right)=B_r^X(x_0)= B_r^X\left(h^{-1}\left(h\left(x_0\right)\right)\right)
    \end{equation*} 
for each $r\in(0,r_0)$. Hence $h^{-1}$ is pointwise $(1/c)$-Lipschitz at $h(x_0)$. The reverse direction follows similarly. 
\end{proof}

The traditional examples of planar Lipschitz quotient mappings $f_n$, as defined in Lemma~\ref{archetypal case}, possess sharp constants, in the sense that the ratios of constants $c/L$ for such mappings are maximal, cf. \cite[Theorem~2]{OlgaPaper1}. 

\begin{lemma}\label{archetypal case}
For each $n\in\mathbb{N}$ define $f_{n}:(\C,|\cdot|)\to(\C,|\cdot|)$ to be given by $f_{n}(z)=|z|e^{in\arg(z)}$. Then $f_n$ is a Lipschitz quotient mapping; namely $f_n$ is $n$-Lipschitz and $1$-co-Lipschitz with respect to the Euclidean norm. 
\end{lemma}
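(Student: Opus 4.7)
The approach is to verify the two Lipschitz-quotient inclusions separately by direct computation in polar coordinates, using the fact that $f_n$ fixes the modulus and multiplies the principal argument by $n$.

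For the $n$-Lipschitz bound, I would take arbitrary $z = re^{i\theta}$ and $w = se^{i\phi}$ and compare
\begin{equation*}
|z - w|^2 = r^2 + s^2 - 2rs\cos(\theta - \phi), \qquad |f_n(z) - f_n(w)|^2 = r^2 + s^2 - 2rs\cos(n(\theta - \phi))
\end{equation*}
via the law of cosines. Setting $\alpha = \theta - \phi$, the desired estimate $|f_n(z)-f_n(w)|\leq n|z-w|$ is equivalent to $(n^2-1)(r^2+s^2) \geq 2rs(n^2\cos\alpha - \cos(n\alpha))$. Combining $r^2 + s^2 \geq 2rs$ with the classical inequality $|\sin(n\alpha/2)| \leq n|\sin(\alpha/2)|$ (an easy induction on $n$ via the sine addition formula) reduces this to $1 - \cos(n\alpha) \leq n^2(1 - \cos\alpha)$, which follows at once from the half-angle identity.

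For the $1$-co-Lipschitz bound, I would fix $z_0 \in \C$, $r > 0$ and $w \in B_r(f_n(z_0))$ and produce $z \in B_r(z_0)$ with $f_n(z) = w$. The degenerate cases where $z_0$ or $w$ is zero are immediate, since $|f_n(z)| = |z|$ for every $z$. Otherwise, writing $z_0 = r_0 e^{i\theta_0}$ and $w = s e^{i\psi}$ with $r_0, s > 0$, the full preimage set $f_n^{-1}(\{w\})$ consists of the $n$ equally spaced points on the circle of radius $s$ with arguments $(\psi + 2\pi k)/n$, $k = 0, 1, \dots, n - 1$. I would select the index $k$ minimising the angular distance to $\theta_0$, so that the resulting $z$ satisfies $\alpha := \arg(z) - \theta_0 \in [-\pi/n, \pi/n]$ modulo $2\pi$. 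Since then $|\alpha| \leq |n\alpha| \leq \pi$ and $\cos$ is decreasing on $[0, \pi]$, one has $\cos\alpha \geq \cos(n\alpha)$, whence by the law of cosines
\begin{equation*}
|z - z_0|^2 = s^2 + r_0^2 - 2sr_0 \cos\alpha \leq s^2 + r_0^2 - 2sr_0 \cos(n\alpha) = |w - f_n(z_0)|^2 < r^2.
\end{equation*}

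There is no serious obstacle; the only technical subtlety is the principal-argument convention (since $\arg$ is discontinuous across the negative real axis), but because $f_n(se^{i\gamma}) = se^{in\gamma}$ holds for every real $\gamma$ and the cosine quantities depend only on angles modulo $2\pi$, this bookkeeping never interferes with the inequalities above.
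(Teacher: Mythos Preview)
Your argument is correct. Note, however, that the paper does not actually supply a proof of this lemma: it is stated as a known fact about the ``traditional examples'' of planar Lipschitz quotient mappings, with a reference to \cite{OlgaPaper1} only for the sharpness of the ratio $c/L$. Your direct polar-coordinate computation---reducing the $n$-Lipschitz estimate to $|\sin(n\alpha/2)|\le n|\sin(\alpha/2)|$ via the half-angle identity, and the $1$-co-Lipschitz inclusion to the choice of the nearest of the $n$ equally spaced preimages together with the monotonicity of cosine on $[0,\pi]$---is the standard elementary verification, and the final remark about the principal-argument convention correctly disposes of the only bookkeeping subtlety.
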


\begin{remark}
We highlight here that in Corollary~\ref{standard is strongly co-Lipschitz}, which we prove later, we show that $f_n$ satisfy properties which are stronger than 1-co-Lipschitzness. 
\end{remark}

The following lemma concerns the Lipschitz property of variants of the standard Lipschitz quotient mappings $f_n$ introduced in Lemma~\ref{archetypal case}.

\begin{lemma}\label{k/n general function lipschitz}
Let $n\in\mathbb{N}$ and $k\in\{1,\dots,n-1\}$. For each $\varepsilon>0$ there exists $D=D\left(\varepsilon,k,n\right)>0$ such that $g_{k,n}:\C\setminus B_{D}(0)\to\C$ defined by $g_{k,n}(z)=|z|^{k/n}e^{ik\arg(z)}$ is $\varepsilon$-Lipschitz on $\C\setminus B_{D}(0)$. 
\end{lemma}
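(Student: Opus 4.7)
The plan is to bound $|g_{k,n}(z) - g_{k,n}(w)|$ directly by a two-step triangle-inequality decomposition into a radial piece and an angular piece, exploiting that the exponent $k/n - 1$ is strictly negative so that everything can be dominated by a multiple of $D^{k/n-1}$, which vanishes as $D\to\infty$.

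Without loss of generality take $|z|\leq |w|$. Inserting the intermediate quantity $|z|^{k/n}e^{ik\arg(w)}$ (crucially, with the \emph{smaller} modulus as the prefactor) and using $|e^{ik\arg(w)}|=1$, the triangle inequality gives
\begin{equation*}
|g_{k,n}(z) - g_{k,n}(w)| \leq |z|^{k/n}\left|e^{ik\arg(z)} - e^{ik\arg(w)}\right| + \left||z|^{k/n} - |w|^{k/n}\right|.
\end{equation*}

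For the radial piece, since $k/n\in(0,1)$ the map $t\mapsto t^{k/n}$ has derivative $(k/n)t^{k/n-1}$, which is at most $(k/n)D^{k/n-1}$ on $[D,\infty)$ because $k/n-1<0$; the mean value theorem combined with $\left||z|-|w|\right|\leq|z-w|$ bounds this piece by $(k/n)D^{k/n-1}|z-w|$. For the angular piece, the integrality of $k$ allows the geometric-series factorisation $u^k-v^k=(u-v)\sum_{j=0}^{k-1}u^j v^{k-1-j}$ applied to the unit complex numbers $u=z/|z|$, $v=w/|w|$ to yield $\left|e^{ik\arg(z)} - e^{ik\arg(w)}\right|\leq k\left|\tfrac{z}{|z|} - \tfrac{w}{|w|}\right|$, and an elementary triangle-inequality computation (splitting $z|w|-w|z|=z(|w|-|z|)+|z|(z-w)$) gives $\left|\tfrac{z}{|z|} - \tfrac{w}{|w|}\right|\leq 2|z-w|/|z|$. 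Combining, the angular piece is at most $2k|z|^{k/n-1}|z-w|\leq 2kD^{k/n-1}|z-w|$.

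Summing the two contributions yields $|g_{k,n}(z) - g_{k,n}(w)|\leq \left(k/n+2k\right)D^{k/n-1}|z-w|$, so choosing $D=D(\varepsilon,k,n)$ large enough that $\left(k/n+2k\right)D^{k/n-1}<\varepsilon$ (possible precisely because $k/n-1<0$) finishes the proof. There is no substantial obstacle once the decomposition is chosen correctly; the only subtlety is insisting on the \emph{smaller} modulus as the prefactor in the angular term, otherwise one would be left with an unbounded quantity like $|w|^{k/n}/|z|$ rather than the controllable $|z|^{k/n-1}\leq D^{k/n-1}$.
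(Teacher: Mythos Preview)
Your proof is correct. The overall strategy---a radial/angular split via an intermediate point, with both pieces controlled by a power $D^{k/n-1}$ that tends to zero---is the same as the paper's. The difference lies in how the angular piece is finished off. The paper does not order $|z_1|,|z_2|$; instead it factors out $|z_2|^{k/n-1}$ and then uses a second triangle-inequality step together with the already-established $k$-Lipschitz property of the archetypal map $f_k(z)=|z|e^{ik\arg(z)}$ (Lemma~\ref{archetypal case}) to bound $|z_2|^{k/n}\bigl|e^{ik\arg(z_1)}-e^{ik\arg(z_2)}\bigr|$. Your argument is more self-contained: by insisting on the \emph{smaller} modulus as the prefactor and invoking the elementary geometric-series bound $|u^k-v^k|\le k|u-v|$ for unit complex numbers, you avoid any appeal to the earlier lemma and obtain an explicit constant $(k/n+2k)$ in one stroke. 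The paper's route has the virtue of reusing a structural fact about the model maps $f_k$, while yours is cleaner as a stand-alone estimate; both lead to the same conclusion with essentially the same mechanism.
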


\begin{proof}
Fix $\varepsilon>0$. Define $f_k(z)=|z|e^{ik\arg(z)}$ for $z\in\C$ as in Lemma~\ref{archetypal case}. Further, define $h_k(t)=t^{k/n}$ for $t>0$. Let $T>0$ be such that $h_k$ is $(\varepsilon/2)$-Lipschitz on $[T,+\infty)$ and let $R>0$ be such that $\frac{k+1}{R^{1-k/n}}<\frac{\varepsilon}{2}$. Define $D:=\max\left\{T,R\right\}$. Fix $z_1,z_2\in \C\setminus B_{D}(0)$. Now
    \begin{equation}\label{splitting up of g_{k,n}}
        \begin{aligned}
        |g_{k,n}(z_1)&-g_{k,n}(z_2)|\leq\\
        &\left|g_{k,n}(z_1)-|z_2|^{k/n}e^{ik\arg(z_1)}\right|+\left|z_2\right|^{k/n}\left|e^{ik\arg(z_1)}-e^{ik\arg(z_2)}\right|.
        \end{aligned}
    \end{equation}
As $|z_1|,|z_2|\geq D\geq T$ and as $h_k$ is $(\varepsilon/2)$-Lipschitz on $[T,+\infty)$, 
    \begin{equation}\label{first term estimate for g_{k,n} lipschitz proof}
        \begin{aligned}
            \left|g_{k,n}(z_1)-|z_2|^{k/n}e^{ik\arg(z_1)}\right|=\left|h_k(|z_1|)-h_k(|z_2|)\right|&\leq \dfrac{\varepsilon}{2}\bigg||z_1|-|z_2|\bigg|
            \\
            &\leq \dfrac{\varepsilon}{2}\left|z_1-z_2\right|.
        \end{aligned}
    \end{equation}
Further, since $|z_2|\geq D\geq R$, 
        \begin{align*}
            &|z_2|^{k/n}\left|e^{ik\arg(z_1)}-e^{ik\arg(z_2)}\right|\\
            &\leq \left||z_2|^{k/n}-|z_1|\cdot|z_2|^{\frac{k}{n}-1}\right|+\left||z_1|\cdot|z_2|^{\frac{k}{n}-1}e^{ik\arg(z_1)}-|z_2|^{\frac{k}{n}}e^{ik\arg(z_2)}\right|\\
            &=\dfrac{1}{|z_2|^{1-\frac{k}{n}}}\bigg(\bigg||z_1|-|z_2|\bigg|+\left|f_k(z_1)-f_k(z_2)\right|\bigg)\leq \dfrac{\varepsilon}{2}\left|z_1-z_2\right|,
        \end{align*}
where the last inequality follows by the choice of $R>0$ and Lemma~\ref{archetypal case}. Substituting this and \eqref{first term estimate for g_{k,n} lipschitz proof} into \eqref{splitting up of g_{k,n}} we obtain 
    \begin{equation*} 
        \left|g_{k,n}(z_1)-g_{k,n}(z_2)\right|\leq \varepsilon |z_1-z_2|.
\end{equation*} 
By the arbitrariness of $z_1,z_2\in \C\setminus B_{D}(0)$ we conclude the required Lipschitzness of $g_{k,n}$. 
\end{proof}

We now introduce a quick lemma regarding the composition of pointwise co-Lipschitz functions.

\begin{lemma}\label{composition of locally co-lip is locally co-lip}
Suppose $X$, $Y$ and $Z$ are metric spaces and $f:X\to Y$, $g:Y\to Z$ are functions. Suppose $f$ is pointwise $a$-co-Lipschitz at $x\in X$ and $g$ is pointwise $b$-co-Lipschitz at $f(x)\in Y$ for some constants $a,b>0$. Then $g\circ f$ is pointwise $\left(ab\right)$-co-Lipschitz at $x$. 
\end{lemma}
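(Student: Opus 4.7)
The plan is to unpack both pointwise co-Lipschitz hypotheses, chain the ball inclusions, and read off the correct radius threshold.

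First, I would write out what each hypothesis provides. Pointwise $a$-co-Lipschitzness of $f$ at $x$ yields an $r_0>0$ such that $B^Y_{as}(f(x))\subseteq f(B^X_s(x))$ for every $s\in(0,r_0)$. Pointwise $b$-co-Lipschitzness of $g$ at $f(x)$ yields an $r_1>0$ such that $B^Z_{bt}(g(f(x)))\subseteq g(B^Y_t(f(x)))$ for every $t\in(0,r_1)$. My goal is to produce a radius threshold $r_2>0$ so that the analogous inclusion holds for $g\circ f$ with constant $ab$ on the scale $r\in(0,r_2)$.

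Next, I would set $r_2=\min\{r_0,\,r_1/a\}$ and, for an arbitrary $r\in(0,r_2)$, chase an arbitrary point $z\in B^Z_{abr}(g(f(x)))$ back to $X$ in two steps. The first step applies the $g$-inclusion with $t:=ar<r_1$: since $z\in B^Z_{b(ar)}(g(f(x)))=B^Z_{bt}(g(f(x)))$, there exists $y\in B^Y_t(f(x))=B^Y_{ar}(f(x))$ with $g(y)=z$. The second step applies the $f$-inclusion with $s:=r<r_0$: since $y\in B^Y_{ar}(f(x))=B^Y_{as}(f(x))$, there exists $x'\in B^X_r(x)$ with $f(x')=y$, hence $(g\circ f)(x')=z$. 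This shows $B^Z_{abr}((g\circ f)(x))\subseteq (g\circ f)(B^X_r(x))$ for all $r\in(0,r_2)$, which is precisely pointwise $(ab)$-co-Lipschitzness at $x$.

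There is essentially no obstacle here; the only thing to watch is the radius bookkeeping: the threshold for the outer map $g$ must be rescaled by $1/a$ because the intermediate ball $B^Y_{ar}(f(x))$ enters the $g$-hypothesis in the role of radius $t$, so one needs $ar<r_1$ rather than $r<r_1$. Everything else is a direct substitution.
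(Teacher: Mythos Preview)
Your proof is correct and follows essentially the same approach as the paper: both unpack the two co-Lipschitz hypotheses, set the threshold $\min\{r_0,\,r_1/a\}$, and chain the inclusions $B^Z_{abr}((g\circ f)(x))\subseteq g(B^Y_{ar}(f(x)))\subseteq (g\circ f)(B^X_r(x))$. The only cosmetic difference is that you phrase the argument as an element-chase while the paper writes the inclusion chain directly.
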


\begin{proof}
As $f$ is pointwise $a$-co-Lipschitz at $x\in X$, there exists $\rho_f>0$ such that $f(B_r^X(x))\supseteq B_{ar}^Y(f(x))$ for each $r\in(0,\rho_f)$. Similarly, there exists $\rho_g>0$ such that $g(B_r^Y(f(x)))\supseteq B_{br}^Z(g(f(x)))$ for each $r\in (0,\rho_g)$. Define $\rho:=\min(\rho_f,\rho_g/a)$. Then, for each $r\in(0,\rho)$, 
    \begin{equation*}
        (g\circ f)\left(B_r^X(x)\right)\supseteq g\left(B_{ar}^Y(f(x))\right)\supseteq B_{abr}^Z\left((g\circ f)(x)\right).
    \end{equation*}
Hence, $g\circ f$ is pointwise $(ab)$-co-Lipschitz at $x\in X$.
\end{proof}

The next lemma provides a sufficient property for a mapping between metric spaces to be pointwise co-Lipschitz at a given point. To be able to conveniently refer to this property, we first give the following definition. 

\begin{definition}\label{definition for strongly co-Lipschitz}
Suppose $(X,d_X)$ and $(Y,d_Y)$ are metric spaces and $c>0$. We say a function $f:X\to Y$ is \textit{strongly $c$-co-Lipschitz} at $x_0\in X$ if there exists $\rho>0$ such that:
    \begin{enumerate}[label=(\roman*)]
        \item \label{interior condition for strongly definition}$f(x_0)\in\text{Int}\left(f\left(B_{\rho}^X(x_0)\right)\right)$;
        \item \label{inequality for strongly} $d_Y(f(x),f(x_0))\geq cd_X(x,x_0)$ for all $x\in B_{\rho}^X(x_0)$.
    \end{enumerate}
If we do not need to specify $c$, we shall simply write $f$ is strongly co-Lipschitz at $x_0$. 
\end{definition}

\begin{lemma}\label{nicer co-Lipschitz condition for non-open maps}
Let $(X,d_X)$ and $(Y,d_Y)$ be metric spaces and $c>0$. If $f:X\to Y$ is strongly $c$-co-Lipschitz at $x_0\in X$, then $f$ is pointwise $c$-co-Lipschitz at $x_0$. 
\end{lemma}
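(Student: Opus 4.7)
The plan is to unpack both definitions and show that each of the two conditions appearing in the strong co-Lipschitz definition contributes exactly what is needed for pointwise co-Lipschitzness. Condition (ii) is a lower bound on how $f$ separates points near $x_0$; condition (i) is an openness-at-$f(x_0)$ statement that ensures the image of a small ball around $x_0$ actually covers a neighbourhood of $f(x_0)$. Combining them gives surjectivity onto a small ball with a quantitatively controlled preimage.

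First I would fix $\rho>0$ given by Definition~\ref{definition for strongly co-Lipschitz} and apply condition \ref{interior condition for strongly definition} to choose $\delta>0$ such that
\begin{equation*}
  B_\delta^Y(f(x_0)) \subseteq f\bigl(B_\rho^X(x_0)\bigr).
\end{equation*}
I then set $r_0 := \min(\rho, \delta/c)$ and fix any $r \in (0, r_0)$. The aim is to verify $B_{cr}^Y(f(x_0)) \subseteq f(B_r^X(x_0))$, which is exactly pointwise $c$-co-Lipschitzness at $x_0$.

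To do so, I would take an arbitrary $y \in B_{cr}^Y(f(x_0))$. Since $cr \le c r_0 \le \delta$, the point $y$ lies in $B_\delta^Y(f(x_0))$, so by the choice of $\delta$ there exists $x \in B_\rho^X(x_0)$ with $f(x) = y$. Now I would apply condition \ref{inequality for strongly} of Definition~\ref{definition for strongly co-Lipschitz} to this $x$:
\begin{equation*}
  c\, d_X(x, x_0) \le d_Y(f(x), f(x_0)) = d_Y(y, f(x_0)) < cr,
\end{equation*}
which forces $d_X(x, x_0) < r$, i.e.\ $x \in B_r^X(x_0)$, and hence $y = f(x) \in f(B_r^X(x_0))$.

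There is no real obstacle here: the proof is essentially an unwrapping argument, with the only slightly delicate point being the need to shrink $\rho$ to $r_0 = \min(\rho, \delta/c)$ so that the preimage produced by condition (i) is automatically forced into $B_r^X(x_0)$ by the lower bound (ii). I would expect the proof to take only a few lines.
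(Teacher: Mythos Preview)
Your proof is correct and follows essentially the same approach as the paper's own argument: extract a radius $\delta$ (the paper calls it $R$) from condition~(i), shrink to $r_0 \le \delta/c$, and then use condition~(ii) to pull the preimage of any $y \in B_{cr}^Y(f(x_0))$ into $B_r^X(x_0)$. The only cosmetic difference is that the paper takes $r_0 = R/(2c)$ with $R<\rho$, whereas you take $r_0=\min(\rho,\delta/c)$; including $\rho$ in the minimum is harmless but unnecessary, since the preimage $x$ already lies in $B_\rho^X(x_0)$ by construction and that is all condition~(ii) requires.
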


\begin{proof}
Let $\rho>0$ be as in Definition~\ref{definition for strongly co-Lipschitz}. By property~\ref{interior condition for strongly definition} of Definition~\ref{definition for strongly co-Lipschitz} there exists a positive constant $R<\rho$ such that 
    \begin{equation}\label{interior condition for the fixed point in the pointwise co-Lip lemma}
        B^Y_R\left(f(x_0)\right)\subseteq \text{Int}\left(f\left(B^X_{\rho}(x_0)\right)\right)\subseteq f\left(B^X_{\rho}(x_0)\right).
    \end{equation}
Define $r:=\frac{R}{2c}>0$, let $0<s<r$ and fix $y\in B^Y_{c s}\left(f(x_0)\right)$. By the choice of $r$, note $cs<cr<R$. Thus \eqref{interior condition for the fixed point in the pointwise co-Lip lemma} implies $y\in f\left(B^X_{\rho}(x_0)\right)$. Hence there exists $x\in B^X_{\rho}(x_0)$ such that $y=f(x)$. We claim $x\in B^X_s(x_0)$ follows by \ref{inequality for strongly} of Definition~\ref{definition for strongly co-Lipschitz}. Indeed, since $x\in B^X_{\rho}(x_0)$ and $y\in B^Y_{c s}(f(x_0))$,
    \begin{equation*} 
        cs>d_Y(y,f(x_0))=d_Y(f(x),f(x_0))\geq c d_X(x,x_0),
    \end{equation*}
so $x\in B^X_s(x_0)$. Therefore $y=f(x)\in f\left(B^X_s(x_0)\right)$ and since $y\in B^Y_{c s}\left(f(x_0)\right)$ was arbitrary we deduce 
$B^Y_{c s}\left(f(x_0)\right)\subseteq f\left(B^X_s(x_0)\right)$. Finally, since $s\in (0,r)$ was arbitrary we conclude $f$ is pointwise $c$-co-Lipschitz at $x_0$. 
\end{proof}

\begin{corollary}\label{nicer co-lip condition for open maps}
Let $(X,d_X)$, $(Y,d_Y)$ be metric spaces. Suppose $f:X\to Y$ is an open map, $x_0\in X$ and there exist positive constants $c$ and $r_0$ such that $d_Y(f(x),f(x_0))\geq cd_X(x,x_0)$ for each $x\in B^X_{r_0}(x_0)$. Then $f$ is pointwise $c$-co-Lipschitz at $x_0$. 
\end{corollary}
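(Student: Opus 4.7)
The plan is to deduce this directly from Lemma~\ref{nicer co-Lipschitz condition for non-open maps} by verifying that the hypotheses of strong $c$-co-Lipschitzness at $x_0$ are met with radius $\rho = r_0$. There are only two things to check in Definition~\ref{definition for strongly co-Lipschitz}: the interior condition \ref{interior condition for strongly definition} on $f(x_0)$ and the metric lower bound \ref{inequality for strongly} on $B_\rho^X(x_0)$.

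First I would observe that condition \ref{inequality for strongly} is simply one of the hypotheses: we are told $d_Y(f(x),f(x_0))\geq c\,d_X(x,x_0)$ for all $x\in B_{r_0}^X(x_0)$. So this part needs no work beyond setting $\rho := r_0$.

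Next, the interior condition is exactly where openness of $f$ enters. Since $B_{r_0}^X(x_0)$ is open in $X$ and $f$ is an open map, $f(B_{r_0}^X(x_0))$ is open in $Y$. In particular $f(x_0)\in f(B_{r_0}^X(x_0)) = \text{Int}(f(B_{r_0}^X(x_0)))$, so condition \ref{interior condition for strongly definition} of Definition~\ref{definition for strongly co-Lipschitz} holds with the same $\rho = r_0$.

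Having verified both properties, $f$ is strongly $c$-co-Lipschitz at $x_0$, and Lemma~\ref{nicer co-Lipschitz condition for non-open maps} immediately concludes that $f$ is pointwise $c$-co-Lipschitz at $x_0$. There is no real obstacle here — the role of the corollary is to point out that whenever $f$ is open, the rather intrinsic-looking interior condition in the definition of strong co-Lipschitzness is automatic, so only the pointwise distance inequality needs to be checked in practice.
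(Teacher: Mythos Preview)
Your proof is correct and is exactly the argument the paper intends: the corollary is stated without proof precisely because openness of $f$ makes condition~\ref{interior condition for strongly definition} of Definition~\ref{definition for strongly co-Lipschitz} automatic, so Lemma~\ref{nicer co-Lipschitz condition for non-open maps} applies immediately with $\rho=r_0$.
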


\begin{remark}\label{remark: strongly co-Lip in open subsets}
When proving pointwise or strong co-Lipschitzness of mappings defined in Section~\ref{construction of Lip quo}, we will often consider $X$ to be an open subset of $\C$. In such cases, instead of $B_r^X(x)$, we will consider balls centred at $x\in X$ and open in the Euclidean metric. To be able to use the definition of a co-Lipschitz mapping or Definition~\ref{definition for strongly co-Lipschitz} and subsequent results about strongly co-Lipschitz mappings, it is enough to ensure $r$ is sufficiently small so that the Euclidean ball of radius $r$ around $x$ coincides with $B_r^X(x)$. 
\end{remark}

\begin{remark}
Using the notion introduced in Definition~\ref{definition for strongly co-Lipschitz}, the following implication follows by Lemma~\ref{nicer co-Lipschitz condition for non-open maps}:
    \begin{equation}\label{strongly co-Lip implies pointwise}
        \text{strongly $c$-co-Lipschitz at }x_0\implies \text{pointwise $c$-co-Lipschitz at }x_0.
    \end{equation}
\end{remark}

One may naturally ask the question of whether a reverse implication holds. In Lemma~\ref{pointwise implies strongly sufficient to show the inequality} below, we show that only property~\ref{inequality for strongly} of Definition~\ref{definition for strongly co-Lipschitz} needs to be verified for a pointwise co-Lipschitz mapping to be strongly co-Lipschitz. 

\begin{lemma}\label{pointwise implies strongly sufficient to show the inequality}
Let $(X,d_X)$ and $(Y,d_Y)$ be metric spaces, $f:X\to Y$, $x_0\in X$ and $c>0$. Suppose $f$ is pointwise $c$-co-Lipschitz at $x_0$. If there exists $\rho_0>0$ such that $d_Y(f(x_0),f(z))\geq cd_X(x_0,z)$ for each $z\in B_{\rho_0}^X(x_0)$, then $f$ is strongly $c$-co-Lipschitz at $x_0$. 
\end{lemma}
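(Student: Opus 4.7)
The plan is to show directly that the $\rho$ witnessing strong $c$-co-Lipschitzness can be chosen as the minimum of $\rho_0$ and the radius coming from the pointwise co-Lipschitz hypothesis.

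By hypothesis, $f$ is pointwise $c$-co-Lipschitz at $x_0$, so there exists $r_0 > 0$ such that $B_{cr}^Y(f(x_0)) \subseteq f(B_r^X(x_0))$ for every $r \in (0, r_0)$. Set $\rho := \tfrac{1}{2}\min(\rho_0, r_0)$. I claim this $\rho$ satisfies both conditions of Definition~\ref{definition for strongly co-Lipschitz}.

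For property~\ref{inequality for strongly}, note that $\rho \leq \rho_0$, so the hypothesis $d_Y(f(x_0),f(z)) \geq c d_X(x_0,z)$ for all $z \in B_{\rho_0}^X(x_0)$ specialises immediately to $B_\rho^X(x_0)$.

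For property~\ref{interior condition for strongly definition}, pick any $r \in (0, \rho)$; then $r < r_0$ and $r < \rho$. The pointwise co-Lipschitz inclusion yields
\begin{equation*}
B_{cr}^Y(f(x_0)) \subseteq f\bigl(B_r^X(x_0)\bigr) \subseteq f\bigl(B_\rho^X(x_0)\bigr).
\end{equation*}
Since $B_{cr}^Y(f(x_0))$ is an open subset of $Y$ containing $f(x_0)$, this shows $f(x_0) \in \text{Int}(f(B_\rho^X(x_0)))$, completing the verification.

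There is no real obstacle here: the content of the lemma is essentially bookkeeping, combining the open-neighbourhood inclusion supplied by pointwise co-Lipschitzness with the pointwise inequality assumption on a possibly smaller ball. The only care needed is to ensure the chosen $\rho$ is small enough both to lie inside the co-Lipschitz radius $r_0$ (so that property~\ref{interior condition for strongly definition} follows) and inside $\rho_0$ (so that property~\ref{inequality for strongly} is inherited).
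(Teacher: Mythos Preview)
Your proof is correct and follows essentially the same approach as the paper: both set $\rho=\tfrac{1}{2}\min(\rho_0,r_0)$, inherit property~\ref{inequality for strongly} from $\rho\leq\rho_0$, and obtain property~\ref{interior condition for strongly definition} from the pointwise co-Lipschitz inclusion. The only cosmetic difference is that the paper applies the inclusion directly with $r=\rho$ (which is valid since $\rho<r_0$), whereas you introduce an auxiliary $r\in(0,\rho)$; this extra step is harmless but unnecessary.
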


\begin{proof}
It is enough to prove (i) of Definition~\ref{definition for strongly co-Lipschitz} is satisfied for some $0<\rho<\rho_0$. Indeed, as $f$ is pointwise $c$-co-Lipschitz at $x_0$, there exists a positive $r_0$ such that $f(B_r^X(x_0))\supseteq B_{cr}^Y(f(x_0))$ for each $r\in(0,r_0)$. Define $\rho:=\frac{1}{2}\min(r_0,\rho_0)$. Then $f(x_0)\in B_{c\rho}^Y(f(x_0))\subseteq f(B_{\rho}^X(x_0))$. Hence as $B_{c\rho}^Y(f(x_0))$ is open, we deduce (i) is satisfied. Thus $f$ is strongly $c$-co-Lipschitz at $x_0$. 
\end{proof}

The reverse implication of \eqref{strongly co-Lip implies pointwise} can easily be seen in the case when the function is locally injective, as we show in the following lemma. 

\begin{lemma}\label{pointwise implies strongly}
Let $(X,d_X)$, $(Y,d_Y)$ be metric spaces, $x_0\in X$ and $c>0$. Suppose a mapping $f:X\to Y$ is both pointwise $c$-co-Lipschitz and locally injective at $x_0$. Then $f$ is strongly $c$-co-Lipschitz at $x_0$. 
\end{lemma}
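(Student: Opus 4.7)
The plan is to invoke Lemma~\ref{pointwise implies strongly sufficient to show the inequality}, so that we only need to verify the distance inequality $d_Y(f(x_0),f(z))\geq c\,d_X(x_0,z)$ on some ball around $x_0$. From the hypotheses, fix $r_0>0$ so that $B_{cr}^Y(f(x_0))\subseteq f(B_r^X(x_0))$ for every $r\in(0,r_0)$, and fix $r_1>0$ such that $f$ is injective on $B_{r_1}^X(x_0)$. Set $\rho_0:=\min(r_0,r_1)$.

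The verification will be by contradiction. Suppose there exists $z\in B_{\rho_0}^X(x_0)\setminus\{x_0\}$ with $t:=d_Y(f(x_0),f(z))<c\,d_X(x_0,z)=:cs$. Then $t/c<s\leq \rho_0\leq r_0$, so I can choose $r\in(t/c,s)\subseteq(0,r_0)$. Because $f(z)\in B_{cr}^Y(f(x_0))\subseteq f(B_r^X(x_0))$, there is a point $z'\in B_r^X(x_0)$ with $f(z')=f(z)$. Since $d_X(x_0,z')<r<s=d_X(x_0,z)$, we have $z'\neq z$. On the other hand both $z,z'$ lie in $B_{\rho_0}^X(x_0)\subseteq B_{r_1}^X(x_0)$, on which $f$ is injective; this forces $z=z'$, a contradiction. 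Hence the required inequality holds for every $z\in B_{\rho_0}^X(x_0)$, and Lemma~\ref{pointwise implies strongly sufficient to show the inequality} delivers strong $c$-co-Lipschitzness at $x_0$.

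There is no real obstacle here; the argument is a short combinatorial use of the two hypotheses. The only mild care needed is in the bookkeeping of radii: one must take $\rho_0$ small enough for both the co-Lipschitz preimage covering and the local injectivity to apply to the competing preimage $z'$, and one must pick the auxiliary radius $r$ strictly between $t/c$ and $s$ so that $z'$ is forced into a strictly smaller ball than $z$, thereby distinguishing the two preimages.
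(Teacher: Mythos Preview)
Your proof is correct and follows essentially the same route as the paper's: both invoke Lemma~\ref{pointwise implies strongly sufficient to show the inequality} and derive a contradiction by using the co-Lipschitz inclusion to produce a second preimage inside the ball of injectivity. The only cosmetic differences are that the paper sets $r=d_X(x,x_0)$ directly (rather than choosing $r\in(t/c,s)$) and takes $\rho=\tfrac12\min(r_0,r_1)$ rather than $\min(r_0,r_1)$, neither of which changes the substance.
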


\begin{proof}
Since $f$ is pointwise $c$-co-Lipschitz at $x_0$, by definition, there exists $r_0>0$ such that
    \begin{equation} \label{co-Lipschitz condition in the proof of equivalence}
        B^Y_{cr}\left(f(x_0)\right)\subseteq f\left(B^X_r(x_0)\right) \quad\text{for each }r\in (0,r_0).
    \end{equation}
Since $f$ is locally injective at $x_0$ there exists $r_1>0$ such that $\restr{f}{B^X_{r_1}(x_0)}$ is injective. Define $\rho:=\frac{1}{2}\min\left(r_0,r_1\right)$. Recall Lemma~\ref{pointwise implies strongly sufficient to show the inequality}. Thus it suffices to show
    \begin{equation} \label{strongly co-Lipschitz inequality in the proof of equivalence}
        d_Y(f(x),f(x_0))\geq cd_X(x,x_0)\quad\text{for all }x\in B_\rho^X(x_0).
    \end{equation}
This is trivially satisfied for $x=x_0$. Suppose, for a contradiction, that \eqref{strongly co-Lipschitz inequality in the proof of equivalence} is not satisfied, i.e. there exists $x\in B_\rho^X(x_0)\setminus\{x_0\}$ such that $d_Y(f(x),f(x_0))<cd_X(x,x_0)$. Define $r:=d_X(x,x_0)$, so $0<r<\rho<r_0$. Hence, $f(x)\in B_{cr}^Y(f(x_0))\subseteq f\left(B_r^X(x_0)\right)$ where the inclusion follows by \eqref{co-Lipschitz condition in the proof of equivalence}. Therefore, as $\restr{f}{B_{\rho}^X(x_0)}$ is injective, $x\in B_{\rho}^X(x_0)$ and $B_r^X(x_0)\subseteq B_{\rho}^X(x_0)$, it follows $x\in B_r^X(x_0)$ and so $r=d_X(x,x_0)<r$, providing contradiction. Hence \eqref{strongly co-Lipschitz inequality in the proof of equivalence} is satisfied.
\end{proof}

\begin{corollary}\label{equivalence of strongly co-Lipschitz and pointwise co-Lipschitz}
Suppose $X$ and $Y$ are metric spaces, $f:X\to Y$ is a mapping which is locally injective at $x_0\in X$ and $c>0$. Then 
    \begin{equation*}
        f\text{ is strongly }c\text{-co-Lipschitz at }x_0\text{ } \Longleftrightarrow \text{ }f\text{ is pointwise }c\text{-co-Lipschitz at }x_0.
    \end{equation*}
\end{corollary}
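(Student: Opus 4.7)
This corollary is an immediate consequence of the two preceding lemmas together with the implication already labelled \eqref{strongly co-Lip implies pointwise}. The plan is simply to observe that the two directions of the biconditional are each already a theorem in the excerpt, under exactly the hypotheses now being assumed.

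For the forward direction, I would invoke Lemma~\ref{nicer co-Lipschitz condition for non-open maps} (equivalently, the statement \eqref{strongly co-Lip implies pointwise}), which gives without any injectivity hypothesis that strongly $c$-co-Lipschitz at $x_0$ implies pointwise $c$-co-Lipschitz at $x_0$. Note that the local injectivity assumption is not used here; it is only needed for the other direction.

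For the reverse direction, I would apply Lemma~\ref{pointwise implies strongly}, whose hypotheses are exactly that $f$ is pointwise $c$-co-Lipschitz at $x_0$ and locally injective at $x_0$, and whose conclusion is that $f$ is strongly $c$-co-Lipschitz at $x_0$.

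There is no real obstacle: the content of the corollary is precisely the conjunction of Lemma~\ref{nicer co-Lipschitz condition for non-open maps} and Lemma~\ref{pointwise implies strongly}, with the role of local injectivity being to promote pointwise $c$-co-Lipschitzness up to the inequality $d_Y(f(x),f(x_0))\ge c\,d_X(x,x_0)$ on a neighbourhood of $x_0$ (as carried out inside the proof of Lemma~\ref{pointwise implies strongly} via the injectivity/contradiction argument). The proof therefore reduces to a one-line citation of these two previous results.
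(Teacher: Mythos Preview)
Your proposal is correct and matches the paper's approach exactly: the corollary is stated without proof in the paper, as it follows immediately from the combination of Lemma~\ref{nicer co-Lipschitz condition for non-open maps} (equivalently \eqref{strongly co-Lip implies pointwise}) for the forward direction and Lemma~\ref{pointwise implies strongly} for the reverse direction, precisely as you describe.
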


\begin{remark}
We highlight the relevance of Corollary~\ref{equivalence of strongly co-Lipschitz and pointwise co-Lipschitz} in the context of mappings with the inherent structure of planar Lipschitz quotient mappings. Indeed Proposition~\ref{reworded cristina} identifies at which points of the plane a composition $P\circ h$ of a polynomial $P$ and a homeomorphism $h$ is locally injective, hence where the notions of strongly co-Lipschitzness and pointwise co-Lipschitzness agree. In Corollary~\ref{Lip quo is strongly co-Lipschitz} below, we show that these two notions automatically agree everywhere for any Lipschitz quotient mapping. However, as mentioned in Section~\ref{introduction}, not all mappings with this underlying structure $P\circ h$ are Lipschitz quotient.
\end{remark}

Further, we are able to show the equivalence between the two notions of pointwise co-Lipschitz and strongly co-Lipschitz for discrete co-Lipschitz mappings. To see this we follow the method presented in \cite[p. 2091]{BLD}. Let us first recall the definition of a discrete mapping. 

\begin{definition}
Let $X,Y$ be topological spaces and $S\subseteq X$. We say:
\begin{itemize} 
    \item $S$ is a \textit{discrete set} if for each $x\in S$ there exists a neighbourhood $U$ of $x$ such that $U\cap S=\left\{x\right\}$;
    \item  $f:X\to Y$ is a \textit{discrete mapping} if $f^{-1}\left(y\right)$ is a discrete set for each $y\in Y$.
\end{itemize}
\end{definition}

\begin{lemma}\label{mapping discrete then pointwise and strongly are equivalent}
Suppose $(X,d_X)$, $(Y,d_Y)$ are metric spaces and $f:X\to Y$ is a discrete $c$-co-Lipschitz mapping for some $c>0$. Then $f$ is strongly $c$-co-Lipschitz at each $x\in X$. 
\end{lemma}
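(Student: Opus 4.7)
The plan is to verify the two conditions of Definition~\ref{definition for strongly co-Lipschitz} at an arbitrary $x_0 \in X$, choosing the radius $\rho$ small relative to a discreteness radius. Using that $f^{-1}(f(x_0))$ is discrete and contains $x_0$, I would first extract $\rho_0>0$ with $B^X_{\rho_0}(x_0) \cap f^{-1}(f(x_0)) = \{x_0\}$, and then take $\rho$ to be any positive number strictly less than $\rho_0/2$; for definiteness, set $\rho := \rho_0/3$.

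Condition~\ref{interior condition for strongly definition} is immediate from global co-Lipschitzness: $B^Y_{c\rho}(f(x_0)) \subseteq f(B^X_{\rho}(x_0))$, and since $B^Y_{c\rho}(f(x_0))$ is an open set containing $f(x_0)$, we conclude that $f(x_0) \in \text{Int}(f(B^X_{\rho}(x_0)))$.

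For condition~\ref{inequality for strongly}, I would argue by contradiction. Suppose there exists $x_1 \in B^X_{\rho}(x_0)$ with $d_Y(f(x_1),f(x_0)) < c\,d_X(x_1,x_0)$, and write $\delta = d_Y(f(x_1),f(x_0))$ and $r_1 = d_X(x_1,x_0)$. A short preliminary step using the isolation at $x_0$ rules out $\delta = 0$: otherwise $x_1 \in f^{-1}(f(x_0)) \cap B^X_{\rho}(x_0) = \{x_0\}$, forcing $r_1 = 0$ and invalidating the strict inequality. Hence $0 < \delta/c < r_1 < \rho < \rho_0/2$.

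The crux of the argument, and the step I expect to be the main obstacle, is choosing where to invoke the co-Lipschitz property. The natural attempt of applying it at $x_0$ to find closer preimages of $f(x_1)$ does not directly yield a contradiction, since the extra preimage produced need not be $x_1$ itself. Instead I would apply co-Lipschitzness \emph{at $x_1$}: for any $r > \delta/c$, the point $f(x_0)$ lies in $B^Y_{cr}(f(x_1))$, and hence there exists $w \in B^X_r(x_1)$ with $f(w) = f(x_0)$. The triangle inequality combined with $r_1 < \rho_0/2$ forces $d_X(w,x_0) < r + r_1 < \rho_0$ once $r$ is taken sufficiently close to $\delta/c$, and then isolation of $x_0$ gives $w = x_0$. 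Therefore $d_X(x_0, x_1) < r$ for every $r > \delta/c$, producing $d_X(x_1,x_0) \leq \delta/c$ and contradicting $r_1 > \delta/c$. Notably, this argument nowhere requires continuity of $f$; the co-Lipschitz property at $x_1$ together with the discreteness radius at $x_0$ is enough.
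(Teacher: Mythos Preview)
Your proof is correct and follows essentially the same idea as the paper's: both arguments apply the co-Lipschitz inclusion at the \emph{other} point (your $x_1$, the paper's $z$) and use the discreteness radius around $x_0$ to force the resulting preimage of $f(x_0)$ to be $x_0$ itself. The paper presents this directly---taking $r=d_X(z,x)$, observing $B_r^X(z)$ misses $f^{-1}(f(x))$, and concluding $f(x)\notin B_{cr}^Y(f(z))$---whereas you reach the same conclusion by contradiction and a limiting argument in $r$; the paper also delegates condition~\ref{interior condition for strongly definition} to Lemma~\ref{pointwise implies strongly sufficient to show the inequality} rather than verifying it explicitly, but these are cosmetic differences.
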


\begin{proof}
Fix $x\in X$ and define $\mathcal{A}_x=f^{-1}\left(f(x)\right)$. Since $f$ is a discrete mapping there exists $r_0>0$ such that $B\left(x,2r_0\right)\cap\mathcal{A}_x=\left\{x\right\}$. Fix $z\in  B^X_{r_0}(x)\setminus\{x\}$ and let $r:=d_X(z,x)$. Then $B^X_r(z)\cap \mathcal{A}_x=\emptyset$, so $f(x)\not\in f\left(B^X_r(z)\right)$. Since $f$ is $c$-co-Lipschitz, $f\left(B^X_r(z)\right)\supseteq B^Y_{cr}\left(f(z)\right)$. As $f(x)\not\in f(B_r^X(z))$ this implies $d_Y(f(x),f(z))\geq cr=cd_X(x,z)$. 

Observe that $d_Y(f(x),f(z))\geq cd_X(x,z)$ is trivially satisfied when $z=x$. Therefore, by Lemma~\ref{pointwise implies strongly sufficient to show the inequality}, we conclude $f$ is strongly $c$-co-Lipschitz at $x$. 
\end{proof}

We highlight that Lemma~\ref{pointwise implies strongly} and Lemma~\ref{mapping discrete then pointwise and strongly are equivalent} are the strongest possible, in the sense that there exist Lipschitz quotient mappings which are $1$-co-Lipschitz but not locally injective, not discrete and are not strongly co-Lipschitz at any point. We show this in the following example. 

\begin{example}\label{discretness of co-Lipschitz mapping is important}
Let $n,k\geq 1$ be integers and $f:\R^{n+k}\to \R^n$ be the standard projection, where both spaces are equipped with the Euclidean norm. Then $f$ is $1$-Lipschitz and $1$-co-Lipschitz. This trivially follows since $f\left(B_r\left(x\right)\right)=B_r\left(f(x)\right)$ for each $r>0$ and $x\in\R^{n+k}$. Further, it is clear that $f$ is not discrete. Moreover, $f$ is neither injective nor strongly $c$-co-Lipschitz, for any $c>0$, at any $x_0\in\R^{n+k}$ as $f^{-1}(x_0)$ is a $k$-dimensional hyperplane. 
\end{example}

Using Lemma~\ref{mapping discrete then pointwise and strongly are equivalent}, we deduce the following two corollaries. First we show that planar Lipschitz quotient mappings, or any continuous co-Lipschitz planar mappings, are necessarily strongly co-Lipschitz at every point.

\begin{corollary}\label{Lip quo is strongly co-Lipschitz}
Suppose $f:\C\to\C$ is a continuous $c$-co-Lipschitz mapping for some $c>0$. Then $f$ is strongly $c$-co-Lipschitz at each $x\in \C$.
\end{corollary}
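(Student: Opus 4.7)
The plan is to reduce the corollary to Lemma~\ref{mapping discrete then pointwise and strongly are equivalent}: since $f$ is already $c$-co-Lipschitz by hypothesis, it suffices to prove that $f$ is a discrete mapping, after which that lemma directly yields strong $c$-co-Lipschitzness at every point of $\C$.

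To establish discreteness, I fix $y_0\in\C$ and any $x_0\in f^{-1}(y_0)$, and assume for contradiction that $x_0$ is an accumulation point of $f^{-1}(y_0)$; so there is a sequence $x_n\to x_0$ with $x_n\in f^{-1}(y_0)\setminus\{x_0\}$. First, observe that $f$ is automatically an open mapping: given any open $U\ni x$ and a ball $B_r(x)\subseteq U$, the $c$-co-Lipschitz condition yields $B_{cr}(f(x))\subseteq f(B_r(x))\subseteq f(U)$. Next I would run a planar Brouwer-degree argument. Since $f^{-1}(y_0)$ is closed, the set of small radii $\rho$ with $\partial B_\rho(x_0)\cap f^{-1}(y_0)=\emptyset$ is comeager in some interval $(0,\varepsilon)$; pick one such $\rho$, so that $d:=\deg(f-y_0,B_\rho(x_0),0)$ is a well-defined finite integer. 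On the other hand, openness together with $c$-co-Lipschitzness force each isolated preimage of $y_0$ in $B_\rho(x_0)$ to contribute a nonzero local degree of the same sign (this being the planar orientation-preservation phenomenon for continuous open maps $\C\to\C$). Since $x_n\to x_0$ forces infinitely many of the $x_n$ to lie inside $B_\rho(x_0)$, the additivity of Brouwer degree makes $|d|$ unbounded, contradicting its finiteness.

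The main obstacle is making the local-degree argument rigorous in the merely continuous setting, and in particular treating the degenerate sub-case where some of the preimages $x_n$ are themselves not isolated in $f^{-1}(y_0)$. In that sub-case $f^{-1}(y_0)$ would contain a nontrivial continuum through $x_0$, which can be excluded by a separate argument: openness plus $c$-co-Lipschitz applied at each point of such a continuum produces overlapping preimages of a common small ball around $y_0$ along an uncountable set, which is then ruled out by the same planar topological input. This is fundamentally a two-dimensional phenomenon, unavailable in $\mathbb{R}^n$ for $n\geq 3$, where the analogous discreteness assertion is precisely the long-standing conjecture recalled in Section~\ref{introduction}.
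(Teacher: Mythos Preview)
Your overall strategy coincides with the paper's: reduce to Lemma~\ref{mapping discrete then pointwise and strongly are equivalent} by showing that a continuous $c$-co-Lipschitz map $f:\C\to\C$ is discrete. The paper, however, does not attempt to prove discreteness from scratch; it simply invokes \cite[Proposition~4.3]{BJLPS} (equivalently \cite[Proposition~2.1]{JLPS}), where this planar fact is already established, and then applies Lemma~\ref{mapping discrete then pointwise and strongly are equivalent}. That is the entire proof.

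Your attempt to reproduce the discreteness result via a Brouwer-degree argument has genuine gaps, several of which you yourself flag. The claim that the set of radii $\rho$ with $\partial B_\rho(x_0)\cap f^{-1}(y_0)=\emptyset$ is comeager in some $(0,\varepsilon)$ is not a consequence of $f^{-1}(y_0)$ being closed; if the fibre contains a continuum through $x_0$ (precisely the ``degenerate sub-case'' you postpone), every small circle meets it, and the degree is never defined. Your fallback argument for that sub-case --- ``overlapping preimages of a common small ball around $y_0$ along an uncountable set'' --- is not an argument: openness and co-Lipschitzness do not by themselves rule out a continuum fibre without further topological input, and you have not supplied it. Likewise, the assertion that all isolated preimages contribute local degrees of the \emph{same sign} is exactly the nontrivial planar phenomenon at stake; for a merely continuous map this requires real work (it is essentially what the cited results in \cite{BJLPS,JLPS} prove), and you have asserted it rather than proved it. In short, you are trying to re-derive the content of the cited proposition inside the corollary; the clean route is to cite it, as the paper does.
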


\begin{proof}
By \cite[Proposition~4.3]{BJLPS}, or equivalently \cite[Proposition~2.1]{JLPS}, $f$ is discrete and so Lemma~\ref{mapping discrete then pointwise and strongly are equivalent} yields the result. 
\end{proof}

\begin{corollary}\label{standard is strongly co-Lipschitz}
For every $n\in\mathbb{N}$ let the function $f_n:\C\to\C$ be defined by $f_n(z)=|z|e^{in\arg(z)}$ as in Lemma~\ref{archetypal case}. Then $f_n$ is strongly 1-co-Lipschitz at each $z\in\C$. 
\end{corollary}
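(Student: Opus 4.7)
The plan is to obtain this as an immediate consequence of two earlier results in the excerpt. By Lemma~\ref{archetypal case}, $f_n$ is a Lipschitz quotient mapping which is $n$-Lipschitz and $1$-co-Lipschitz. In particular, $f_n$ is continuous and $1$-co-Lipschitz. Therefore Corollary~\ref{Lip quo is strongly co-Lipschitz}, applied with $c=1$, directly yields that $f_n$ is strongly $1$-co-Lipschitz at every $z \in \C$.

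So the proof is essentially a one-line invocation, and there is no real obstacle. The substantive work has already been done upstream: the pointwise co-Lipschitz inclusion $B_r(f_n(z)) \subseteq f_n(B_r(z))$ comes from Lemma~\ref{archetypal case}, the passage from pointwise $c$-co-Lipschitz to strongly $c$-co-Lipschitz relies on discreteness of continuous co-Lipschitz planar maps (Lemma~\ref{mapping discrete then pointwise and strongly are equivalent} combined with \cite[Proposition~4.3]{BJLPS}), and continuity of $f_n$ is clear from its definition. The only thing worth double-checking is that the domain and codomain norms in Definition~\ref{definition for strongly co-Lipschitz} match those considered in Lemma~\ref{archetypal case}; since both statements use the Euclidean norm $|\cdot|$ on $\C$, no compatibility issue arises.

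If one wanted to avoid routing through discreteness, one could instead argue directly: $f_n$ is an open continuous map, so for any $z_0 \in \C$ one has $f_n(z_0) \in \operatorname{Int}(f_n(B_\rho(z_0)))$ for every $\rho > 0$, which gives condition~\ref{interior condition for strongly definition} of Definition~\ref{definition for strongly co-Lipschitz} for free, and condition~\ref{inequality for strongly} with $c=1$ follows from the elementary estimate $|f_n(z) - f_n(z_0)| \geq |z - z_0|$ valid on a sufficiently small neighbourhood of $z_0$ (which is exactly the pointwise lower bound underlying $1$-co-Lipschitzness of $f_n$). But the cleaner route is simply to cite Lemma~\ref{archetypal case} and Corollary~\ref{Lip quo is strongly co-Lipschitz}.
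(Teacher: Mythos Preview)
Your proposal is correct and matches the paper's approach exactly: the corollary is stated immediately after Corollary~\ref{Lip quo is strongly co-Lipschitz} with no separate proof, so the intended argument is precisely to combine Lemma~\ref{archetypal case} (giving continuity and $1$-co-Lipschitzness of $f_n$) with Corollary~\ref{Lip quo is strongly co-Lipschitz}. One minor quibble with your alternative route: the inequality $|f_n(z)-f_n(z_0)|\geq |z-z_0|$ on a neighbourhood is not literally the definition of $1$-co-Lipschitzness but rather its consequence via discreteness (Lemma~\ref{mapping discrete then pointwise and strongly are equivalent}), so that alternative is not really independent of the main route.
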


Following Corollary~\ref{Lip quo is strongly co-Lipschitz} one may ask the following question. 

\begin{question}\label{question regarding existence of discrete lip quo which is not strongly}
For $n\geq 3$ do there exist Lipschitz quotient mappings $f:\R^n\to \R^n$ which are not strongly co-Lipschitz at some $x_0\in\R^n$?
\end{question}

We highlight the logical equivalence between Question~\ref{question regarding existence of discrete lip quo which is not strongly} and a long-standing conjecture from \cite[p.~1096]{BJLPS}. Namely: 

\begin{conjecture}\label{conjecture regarding discreteness of Lip quo}
Suppose $n\geq 3$ and $f:\R^n\to\R^n$ is a Lipschitz quotient mapping. Then $f$ is a discrete mapping. 
\end{conjecture}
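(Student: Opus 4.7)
The plan is to argue by contradiction via a blow-up scheme. Assume $f:\R^n\to\R^n$ is $L$-Lipschitz and $c$-co-Lipschitz, but some fibre $f^{-1}(y_0)$ fails to be discrete at $x_0\in f^{-1}(y_0)$. Then there is a sequence of distinct points $x_k\to x_0$ with $f(x_k)=y_0$. Set $r_k:=\|x_k-x_0\|\to 0$ and $v_k:=(x_k-x_0)/r_k$; after extracting a subsequence, $v_k\to v\in S^{n-1}$. The rescalings $g_k(x):=(f(x_0+r_kx)-y_0)/r_k$ are again $L$-Lipschitz and $c$-co-Lipschitz, and satisfy $g_k(0)=g_k(v_k)=0$. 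By Arzel\`a--Ascoli a further subsequence converges locally uniformly to a Lipschitz quotient mapping $g:\R^n\to\R^n$ with the same constants and $g(0)=g(v)=0$. The target is to show that this limiting object already contradicts the finite-dimensional Lipschitz quotient structure.

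The natural next step is to leverage the equivalence discussed after Question~\ref{question regarding existence of discrete lip quo which is not strongly}: discreteness of $f$ is equivalent to strong $c$-co-Lipschitzness of $f$ at every point, which by Lemma~\ref{pointwise implies strongly sufficient to show the inequality} reduces to proving, for every $x_0\in\R^n$, the quantitative inequality
    \begin{equation*}
        \|f(x)-f(x_0)\|\geq c\|x-x_0\|\quad\text{for all }x\in B_{\rho}(x_0)
    \end{equation*}
on some small ball around $x_0$. Under this inequality, $g(v)=g(0)$ with $\|v\|=1$ is immediately impossible and the contradiction is complete. To establish the inequality I would combine the global $c$-co-Lipschitz condition with the openness of $f$ (cf.\ the $\R^n$ analogue of Lemma~\ref{everywhere local co-Lip}) and iterate the above blow-up at points of $g^{-1}(0)$: each iteration would manufacture Lipschitz quotient limits with ever richer zero fibres, and one would hope to contradict a local-degree argument, since openness forces the local index at each point of the domain to be a non-zero integer, whereas the presence of a continuum in a fibre of a limit of open maps with uniform constants should drive this index to zero.

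The main obstacle — and the reason Conjecture~\ref{conjecture regarding discreteness of Lip quo} has remained open — is precisely carrying out this degree-theoretic step in the absence of an analogue of Theorem~\ref{Lipschitz quotient decomposes into a complex polynomial and a homeomorphim}. In dimension $n=2$, the decomposition $f=P\circ h$ yields $f^{-1}(y_0)=h^{-1}(P^{-1}(y_0))$, a discrete set of at most $\deg P$ points, and pointwise strong co-Lipschitzness follows by Corollary~\ref{Lip quo is strongly co-Lipschitz}. For $n\geq 3$ no such algebraic description is available, and Csörnyei's construction \cite{Csornyei} of a Lipschitz quotient $\R^3\to\R^2$ whose fibre contains a $2$-dimensional plane shows that any successful argument must genuinely exploit the equality of domain and codomain dimension. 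A concrete degree- or index-theoretic tool robust enough to survive the passage to blow-up limits within the class of open $L$-Lipschitz, $c$-co-Lipschitz self-maps of $\R^n$ is, to the best of my knowledge, not yet available, and developing it is the hard part of the proof.
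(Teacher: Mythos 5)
The statement you are asked to prove is stated in the paper as Conjecture~\ref{conjecture regarding discreteness of Lip quo}, a long-standing open problem from \cite{BJLPS}; the paper offers no proof of it, only the observation (via Lemma~\ref{mapping discrete then pointwise and strongly are equivalent} and Lemma~\ref{strongly co-Lip on fibre implies fibre is discrete}) that it is logically equivalent to every Lipschitz quotient self-map of $\R^n$ being strongly co-Lipschitz everywhere. Your proposal does not close this gap, and you say so yourself in the final paragraph; what you have written is a strategy outline whose central step is missing.

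Two concrete problems. First, the blow-up limit $g$ with $g(0)=g(v)=0$ and $\|v\|=1$ is not by itself contradictory: Lipschitz quotient mappings routinely have non-singleton fibres (already in the plane, $f_2(z)=|z|e^{2i\arg z}$ satisfies $f_2(1)=f_2(-1)$), so producing a limit map with two zeros a unit distance apart proves nothing. The contradiction you then invoke requires the inequality $\|f(x)-f(x_0)\|\geq c\|x-x_0\|$ on a small ball, i.e.\ strong co-Lipschitzness at $x_0$ --- but by the paper's own equivalence this inequality (at every point) is the conjecture, so assuming it is circular; and if you had it at the particular $x_0$ where the fibre accumulates, the points $x_k$ themselves already give a contradiction with no blow-up needed. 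Second, the degree-theoretic mechanism you hope for (``the local index at each point is a non-zero integer'') presupposes that the map is discrete, or at least requires the Chernavskii--V\"ais\"al\"a-type machinery for open maps that is precisely what is unavailable here; openness of $f$ follows immediately from co-Lipschitzness, but openness alone does not yield a well-defined nonzero local index, and continua in fibres of open maps are not excluded by any soft argument. In short, the proposal correctly identifies why the problem is hard but does not constitute a proof, and no proof should be expected from the paper either.
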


First we note that a positive answer to Conjecture~\ref{conjecture regarding discreteness of Lip quo} implies, via an application of Lemma~\ref{mapping discrete then pointwise and strongly are equivalent}, that every Lipschitz quotient mapping $f:\R^n\to \R^n$, $n\geq 3$ is strongly $c$-co-Lipschitz everywhere, where $c=\text{co-Lip}(f)$, providing a negative answer to Question~\ref{question regarding existence of discrete lip quo which is not strongly}. 

Conversely a negative answer to Question~\ref{question regarding existence of discrete lip quo which is not strongly}, i.e. every Lipschitz quotient mapping $f:\R^n\to\R^n$ is strongly co-Lipschitz everywhere, implies Conjecture~\ref{conjecture regarding discreteness of Lip quo}. This implication is proved in the following simple lemma.

\begin{lemma}\label{strongly co-Lip on fibre implies fibre is discrete}
Let $\left(X,d_X\right)$, $\left(Y,d_Y\right)$ be metric spaces and $y\in Y$. If $f:X\to Y$ is strongly co-Lipschitz at each element of $f^{-1}(y)$, then $f^{-1}(y)$ is a discrete set. 

In particular, if $f$ is strongly co-Lipschitz at every $x\in X$, then $f$ is a discrete mapping. 
\end{lemma}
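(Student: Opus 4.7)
The plan is to use property (ii) of Definition~\ref{definition for strongly co-Lipschitz} directly: at any point $x_0 \in f^{-1}(y)$, the strong co-Lipschitz inequality forces $f$ to separate $x_0$ from nearby points in the fibre, which is exactly what discreteness of the fibre requires.

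More precisely, I would fix $x_0 \in f^{-1}(y)$ and let $c>0$ be such that $f$ is strongly $c$-co-Lipschitz at $x_0$, with associated radius $\rho>0$ as in Definition~\ref{definition for strongly co-Lipschitz}. I would then suppose, for contradiction, that some $x \in B_\rho^X(x_0) \setminus \{x_0\}$ also lies in $f^{-1}(y)$. Since $f(x)=y=f(x_0)$, property~\ref{inequality for strongly} of Definition~\ref{definition for strongly co-Lipschitz} gives
\begin{equation*}
0 = d_Y(f(x),f(x_0)) \geq c\, d_X(x,x_0) > 0,
\end{equation*}
which is a contradiction. Hence $B_\rho^X(x_0) \cap f^{-1}(y) = \{x_0\}$, so $x_0$ is an isolated point of $f^{-1}(y)$. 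Since $x_0$ was an arbitrary element of $f^{-1}(y)$, this set is discrete. Note that property~\ref{interior condition for strongly definition} of Definition~\ref{definition for strongly co-Lipschitz} is not even needed here — only the pointwise lower bound on distances matters.

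For the ``in particular'' statement, I would simply observe that if $f$ is strongly co-Lipschitz at every $x \in X$, then for any $y \in Y$ the hypothesis of the first part is satisfied for the fibre $f^{-1}(y)$. Applying the first part to each $y\in Y$ shows every fibre is discrete, which is precisely the definition of $f$ being a discrete mapping. There is no real obstacle in this proof; the whole content is the immediate observation that an inequality of the form $d_Y(f(x),f(x_0)) \geq c\, d_X(x,x_0)$ makes it impossible to have two distinct points of $f^{-1}(y)$ within the corresponding neighbourhood.
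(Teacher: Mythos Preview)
Your proof is correct and follows essentially the same approach as the paper: both fix a point $x_0\in f^{-1}(y)$, invoke property~\ref{inequality for strongly} of Definition~\ref{definition for strongly co-Lipschitz} to obtain $0=d_Y(f(x),f(x_0))\geq c\,d_X(x,x_0)$ for any $x\in B_\rho^X(x_0)\cap f^{-1}(y)$, and conclude $x=x_0$. The only cosmetic difference is that you phrase this as a contradiction while the paper argues directly; your observation that property~\ref{interior condition for strongly definition} is not needed is also correct.
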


\begin{proof}
To show $f^{-1}(y)$ is discrete we require to show for each $x\in f^{-1}(y)$ that there exists a neighbourhood $U_x$ of $x$ such that $U_x\cap f^{-1}(y)=\left\{x\right\}$. Fix $x\in f^{-1}(y)$. Since $f$ is strongly co-Lipschitz at $x$, there exist positive constants $c_x$ and $\rho_x$ such that 
    \begin{equation}\label{equation in lemma for strongly on fibre implies fibre is discrete}
        d_Y\left(f(w),f(x)\right)\geq c_x d_X\left(w,x\right)\quad\text{for each }w\in B_{\rho_x}^X(x). 
    \end{equation}
Define $U_x:=B_{\rho_x}^X(x)$. Let $z\in U_x\cap f^{-1}(y)$. Then since $z\in B_{\rho_x}^X(x)$ and $f(z)=y$, by \eqref{equation in lemma for strongly on fibre implies fibre is discrete} it follows that $0=d_Y(f(z),f(x))\geq c_x d_X(z,x)$. Thus $z=x$ since $c_x>0$ and so $U_x\cap f^{-1}(y)=\left\{x\right\}$. Since $x\in f^{-1}(y)$ was arbitrary, we conclude $f^{-1}(y)$ is a discrete set. 
\end{proof}

With the introduction of the notion of strong co-Lipschitzness, we are in a position to answer Question~\ref{question: negative converse} affirmatively. Formally, we prove the following. 

\begin{lemma}\label{corr about fixed homeo, not lip qup}
Let $h:\C\to\C$ be a homeomorphism. Then there exists a complex polynomial $P$ in one complex variable such that $P\circ h$ is not Lipschitz quotient. 
\end{lemma}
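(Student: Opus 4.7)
The plan is to split into cases according to whether the given homeomorphism $h$ is bi-Lipschitz, and to exhibit a different polynomial $P$ in each case. The key preliminary observation is a global analogue of Lemma~\ref{inverse is lipschitz}: for a homeomorphism $h:\C\to\C$, being $c$-co-Lipschitz is equivalent to $h^{-1}$ being $(1/c)$-Lipschitz. This follows directly from the defining inclusion $B_{cr}(h(x))\subseteq h(B_r(x))$ by applying $h^{-1}$ and substituting $y=h(x)$. Consequently, a homeomorphism $h:\C\to\C$ is a Lipschitz quotient mapping if and only if $h$ is bi-Lipschitz.

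In the first case, when $h$ is not bi-Lipschitz, I would simply take $P(z)=z$; then $P\circ h=h$ is not a Lipschitz quotient mapping by the equivalence above, and we are done.

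In the second case, when $h$ is bi-Lipschitz with constants $c,L>0$, I would take $P(z)=z^{2}$ and show that $P\circ h$ fails to be Lipschitz (and hence cannot be Lipschitz quotient). Using the factorisation
\[
P\circ h(z)-P\circ h(0)=\bigl(h(z)-h(0)\bigr)\bigl(h(z)+h(0)\bigr),
\]
the bi-Lipschitz lower bound gives $|h(z)-h(0)|\geq c|z|$, while the reverse triangle inequality yields $|h(z)|\geq c|z|-|h(0)|\to+\infty$, so that $|h(z)+h(0)|\to+\infty$ as $|z|\to+\infty$. Consequently $|P\circ h(z)-P\circ h(0)|/|z|\to+\infty$, which contradicts any Lipschitz bound on $P\circ h$.

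I do not anticipate a serious obstacle: once the bi-Lipschitz dichotomy for homeomorphisms of $\C$ is in place, the argument is essentially bookkeeping. The only mildly technical ingredient is the asymptotic lower bound for $|h(z)+h(0)|$ in the bi-Lipschitz case, which is immediate from the reverse triangle inequality combined with the co-Lipschitz lower bound on $|h(z)|$.
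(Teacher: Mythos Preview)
Your proof is correct and follows the same dichotomy as the paper: take $P(z)=z$ when $h$ is not a Lipschitz quotient mapping, and $P(z)=z^{2}$ when it is. The paper handles the second case via Lemma~\ref{square is not a Lip quo}, the more general statement that squaring \emph{any} planar Lipschitz quotient mapping yields a non-Lipschitz map; that proof invokes the strongly co-Lipschitz property (Corollary~\ref{Lip quo is strongly co-Lipschitz}) together with a growth estimate from~\cite{MV}. You instead exploit that a Lipschitz quotient homeomorphism is automatically bi-Lipschitz, so the factorisation $(h(z)-h(0))(h(z)+h(0))$ and the lower bi-Lipschitz bound give the blow-up of the difference quotient by a direct reverse-triangle computation. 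The paper's route yields a stronger standalone lemma applicable to non-injective Lipschitz quotient maps; your route is more elementary and self-contained for the purposes of this particular statement.
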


Naturally Lemma~\ref{corr about fixed homeo, not lip qup} is a consequence that squaring Lipschitz quotient mappings of the plane never produces a Lipschitz mapping, also. We prove this in the following lemma.

\begin{lemma}\label{square is not a Lip quo}
Suppose $f:\C\to\C$ is a Lipschitz quotient mapping. Then $g(z)=(f(z))^2$ is not Lipschitz. 
\end{lemma}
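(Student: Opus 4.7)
The plan is to argue by contradiction: suppose $g(z) = (f(z))^2$ is $M$-Lipschitz for some $M > 0$ and derive a contradiction with the co-Lipschitz constant of $f$. The key observation is the factorization $f(z)^2 - f(w)^2 = (f(z)-f(w))(f(z)+f(w))$, which allows a Lipschitz bound on $g$ to transfer, on any region where $|f|$ is bounded below, to a Lipschitz bound on $f$ with a \emph{smaller} constant. Since co-Lipschitzness forces $f$ to be unbounded, this transferred Lipschitz constant can be made arbitrarily small, eventually beating $\mathrm{co\text{-}Lip}(f)$.

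In more detail, set $c := \mathrm{co\text{-}Lip}(f) > 0$. First I would note that $f$ is unbounded: the $c$-co-Lipschitz inclusion $B_{cr}(f(0)) \subseteq f(B_r(0))$, applied with $r\to+\infty$, shows that $f(\mathbb{C})$ contains arbitrarily large Euclidean balls around $f(0)$. Hence for every $R > 0$ there exists $z_0 \in \mathbb{C}$ with $|f(z_0)| > R$; the exact value of $R$ will be specified at the very end.

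By continuity of $f$, I would next pick $\rho > 0$ small enough that $|f(z) - f(z_0)| < |f(z_0)|/2$ for every $z \in B_\rho(z_0)$. For any pair $z, w \in B_\rho(z_0)$ the triangle inequality then gives
\[
|f(z) + f(w)| \geq 2|f(z_0)| - |f(z) - f(z_0)| - |f(w) - f(z_0)| > |f(z_0)|,
\]
and combining this with $|g(z) - g(w)| \leq M|z-w|$ and the above factorization yields
\[
|f(z) - f(w)| \leq \frac{M}{|f(z_0)|}|z - w|,
\]
so that $f$ restricted to $B_\rho(z_0)$ is $(M/|f(z_0)|)$-Lipschitz.

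To conclude, I would pit this against the co-Lipschitz property at $z_0$: for each $r \in (0,\rho)$, co-Lipschitzness gives $B_{cr}(f(z_0)) \subseteq f(B_r(z_0))$, while the Lipschitz bound just derived gives $f(B_r(z_0)) \subseteq B_{(M/|f(z_0)|) r}(f(z_0))$. Comparing radii forces $c \leq M/|f(z_0)|$, i.e.\ $|f(z_0)| \leq M/c$. Choosing the initial $R$ to exceed $M/c$ at the outset then contradicts the choice of $z_0$. The only non-routine ingredient is the factorization estimate that turns the squaring into a small Lipschitz constant for $f$; once the neighbourhood is chosen so that $f(z)+f(w)$ cannot nearly cancel, the rest is bookkeeping.
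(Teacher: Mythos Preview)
Your argument is correct, and it takes a genuinely different route from the paper's own proof. Both proofs argue by contradiction and exploit the factorization $f(z)^2-f(w)^2=(f(z)-f(w))(f(z)+f(w))$, but they use the two factors in opposite ways. The paper first invokes Corollary~\ref{Lip quo is strongly co-Lipschitz} (which rests on discreteness of planar co-Lipschitz maps) to bound $|f(z_0)-f(w)|$ from \emph{below} by $c_f|z_0-w|$, and then the assumed Lipschitz bound on $g$ forces $|f(z_0)+f(w)|\le L_g/c_f$, contradicting a growth estimate for $|f|$ taken from \cite{MV}. You instead use mere continuity of $f$ to bound $|f(z)+f(w)|$ from \emph{below} near $z_0$, turn the Lipschitz bound on $g$ into a local Lipschitz bound $M/|f(z_0)|$ for $f$, and then compare that directly with the global co-Lipschitz constant $c$ via the ball inclusions. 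Your route is more self-contained: it needs neither the strongly co-Lipschitz machinery developed earlier in the paper nor the external growth result from \cite{MV}, only the definitions and the surjectivity of $f$ (which is immediate from co-Lipschitzness). The paper's version, on the other hand, illustrates the utility of the strongly co-Lipschitz notion that is one of the paper's themes.
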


\begin{proof}
Suppose $f$ is $c_f$-co-Lipschitz and $L_f$-Lipschitz and, for a contradiction, suppose $g$ is Lipschitz. Let us assume, without loss of generality, that both $g/f$ are Lipschitz/Lipschitz quotient with respect to the Euclidean norm. Now \cite[Theorem~2.8~(1)]{MV} provides the existence of a positive constant $R$ such that 
    \begin{equation} \label{lower bound for f olga's result}
        \left|f(x)\right|\geq c_f\left(\left|x\right|-M\right),
    \end{equation}
whenever $|x|>R$. Here $M:=\max\left\{|z|:f(z)=0\right\}$. Let $L_g>0$ be such that $g$ is $L_g$-Lipschitz and fix $z_0\in \C$ such that $|z_0|> R+M+L_g/(2c_f^2)$. Since $f$ is strongly $c_f$-co-Lipschitz at $z_0$, by Corollary~\ref{Lip quo is strongly co-Lipschitz}, there exists $r_0\in (0,1)$ such that $\left|f(z_0)-f(w)\right|\geq c_f\left|z_0-w\right|$ for all $w\in B_{r_0}(z_0)$. As $g$ is $L_g$-Lipschitz, 
    \begin{align*}
        c_f\left|z_0-w\right|\left|f(z_0)+f(w)\right|\leq\left|\left(f(z_0)\right)^2-\left(f(w)\right)^2\right|&=\left|g(z_0)-g(w)\right|\\
        &\leq L_g\left|z_0-w\right|,
    \end{align*}
for all $w\in B_{r_0}(z_0)$. Hence, for any $w\in B_{r_0}(z_0)\setminus \left\{z_0\right\}$, $\left|f(z_0)+f(w)\right|\leq L_g/c_f$. Thus, by the continuity of $f$, $|f(z_0)|\leq L_g/(2c_f)$. However, by our choice of $z_0$ and \eqref{lower bound for f olga's result}, $\left|f(z_0)\right|> L_g/(2c_f)$, providing contradiction. Hence $g$ is not Lipschitz. 
\end{proof}

\section{Construction of the Lipschitz quotient mapping}\label{construction of Lip quo}
Recall the function $h:\C\to \C$ given in \cite[Proposition~2.9]{JLPS} (for some large $R>0$): 
    \begin{equation}\label{function given by JLPS}
        h(z)=\begin{cases}
                z,\quad&\text{if $|z|\leq R$},\\
                \left(\dfrac{2R-|z|}{R}|z|+\dfrac{|z|-R}{R}|z|^{1/n}\right)e^{i\text{arg}(z)},\quad&\text{if $R\leq |z|\leq 2R$},\\
                |z|^{1/n}e^{i\text{arg}(z)},\quad&\text{if $|z|\geq 2R$}.
             \end{cases}
    \end{equation}
The authors of \cite{JLPS} claim first this is a homeomorphism from $\C$ to itself and go on to provide a sketch for a proof of Proposition~\ref{main result}. However it is clear that $h$ is not injective by observing that, for $R>2^{1/(n-1)}$ if $n>1$, the curve $\partial B_{2R}(0)$ is mapped under $h$ inside the open ball $B_R(0)$ where the mapping remains fixed. Further, the authors introduce an amendment to the function $h$ which may further provide points at which $h$ is not injective. They describe how to change the function $h$ defined by \eqref{function given by JLPS} on a finite collection of open balls. However they neglect the fact the prescribed radii of these balls are potentially very small and hence will require a `scaling' to ensure the function is necessarily injective, as indicated by the $r^{1-(1/m_j)}$ term in \eqref{h_2 equation}. Finally, the authors state the co-Lipschitzness of the function $h$ outside of the union of these balls, but do not verify the co-Lipschitzness on their boundaries, which is intricate.

Below we give a correct construction, for a fixed polynomial $P$, of a homeomorphism $h$ of the plane to itself such that 
$P\circ h$ is a Lipschitz quotient mapping. The proof of Proposition~\ref{main result} will be split into many claims, which verify the pointwise co- and Lipschitz property of the required functions, and remarks, which utilise earlier lemmata to conclude co- and Lipschitzness on specific regions. To highlight the end of the proof of a claim we use the symbol $\diamondsuit$, whereas the end of the proof of the proposition is highlighted by the usual $\square$.

\begin{proof}[Proof of Proposition~\ref{main result}]
Fix $n\in\mathbb{N}$. We may assume without loss of generality that $P$ is a monic polynomial of degree $n$. Indeed if $P$ is not monic, let $a\neq 0$ denote the leading coefficient of $P$. One can apply the present Proposition to the monic polynomial $Q:=P/a$ to find the homeomorphism $h$ such that $f(z)=(Q\circ h)(z)$ is a Lipschitz quotient mapping. Then $(P\circ h)(z)=af(z)$ is a Lipschitz quotient mapping.

Therefore, assume $P(z)=z^n+a_{n-1}z^{n-1}+\dots+a_1z+a_0$. If $n=1$ define $h(z):=z$ and then $f(z)=\left(P\circ h\right)(z)=z+a_0$ is $1$-co-Lipschitz and $1$-Lipschitz.

Suppose $n\geq 2$. The structure of the proof is as follows: we begin by defining a homeomorphism $h_1$ of the plane, let $F_1=P\circ h_1$ and show that $F_1$ is Lipschitz on $\C$ and pointwise co-Lipschitz on $\C$ with the exception of a small neighbourhood $W$ of finitely many points. Namely, $W$ contains a neighbourhood of the set of roots of the polynomial $P'$, the derivative of $P$. We use this to show $F_1$ is strongly co-Lipschitz at each $z\in\C\setminus V$, where $V\supseteq W$. We then proceed by defining an amended homeomorphism $h_2$ which coincides with $h_1$ everywhere outside of $V$, define the new function $F_2=P\circ h_2$ and prove $F_2$ is pointwise co- and Lipschitz at the remaining points. Let us introduce some notation which will be important in the construction.

\begin{notation}\label{D_k notation and choice of R}
    If $a_k\neq 0$ let $D_k=D\left(1/(2n|a_k|),k,n\right)$ be provided by Lemma~\ref{k/n general function lipschitz}, such that $g_{k,n}(z)=|z|^{k/n}e^{ik\arg(z)}$ is $1/(2n|a_k|)$-Lipschitz on $\mathbb{R}^2\setminus B_{D_k}(0)$; otherwise if $a_k=0$, let $D_k=0$.
\end{notation}

Let $R>1$ be such that
        \begin{enumerate}[label=(\alph*)]
            \item \label{roots of P' inside ball of radius R} the roots of the derivative $P'$ lie inside the open ball of radius $R$ centred at the origin;
            \item \label{R is sufficiently large concerning lipschitz} $R\geq \max\left\{D_k:0\leq k\leq n-1\right\}$.
        \end{enumerate} 
Define $h_1:\C\to\C$ by
        \begin{equation*}    
            h_1(z)=\phi(|z|)e^{i\arg(z)},
        \end{equation*} 
    where 
        \begin{equation*} 
            \phi(t)=\begin{cases}
                    t^{1/n},\quad&\text{if $t\geq 2^nR^n$};\\
                    \left(\dfrac{t-R}{2^nR^{n-1}-1}+R\right),\quad&\text{if $R\leq t\leq 2^nR^n$};\\
                    t,\quad&\text{if $0\leq t\leq R$}.
                \end{cases}
    \end{equation*}
Since $\phi:[0,+\infty)\to[0,+\infty)$ is a continuous, piecewise $C^{\infty}$ strictly increasing function, $h_1$ is bijective and continuous. Further we note $h_1^{-1}(z)=\phi^{-1}(|z|)e^{i\arg(z)}$ which is continuous. Hence $h_1$ is indeed a homeomorphism of $\C$ to itself. Finally, let $U_j:=B_{2^nR^n+j}(0)$ for $j=1,2$. Define $F_1=P\circ h_1$. 

\begin{claim} \label{F_1 is lip on U_2}
    $F_1$ is Lipschitz on $U_2$.
\end{claim}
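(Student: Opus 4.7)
The plan is to split the claim into Lipschitzness of $h_1$ on $U_2$ and Lipschitzness of $P$ on the (bounded) image $h_1(U_2)$, then conclude by composition. Since $\phi$ is monotone, $h_1(U_2)\subseteq\overline{B_{\phi(2^nR^n+2)}(0)}$; because $P$ is a polynomial its gradient is bounded on any bounded set, so $P$ is Lipschitz on a convex open neighbourhood of this closed ball with some constant $L_P$. This first step is immediate.

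The substantive task is to show $h_1$ is Lipschitz on $U_2$, which I would deduce from two radial observations: (a) $\phi\colon[0,+\infty)\to[0,+\infty)$ is continuous and piecewise $C^\infty$ with bounded derivative on each of the three intervals $[0,R]$, $[R,2^nR^n]$, $[2^nR^n,2^nR^n+2]$, hence $\phi$ is $L_\phi$-Lipschitz on $[0,2^nR^n+2]$ for some constant $L_\phi>0$; and (b) the ratio $\phi(r)/r$ is bounded above by some constant $C>0$ on $(0,2^nR^n+2]$, as one checks directly on each piece ($\phi(r)/r=1$ on $(0,R]$; $\phi(r)/r\leq 2$ on $[R,2^nR^n]$ because $\phi\leq 2R$ there; and $\phi(r)/r=r^{1/n-1}$ is bounded on the outer piece).

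Given (a) and (b), for any $z,w\in U_2\setminus\{0\}$ I would apply the decomposition
\[
h_1(z)-h_1(w) \;=\; (\phi(|z|)-\phi(|w|))\,\tfrac{z}{|z|} + \phi(|w|)\bigl(\tfrac{z}{|z|}-\tfrac{w}{|w|}\bigr)
\]
together with the elementary estimate $\bigl|\tfrac{z}{|z|}-\tfrac{w}{|w|}\bigr|\leq 2|z-w|/\max(|z|,|w|)$. Monotonicity of $\phi$ combined with (b) controls $\phi(|w|)/\max(|z|,|w|)$ by $C$, which together with (a) yields $|h_1(z)-h_1(w)|\leq(L_\phi+2C)|z-w|$. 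The remaining case $w=0$ reduces at once to (a), so $h_1$ is Lipschitz on $U_2$, and hence $F_1=P\circ h_1$ is Lipschitz on $U_2$.

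The main obstacle is the routine-but-fiddly case analysis across the three pieces of $\phi$ needed to verify (a) and (b). Particular care is needed at the transition radii $r=R$ and $r=2^nR^n$, where $\phi$ is continuous but not differentiable, and as $r\to 0^+$; both are in fact harmless, the former because the one-sided derivatives of $\phi$ are bounded on each side and the latter because $\phi$ coincides with the identity near $0$.
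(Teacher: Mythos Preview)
Your proof is correct and shares the same overall architecture as the paper's: show $h_1$ is Lipschitz on $U_2$, observe that $P$ is Lipschitz on the bounded set $h_1(U_2)$, and compose. The difference is in how the Lipschitzness of $h_1$ is established. The paper covers $U_2$ by two overlapping regions, $B_R(0)$ (where $h_1$ is the identity) and $U_2\setminus\overline{B}_{R/2}(0)$ (where $z\mapsto z/|z|$ is Lipschitz with constant $4/R$ and $\phi$ is bounded and Lipschitz), obtains pointwise Lipschitz constants on each, and then invokes Lemma~\ref{pointwise Lip on convex open implies restriction is lipschitz} to pass to a global constant on the convex set $U_2$. You instead give a single global estimate by pairing the Lipschitz bound on $\phi$ with the uniform bound on $\phi(r)/r$ and the inequality $\bigl|z/|z|-w/|w|\bigr|\le 2|z-w|/\max(|z|,|w|)$; the monotonicity of $\phi$ then lets you control $\phi(|w|)/\max(|z|,|w|)$ by $C$ regardless of which of $|z|,|w|$ is larger. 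Your route is marginally more direct in that it avoids the two-region split and the appeal to Lemma~\ref{pointwise Lip on convex open implies restriction is lipschitz}, at the cost of the extra observation~(b) on $\phi(r)/r$, which is what handles the singularity of $z/|z|$ at the origin in one stroke rather than by carving out a neighbourhood where $h_1$ is the identity.
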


\begin{innerproof}
We first show that $h_1$ is Lipschitz on $U_2$. Note that $h_1$ is pointwise 1-Lipschitz at each $z_0\in B_R(0)$, since if $r>0$ if sufficiently small such that $B_r(z_0)\subseteq B_R(0)$, then $h_1\left(B_r\left(z_0\right)\right)=B_r\left(z_0\right)=B_r\left(h_1(z_0)\right)$. 

To see that $h_1$ is pointwise Lipschitz at each $z_0\in U_2\setminus \overline{B}_{R/2}(0)$, first note that $\phi$ is Lipschitz on $[R/2,2^nR^n+2]$. Moreover observe that $e^{i\arg(z)}=z/|z|$ is Lipschitz on $\C\setminus B_{R/2}(0)$, as if $z,w\in \C\setminus B_{R/2}(0)$, then 
    \begin{equation*}
        \left|\dfrac{z}{|z|}-\dfrac{w}{|w|}\right|\leq \dfrac{1}{|z|\cdot|w|}\left(|w|\cdot|z-w|+|w|\cdot\bigg||w|-|z|\bigg|\right)\leq \dfrac{4}{R}|z-w|. 
    \end{equation*}
Thus, $h_1(z)=\phi(|z|)e^{i\arg(z)}$ is the product of two bounded Lipschitz functions on the bounded domain $A=\{z\in \C:R/2\leq |z|\leq 2^nR^n+2\}$. Therefore, $\restr{h_1}{A}$ is $L$-Lipschitz for some $L>0$. In particular, we conclude that $h_1$ is pointwise $L$-Lipschitz at each $z\in U_2\setminus\overline{B}_{R/2}(0)$. 

Therefore Lemma~\ref{pointwise Lip on convex open implies restriction is lipschitz} implies $h_1$ is $\max\left(1,L\right)$-Lipschitz on the convex, open set $U_2$. Now, $F_1=P\circ h_1$ is the composition of $P$, a polynomial, which is Lipschitz on the bounded set $h_1(U_2)$ and $h_1$, which is Lipschitz on $U_2$. Therefore, $F_1$ is Lipschitz on $U_2$.
\end{innerproof}

\begin{claim} \label{F_1 is Lip outside of U_1}
    $F_1$ is Lipschitz on $\C\setminus \overline{U_1}$.
\end{claim}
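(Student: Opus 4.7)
The plan is to reduce the claim to a direct application of Lemma~\ref{archetypal case} and Lemma~\ref{k/n general function lipschitz}, by exploiting the explicit formula for $h_1$ on the outer region. Indeed, on $\C\setminus\overline{U_1}$ we have $|z|>2^nR^n+1>2^nR^n$, so $h_1(z)=|z|^{1/n}e^{i\arg(z)}$ there. Consequently, for every integer $k\geq 0$,
\[
(h_1(z))^k=|z|^{k/n}e^{ik\arg(z)},
\]
which for $k=n$ is precisely the mapping $f_n(z)=|z|e^{in\arg(z)}$ from Lemma~\ref{archetypal case}, and for $1\leq k\leq n-1$ is exactly the function $g_{k,n}$ from Lemma~\ref{k/n general function lipschitz}.

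Since $P$ is monic of degree $n$, I would then expand
\[
F_1(z)=f_n(z)+\sum_{k=1}^{n-1}a_k\,g_{k,n}(z)+a_0
\]
on $\C\setminus\overline{U_1}$, and bound each term separately. By Lemma~\ref{archetypal case}, $f_n$ is $n$-Lipschitz on all of $\C$, hence on $\C\setminus\overline{U_1}$. For each $k\in\{1,\dots,n-1\}$ with $a_k\neq 0$, condition~\ref{R is sufficiently large concerning lipschitz} in the definition of $R$ together with Notation~\ref{D_k notation and choice of R} gives $R\geq D_k$, so $\C\setminus\overline{U_1}\subseteq\C\setminus B_R(0)\subseteq\C\setminus B_{D_k}(0)$; thus Lemma~\ref{k/n general function lipschitz} guarantees that $g_{k,n}$ is $1/(2n|a_k|)$-Lipschitz on $\C\setminus\overline{U_1}$, and therefore $a_k\,g_{k,n}$ is $1/(2n)$-Lipschitz there. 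Terms with $a_k=0$ vanish, and $a_0$ contributes nothing to the Lipschitz constant. Summing via the triangle inequality yields a Lipschitz constant of at most $n+(n-1)/(2n)$ for $F_1$ on $\C\setminus\overline{U_1}$.

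There is essentially no real obstacle to this argument: the entire purpose of the judicious choice of $R$ in Notation~\ref{D_k notation and choice of R} and of the specific form of $\phi$ on $\{t\geq 2^nR^n\}$ is precisely to make this term-by-term Lipschitz estimate succeed. The only care needed is to confirm that $\C\setminus\overline{U_1}$ lies entirely in the outer branch of $\phi$, which is immediate from $U_1=B_{2^nR^n+1}(0)$. Unlike in Claim~\ref{F_1 is lip on U_2}, there is no need to invoke Lemma~\ref{pointwise Lip on convex open implies restriction is lipschitz} to upgrade a pointwise estimate to a global one, since Lemma~\ref{k/n general function lipschitz} already delivers a genuinely global Lipschitz bound on the unbounded set $\C\setminus B_{D_k}(0)$ (which need not be convex).
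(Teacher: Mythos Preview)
Your proposal is correct and follows essentially the same approach as the paper: both expand $F_1$ on $\C\setminus\overline{U_1}$ as $a_0+f_n+\sum_{k=1}^{n-1}a_k g_{k,n}$ using the outer branch of $h_1$, then apply Lemma~\ref{archetypal case} to $f_n$ and Lemma~\ref{k/n general function lipschitz} together with Notation~\ref{D_k notation and choice of R}~\ref{R is sufficiently large concerning lipschitz} to each $a_k g_{k,n}$. Your version is slightly more explicit (giving the constant $n+(n-1)/(2n)$ and noting that no convexity argument is needed), but the argument is the same.
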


\begin{innerproof}
To see $F_1$ is Lipschitz outside of $\overline{U_1}$ note for $z\not\in \overline{U_1}$ that $F_1(z)$ takes the specific form
    \begin{equation} \label{f specific form}
        F_1(z)=a_0+f_n(z)+\sum\limits_{k=1}^{n-1}a_kg_{k,n}(z),
    \end{equation}
where $f_n$ is defined as in Lemma~\ref{archetypal case} and $g_{k,n}$ as in Lemma~\ref{k/n general function lipschitz} for each $k\in\{1,\dots,n-1\}$.

Hence, as $f_n$ is $n$-Lipschitz on $\C$ by Lemma~\ref{archetypal case}, to show $F_1$ is Lipschitz on $\C\setminus\overline{U_1}$ it suffices to show for each $k\in\{1,\dots,n-1\}$ that $a_kg_{k,n}$ is Lipschitz on $\C\setminus \overline{U_1}$; this follows by Lemma~\ref{k/n general function lipschitz} and the choice of $R$ and $D_k$ in Notation~\ref{D_k notation and choice of R}~(b). Hence $F_1$ is Lipschitz on $\C\setminus \overline{U_1}$. 
\end{innerproof}

\begin{remark}\label{F_1 is Lipschitz on C}
Recall by Claims~\ref{F_1 is lip on U_2}, \ref{F_1 is Lip outside of U_1} that $F_1$ is Lipschitz on both $\C\setminus\overline{U_1}$ and $U_2$. Therefore Lemma~\ref{pointwise Lip on convex open implies restriction is lipschitz} yields that there exists $L_1>0$ such that $F_1$ is $L_1$-Lipschitz on $\C$.
\end{remark}

\begin{claim}\label{choice of r} Recall \eqref{V_beta r definition}-\eqref{taylor expansion of Q_j about z_j} from Notation~\ref{definition of S(P')} and the choice of $R$ from Notation~\ref{D_k notation and choice of R}. There exists $r\in\left(0,1\right)$ such that:
        \begin{enumerate}[label=(\roman*)]
            \item \label{property i of r}the balls $\overline{B}_{2r}(z_j)$ around roots $z_j\in S(P')$ of $P'$, are pairwise disjoint;
            \item  \label{property ii of r} $V^P_{2r}\subseteq B_{R}(0)$;
            \item  \label{property iii of r} $r\leq\min\limits_{j:z_j\in S(P')}\varepsilon_j^{m_j}$, where for each $z_j\in S(P')$ we define $\varepsilon_j>0$ by 
                \[
                    \varepsilon_j:=
                        \begin{cases}
                            \dfrac{|Q_j(z_j)|}{2(1+n)\sum\limits_{k=1}^{n-m_j}|c_{k,j}|},\quad&\text{if $n>m_j$ and $\sum\limits_{k=1}^{n-m_j}|c_{k,j}|\neq 0$},\\
                            1,\quad&\text{otherwise.}\\
                        \end{cases}
                \]
            \item\label{property iv of r} $|Q_j(z_j)|/2\leq|Q_j(y)|\leq 2|Q_j(z_j)|$ for each $y\in B_r(z_j)$ such that $z_j\in S(P')$. 
        \end{enumerate}
\end{claim}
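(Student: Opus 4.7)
The plan is that each of the four conditions (i)--(iv) imposes an upper bound on $r$, and because $S(P')$ is finite (of cardinality at most $n-1$ by the fundamental theorem of algebra), the resulting minimum is still strictly positive. We then take $r$ to be any positive number less than $1$ and less than each of the four bounds.

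More concretely, I would produce four positive quantities $r_1,r_2,r_3,r_4$, one per condition, and define $r := \tfrac{1}{2}\min\{1,r_1,r_2,r_3,r_4\}$. For \ref{property i of r}, if $|S(P')|\geq 2$ set $r_1 := \tfrac14\min\{|z_j-z_k|:z_j,z_k\in S(P'),\ j\neq k\}>0$, so that any two balls of radius $2r\leq 2r_1$ about distinct roots of $P'$ are disjoint; if $|S(P')|\leq 1$, no constraint is needed and we can take $r_1=1$. For \ref{property ii of r}, property~\ref{roots of P' inside ball of radius R} of Notation~\ref{D_k notation and choice of R} gives $S(P')\subseteq B_R(0)$, so $r_2 := \tfrac14\min\{R-|z_j| : z_j\in S(P')\}>0$ (with the convention that $r_2=1$ if $S(P')=\emptyset$) guarantees $\overline{B}_{2r}(z_j)\subseteq B_R(0)$ for every $z_j\in S(P')$, hence $V^P_{2r}\subseteq B_R(0)$.

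For \ref{property iii of r}, the quantities $\varepsilon_j$ are all strictly positive by inspection of their definition (the denominator in the first case is positive by \eqref{Q_j(z_j) is not zero}, and otherwise $\varepsilon_j=1$), so $r_3 := \min\{\varepsilon_j^{m_j} : z_j\in S(P')\}>0$ (with $r_3=1$ if $S(P')=\emptyset$). For \ref{property iv of r}, each $Q_j$ is a polynomial, hence continuous at $z_j$, and $Q_j(z_j)\neq 0$ by \eqref{Q_j(z_j) is not zero}; thus for each $z_j\in S(P')$ there is $\rho_j>0$ such that $|Q_j(y)-Q_j(z_j)|\leq |Q_j(z_j)|/2$ for all $y\in B_{\rho_j}(z_j)$, giving the desired two-sided bound on $|Q_j(y)|$ by the reverse triangle inequality. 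Set $r_4:=\min\{\rho_j:z_j\in S(P')\}>0$ (or $1$ if $S(P')=\emptyset$).

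No step here is deep; the only thing to be careful about is handling the degenerate cases when $S(P')$ is empty (in which case all four statements are vacuously true, so we may simply define $r=1/2$) or a singleton (no disjointness constraint in \ref{property i of r}). The proof is just a finite selection argument, so the ``obstacle'' is purely bookkeeping: making sure each positivity assertion, in particular $\varepsilon_j>0$ for \ref{property iii of r}, is justified via \eqref{Q_j(z_j) is not zero}. Then $r := \tfrac{1}{2}\min\{1,r_1,r_2,r_3,r_4\}\in (0,1)$ simultaneously satisfies \ref{property i of r}--\ref{property iv of r}.
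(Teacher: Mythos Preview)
Your proof is correct and follows essentially the same approach as the paper: each of the four conditions is shown to hold for all sufficiently small $r>0$ (using finiteness of $S(P')$, openness of $B_R(0)$, positivity of each $\varepsilon_j$ via \eqref{Q_j(z_j) is not zero}, and continuity of $Q_j$), and then one takes a common small $r$. The paper's proof is just a terser version of your argument, without writing down explicit formulae for the thresholds $r_1,\dots,r_4$.
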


\begin{innerproof}
Property \ref{property i of r} is easy to satisfy as there are only finitely many distinct roots in $S(P')$. Next, property \ref{property ii of r} is satisfied for sufficiently small $r>0$ since $S(P')\subseteq B_R(0)$ and $B_R(0)$ is open. Property \ref{property iii of r} follows naturally by \eqref{Q_j(z_j) is not zero} since each $\varepsilon_j$ is positive and there are only finitely many of these terms. Finally, it is possible to satisfy property~\ref{property iv of r} since each polynomial $Q_j$ is continuous on $\C$ and $Q_j(z_j)\neq 0$ by \eqref{Q_j(z_j) is not zero}. 
\end{innerproof}

For the rest of the proof of Proposition~\ref{main result}, we fix $r\in (0,1)$ provided by Claim~\ref{choice of r}. Recall \eqref{V_beta r definition}, and define the closed sets $W$ and $V$ to be the following: 
    \begin{equation}\label{definition of the closed sets for the construction proof}
        W=V^P_{r/2}, \quad V=V^P_r.
    \end{equation}

\begin{claim}\label{F_1 co-Lip in U_2 outside W}
    There exists $c_0>0$ such that $F_1$ is pointwise $c_0$-co-Lipschitz at each $z\in U_2\setminus W$.
\end{claim}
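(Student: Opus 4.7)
The plan is to partition $U_2\setminus W$ into three regions reflecting the piecewise definition of $\phi$, establish pointwise co-Lipschitzness of $F_1$ on each with a uniform positive constant, and take $c_0$ to be the minimum of these. Throughout I use that $W\subset B_R(0)$ (by property~\ref{property ii of r} of Claim~\ref{choice of r}), so $z\notin W$ forces $\text{dist}(z,S(P'))>r/2$.

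For $|z_0|<R$ with $z_0\notin W$: on a Euclidean ball $B_\rho(z_0)\subset B_R(0)$, $h_1$ is the identity, so $F_1=P$ locally. The compact set $K=\{z:|z|\leq R,\,\text{dist}(z,S(P'))\geq r/2\}$ is disjoint from $S(P')$, hence $\delta_1:=\min_K|P'|>0$. A standard inverse function theorem estimate applied to the holomorphic $P$ at $z_0$, combined with the open mapping theorem for non-constant polynomials and Lemma~\ref{nicer co-Lipschitz condition for non-open maps}, gives pointwise $(\delta_1/2)$-co-Lipschitzness of $F_1$ at $z_0$.

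For $|z_0|>2^nR^n$ (and inside $U_2$): recalling the representation used in the proof of Claim~\ref{F_1 is Lip outside of U_1}, on $\{|z|\geq R\}$ we have $F_1(z)=f_n(z)+\sum_{k=1}^{n-1}a_kg_{k,n}(z)+a_0$. By Corollary~\ref{standard is strongly co-Lipschitz}, $f_n$ is strongly $1$-co-Lipschitz at $z_0$; by Lemma~\ref{k/n general function lipschitz} and the choice of $R$ and $D_k$ in Notation~\ref{D_k notation and choice of R}, the perturbation $\sum_{k=1}^{n-1}a_kg_{k,n}$ is at most $(n-1)/(2n)$-Lipschitz on $\{|z|\geq R\}$, and so strictly less than $1/2$-Lipschitz. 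The reverse triangle inequality yields $|F_1(z)-F_1(z_0)|\geq\tfrac{1}{2}|z-z_0|$ on a small ball around $z_0$. Since $F_1=P\circ h_1$ is a composition of two open mappings (the homeomorphism $h_1$ and the non-constant polynomial $P$), the image of this ball is open and contains $F_1(z_0)$ in its interior; Definition~\ref{definition for strongly co-Lipschitz} together with Lemma~\ref{nicer co-Lipschitz condition for non-open maps} then give pointwise $(1/2)$-co-Lipschitzness of $F_1$ at $z_0$.

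The main obstacle is the middle annular region $R\leq|z_0|\leq 2^nR^n$ (into which I absorb the boundary values $|z_0|=R$ and $|z_0|=2^nR^n$), because $h_1$ is there neither the identity nor the intended asymptotic profile and its co-Lipschitz constant degenerates with $R^{n-1}$. The plan is to exploit that $h_1$ is radial with piecewise-linear $\phi$ and derive its local co-Lipschitzness in polar coordinates: the radial scaling is $\phi'(|z_0|)=1/(2^nR^{n-1}-1)$ in the interior of the annulus, while the angular scaling is $\phi(|z_0|)/|z_0|\in[1/(2^{n-1}R^{n-1}),1]$. A direct estimate combining these yields a uniform positive lower bound $\alpha$, depending only on $n$ and $R$, for the pointwise co-Lipschitz constant of $h_1$ at $z_0$ (using Lemma~\ref{nicer co-Lipschitz condition for non-open maps} with openness of $h_1$ as a homeomorphism). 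Since $h_1(z_0)$ stays in the compact annulus $\{R\leq|w|\leq 2R\}$, which is disjoint from $S(P')\subset B_R(0)$, we have $|P'(h_1(z_0))|\geq\delta_2>0$ uniformly, and the same inverse function theorem argument as in Region~1 gives pointwise $(\delta_2/2)$-co-Lipschitzness of $P$ at $h_1(z_0)$. Lemma~\ref{composition of locally co-lip is locally co-lip} then makes $F_1$ pointwise $(\alpha\delta_2/2)$-co-Lipschitz at $z_0$. Taking $c_0:=\min(\delta_1/2,\,1/2,\,\alpha\delta_2/2)>0$ completes the proof.
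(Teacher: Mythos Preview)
Your proposal is correct but takes a different route from the paper. The paper's proof is more unified: it shows in one stroke that $h_1$ is pointwise $(1/L)$-co-Lipschitz everywhere on $U_2$ (by observing that $h_1^{-1}(z)=\phi^{-1}(|z|)e^{i\arg(z)}$ is a product of bounded Lipschitz functions on the bounded set $h_1(U_2)$ and invoking Lemma~\ref{inverse is lipschitz}), then shows $P$ is pointwise $\xi$-co-Lipschitz everywhere on $h_1(U_2\setminus W)$ (using that $|P'|$ attains a positive minimum on that compact set together with the inverse function theorem), and finishes with a single application of Lemma~\ref{composition of locally co-lip is locally co-lip}. You instead split $U_2\setminus W$ into three annular regions matched to the pieces of $\phi$ and argue separately on each. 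This is more hands-on and yields explicit constants, but it duplicates effort: your Region~2 argument is essentially the later Claim~\ref{F_1 is pointwise co-Lip outside U_1 bar}, and your Region~3 polar-coordinate estimate reproves the co-Lipschitzness of $h_1$ that the paper obtains for free from the Lipschitzness of $h_1^{-1}$. One small slip to fix: the identity $F_1(z)=f_n(z)+\sum_{k=1}^{n-1}a_kg_{k,n}(z)+a_0$ holds only for $|z|\geq 2^nR^n$, not on all of $\{|z|\geq R\}$ as you wrote; since you only invoke it on Region~2 this does not affect the argument. Your treatment of the boundary circles $|z_0|=R$ and $|z_0|=2^nR^n$ in Region~3, where $\phi$ is not differentiable, is a bit thin; the cleanest patch is to note that $\phi^{-1}$ is globally Lipschitz on the relevant compact interval, which is exactly the paper's viewpoint.
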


\begin{innerproof}
We first show that there exist positive constants $L$ and $\xi$ such that $h_1$ is pointwise $(1/L)$-co-Lipschitz at each $z\in U_2$ and the polynomial $P$ is pointwise $\xi$-co-Lipschitz at each $z\in h_1\left(U_2\setminus W\right)$. Then we appeal to Lemma~\ref{composition of locally co-lip is locally co-lip} to conclude that $F_1$ is pointwise $c_0:=\left(\frac{\xi}{L}\right)$-co-Lipschitz at each $z\in U_2\setminus W$. 

By arguing similarly to the proof of Claim~\ref{F_1 is lip on U_2}, namely as $h_1^{-1}(z)=\phi^{-1}(|z|)e^{i\arg(z)}$ is the product of two bounded Lipschitz functions, there exists $L>0$ such that $h_1^{-1}$ is pointwise $L$-Lipschitz at $h_1(z)$ for each $z\in U_2$. Thus Lemma~\ref{inverse is lipschitz} and the arbitrariness of $z\in U_2$ implies $h_1$ is pointwise $(1/L)$-co-Lipschitz at each $z\in U_2$. 

Observe by Claim~\ref{choice of r}~\ref{property ii of r} that $S(P')\subseteq W\subseteq B_R(0)$. Therefore, as $h_1$ is the identity on $B_R(0)$ and since $\left|h_1(z)\right|\geq R$ for $|z|\geq R$, we conclude that $\overline{h_1(U_2\setminus W)}\cap S(P')=\emptyset$. As $P'$ is a polynomial, hence continuous, $|P'|$ assumes its minimal value $2\xi>0$ on the compact set $\overline{h_1(U_2\setminus W)}$. In particular for each $z\in h_1(U_2\setminus W)$ note $P'(z)\neq 0$ and thus, by \cite[Theorem~7.5]{manifolds}, there exist open neighbourhoods $N_{P(z)}\subseteq F_1\left(U_2\setminus W\right)$ and $N_z\subseteq h_1\left(U_2\setminus W\right)$ of $P(z)$ and $z$ respectively such that $P:N_z\to N_{P(z)}$ is a continuous bijective open mapping, hence a homeomorphism. Further, $(P^{-1})'(P(z))=1/P'(z)$. Therefore for each $z\in h_1\left(U_2\setminus W\right)$ it follows that $|(P^{-1})'(P(z))|\leq 1/(2\xi)$. Hence $P^{-1}$ is pointwise $\frac{1}{\xi}$-Lipschitz at $P(z)$. By Lemma~\ref{inverse is lipschitz} and Remark~\ref{remark: strongly co-Lip in open subsets} we hence conclude $P$ is pointwise $\xi$-co-Lipschitz at $z$ since $P:N_z\to N_{P(z)}$ is a homeomorphism, $N_z$ and $N_{P(z)}$ are open subsets of $\C$ and $z\in N_z$. We conclude $P$ is pointwise $\xi$-co-Lipschitz at each $z\in h_1\left(U_2\setminus W\right)$. 

Now $h_1$ is pointwise $\frac{1}{L}$-co-Lipschitz at each $z\in U_2\setminus W$ and $P$ is pointwise $\xi$-co-Lipschitz at each $h_1(z)\in h_1\left(U_2\setminus W\right)$. Therefore by Lemma~\ref{composition of locally co-lip is locally co-lip} we conclude $F_1$ is pointwise $c_0$-co-Lipschitz at each $z\in U_2\setminus W$ where $c_0=\xi/L>0$. 
\end{innerproof}

\begin{remark}\label{remark about strongly co-Lipschitz outside of Int(V)}
Since $\left(U_2\setminus W\right)\cap h_1^{-1}(S(P'))=\emptyset$, by Proposition~\ref{reworded cristina}, $F_1$ is locally injective at each $z\in U_2\setminus W$. Further, $U_2\setminus W$ is open. Therefore Remark~\ref{remark: strongly co-Lip in open subsets}, Corollary~\ref{equivalence of strongly co-Lipschitz and pointwise co-Lipschitz} and Claim~\ref{F_1 co-Lip in U_2 outside W} imply $F_1$ is strongly $c_0$-co-Lipschitz at each $z\in U_2\setminus W$. In particular, for each $z\in U_2\setminus \text{Int}(V)$ there exists $\rho=\rho(z)>0$ such that $B_{\rho}(z)\subseteq U_2\setminus W$ and 
    \begin{equation}\label{F_1 is strongly co-Lipschitz outside of Int(V)}
        \left|F_1(z)-F_1(x)\right|\geq c_0\left|z-x\right|\quad\text{ for all }x\in B_\rho(z). 
    \end{equation}
\end{remark}

\begin{claim} \label{F_1 is pointwise co-Lip outside U_1 bar}
    $F_1$ is $\frac{1}{2}$-pointwise co-Lipschitz at each $z\in \C\setminus \overline{U_1}$.
\end{claim}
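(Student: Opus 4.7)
The plan is to exploit the explicit decomposition \eqref{f specific form} of $F_1$ on $\C\setminus\overline{U_1}$: writing $F_1(z)=a_0+f_n(z)+G(z)$ with $G(z):=\sum_{k=1}^{n-1}a_kg_{k,n}(z)$, the dominant term $f_n$ is strongly $1$-co-Lipschitz at every point by Corollary~\ref{standard is strongly co-Lipschitz}, while the lower order term $G$ is uniformly small-Lipschitz on $\C\setminus\overline{U_1}$ thanks to the careful choice of $R$ in Notation~\ref{D_k notation and choice of R}. Combining these two pieces through the reverse triangle inequality should produce a local lower bound $|F_1(z)-F_1(z_0)|\geq\tfrac12|z-z_0|$, which I will then convert into pointwise $\tfrac12$-co-Lipschitzness via the openness of $F_1$ and Corollary~\ref{nicer co-lip condition for open maps}.

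More concretely, fix $z_0\in\C\setminus\overline{U_1}$. Corollary~\ref{standard is strongly co-Lipschitz} supplies $\rho_1>0$ such that $|f_n(z)-f_n(z_0)|\geq|z-z_0|$ for all $z\in B_{\rho_1}(z_0)$. Separately, Notation~\ref{D_k notation and choice of R}\ref{R is sufficiently large concerning lipschitz} together with Lemma~\ref{k/n general function lipschitz} gives, for each $k$ with $a_k\neq 0$, that $g_{k,n}$ is $\tfrac{1}{2n|a_k|}$-Lipschitz on $\C\setminus B_R(0)$, so each $a_kg_{k,n}$ is $\tfrac{1}{2n}$-Lipschitz there, and summing yields that $G$ is $\tfrac{n-1}{2n}$-Lipschitz on $\C\setminus B_R(0)\supseteq\C\setminus\overline{U_1}$. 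Picking $r_0\in(0,\rho_1)$ small enough that $B_{r_0}(z_0)\subseteq\C\setminus\overline{U_1}$, for every $z\in B_{r_0}(z_0)$ the reverse triangle inequality yields
\[
|F_1(z)-F_1(z_0)|\geq|f_n(z)-f_n(z_0)|-|G(z)-G(z_0)|\geq\left(1-\tfrac{n-1}{2n}\right)|z-z_0|\geq\tfrac{1}{2}|z-z_0|.
\]

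Finally, to pass from this inequality to pointwise $\tfrac12$-co-Lipschitzness at $z_0$ I will invoke Corollary~\ref{nicer co-lip condition for open maps}, which requires $F_1$ to be open. This is immediate: $h_1$ is a homeomorphism and the non-constant polynomial $P$ is open by the open mapping theorem for holomorphic functions, so their composition is open. Alternatively one can argue locally as in the proof of Claim~\ref{F_1 co-Lip in U_2 outside W}, noting that $|h_1(z_0)|=|z_0|^{1/n}>R$ places $h_1(z_0)$ outside $S(P')\subseteq B_R(0)$ and using the inverse function theorem to obtain open neighbourhoods of $h_1(z_0)$ on which $P$ is a homeomorphism. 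No serious obstacle arises; the only thing to be mildly careful about is matching the validity radius $\rho_1$ coming from the strong co-Lipschitzness of $f_n$ at $z_0$ with the region where the decomposition \eqref{f specific form} holds, but since both $B_{\rho_1}(z_0)$ and $\C\setminus\overline{U_1}$ are open neighbourhoods of $z_0$ their intersection contains a ball of some positive radius, which is all that is required.
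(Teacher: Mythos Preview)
Your proof is correct and follows essentially the same route as the paper: both exploit the decomposition \eqref{f specific form}, use Corollary~\ref{standard is strongly co-Lipschitz} for the dominant term $f_n$ and the $\tfrac{1}{2n|a_k|}$-Lipschitz bound on each $g_{k,n}$ from Notation~\ref{D_k notation and choice of R}\ref{R is sufficiently large concerning lipschitz} for the lower-order terms, then combine via the reverse triangle inequality and invoke Corollary~\ref{nicer co-lip condition for open maps} using the openness of $F_1=P\circ h_1$. The only cosmetic difference is that you package the lower-order terms into a single $\tfrac{n-1}{2n}$-Lipschitz map $G$, whereas the paper bounds the sum termwise, but the arithmetic is identical.
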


\begin{innerproof}
Fix any $z_0\in\C\setminus \overline{U_1}$. Recall $F_1=P\circ h_1$ where $P$ is a non-constant polynomial of one variable, so is an open map, and $h_1$ is a homeomorphism. Therefore $F_1$ is open. By Corollary~\ref{nicer co-lip condition for open maps} and Remark~\ref{remark: strongly co-Lip in open subsets}, as $\C\setminus\overline{U_1}$ is open, to check that $F_1$ is pointwise $(1/2)$-co-Lipschitz at $z_0$, it is enough to verify property~\ref{inequality for strongly} of Definition~\ref{definition for strongly co-Lipschitz} is satisfied; that is, to show that there exists $\rho=\rho(z_0)>0$ such that 
    \begin{equation}\label{f is 1/2-co-lip condition}
        |F_1(x)-F_1(z_0)|\geq \dfrac{\left|x-z_0\right|}{2} \quad \text{for each }x\in B_\rho(z_0).
    \end{equation}
Recall by Corollary~\ref{standard is strongly co-Lipschitz} that $f_n$ is strongly 1-co-Lipschitz at $z_0$. Hence there exists $\rho_1=\rho_1(z_0)>0$ such that
    \begin{equation}\label{standard is 1-strongly co-Lipschitz}
        \left|f_n(z_0)-f_n(x)\right|\geq \left|z_0-x\right|\quad\text{for each }x\in B_{\rho_1}(z_0).
    \end{equation}
Choose $\rho=\rho\left(z_0\right)>0$ sufficiently small such that $\rho<\rho_1$ and $B_{\rho}(z_0)\subseteq \C\setminus \overline{U_1}$. Let $x\in B_{\rho}(z_0)$ and put $s=|x-z_0|<\rho$. Recall \eqref{f specific form}, that is $F_1=a_0 + f_n+\sum_{k=1}^{n-1}a_kg_{k,n}$, and so
    \begin{align}
        \left|F_1(x)-F_1(z_0)\right|&=\left|\left(f_n(z_0)-f_n(x)\right)+\sum\limits_{k=1}^{n-1}a_k\left(g_{k,n}(z_0)-g_{k,n}(x)\right)\right|\nonumber\\
        &\geq \left|f_n(z_0)-f_n(x)\right|-\sum\limits_{k=1}^{n-1}|a_k|\left|g_{k,n}(z_0)-g_{k,n}(x)\right|\\
        &\geq s-\sum\limits_{k=1}^{n-1}|a_k|\left|g_{k,n}(z_0)-g_{k,n}(x)\right|\label{final inequality in the decomposition of F1},
    \end{align}
where the last inequality follows from \eqref{standard is 1-strongly co-Lipschitz}. We show
    \begin{equation} \label{sum is sufficiently small for pointwise co-Lip in claim 4}
        \sum\limits_{k=1}^{n-1}|a_k|\left|g_{k,n}(z_0)-g_{k,n}(x)\right|\leq \dfrac{s}{2}.
    \end{equation}
Combining \eqref{sum is sufficiently small for pointwise co-Lip in claim 4} with \eqref{final inequality in the decomposition of F1} implies \eqref{f is 1/2-co-lip condition} which proves $F_1$ is pointwise $\frac{1}{2}$-co-Lipschitz at $z_0$ as claimed. 

To see \eqref{sum is sufficiently small for pointwise co-Lip in claim 4} recall Notation~\ref{D_k notation and choice of R}, in particular, recall \ref{R is sufficiently large concerning lipschitz}. As $R\geq D_k$, by Lemma~\ref{k/n general function lipschitz}, $g_{k,n}$ is $1/(2n|a_k|)$-Lipschitz on $\C\setminus B_R(0)$ for those $k\in\{1,\dots,n-1\}$ where $a_k\neq 0$. Hence 
    \begin{equation*} 
        \sum\limits_{k=1}^{n-1}|a_k|\left|g_{k,n}(z_0)-g_{k,n}(x)\right|\leq \sum\limits_{k=1}^{n-1}\dfrac{|z_0-x|}{2n}=\sum\limits_{k=1}^{n-1}\dfrac{s}{2n}\leq\dfrac{s}{2}.
    \end{equation*} 
\end{innerproof}

\begin{remark} \label{remark about pointwise co-Lip outside of W}
    Recall by Claim~\ref{F_1 co-Lip in U_2 outside W} that $F_1$ is pointwise $c_0$-co-Lipschitz at each $z\in U_2\setminus W$ and by Claim~\ref{F_1 is pointwise co-Lip outside U_1 bar} that $F_1$ is pointwise $\left(1/2\right)$-co-Lipschitz at each $z\in \C\setminus \overline{U_1}$. Therefore defining $c_1:=\min\left\{c_0,\frac{1}{2}\right\}$ we conclude $c_1>0$ and
\begin{equation}\label{F_1 is locally co-Lip on C take V}
F_1\text{ is pointwise }c_1\text{-co-Lipschitz at each }z\in\C\setminus W. 
\end{equation}
\end{remark}

We continue by defining the amended homeomorphism $h_2:\C\to\C$, which coincides with $h_1$ on $\C\setminus V$, and prove the pointwise co- and Lipschitz properties of the amended function $F_2=P\circ h_2$.  Indeed, define $h_2:\C\to\C$ via 
    \begin{equation}\label{h_2 equation}
        h_2(z)=\begin{cases} 
            h_1(z),&\quad\text{if $z\not\in V$};\\
            z_j+r^{1-\frac{1}{m_j}}|z-z_j|^{1/m_j}e^{i\arg(z-z_j)},&\quad\text{if $\left|z-z_j\right|\leq r$, $z_j\in S(P')$}.
            \end{cases}
    \end{equation}
See Notation~\ref{definition of S(P')} for definition of $m_j$. To check that $h_2$ is a homeomorphism first note that $\restr{h_2}{\C\setminus \text{Int}(V)}=\restr{h_1}{\C\setminus \text{Int}(V)}$ and $\restr{h_2}{\overline{B}_r(z_j)}$ is continuous for each $z_j\in S(P')$, thus $h_2$ is continuous. Further, as $h_2(\overline{B}_r(z_j))=h_1(\overline{B}_r(z_j))=\overline{B}_r(z_j)$, both $\restr{h_2}{\overline{B}_r(z_j)}$ and $\restr{h_2}{\C\setminus \text{Int}(V)}$ are bijective, and $h_2(\C\setminus V)\cap h_2(V)=h_1(\C\setminus V)\cap h_1(V)=\emptyset$, we conclude that $h_2:\C\to\C$ is bijective. Finally as $\restr{h_2^{-1}}{\overline{B}_r(z_j)}$ is continuous for each $z_j\in S(P')$ and $\restr{h_2^{-1}}{\C\setminus\text{Int}(V)}=\restr{h_1^{-1}}{\C\setminus\text{Int}(V)}$, we conclude $h_2$ is a homeomorphism of the plane to itself. 

Recall that $P(w)=(w-z_j)^{m_j}Q_j(w)+P(z_j)$ and so $F_2(z)=P(h_2(z))$ has the following form: 
\begin{equation}\label{equation: defining the mapping F_2}
    F_2(z)=\begin{cases}
                    F_1(z),\quad&\text{if $z\not\in V$};\\
                    P(z_j)+r^{m_j-1}f_{m_j}(z-z_j)Q_j(h_2(z)),\quad&\text{if $|z-z_j|\leq r$, $z_j\in S(P')$},
                \end{cases}
\end{equation}
where $f_{m_j}$ is defined as in Lemma~\ref{archetypal case}. 

Clearly, $F_1(z)=F_2(z)$ for each $z\in\partial V$ as $\restr{h_1}{\partial B_r(z_j)}=\restr{h_2}{\partial B_r(z_j)}$ for all $z_j\in S(P')$. Moreover, since $P$ is a complex polynomial, hence an open map, and as $h_2$ is a homeomorphism, we conclude that $F_2$ is an open map. 

\begin{remark}\label{remark: if m_j=n}
    If there exists $z_j\in S(P')$ such that $m_j=n$, then $P(z)=P(z_j)+Q_j(z_j)(z-z_j)^n$ where $Q_j(z_j)\neq 0$. Therefore, $S(P')=\{z_j\}$ and so $F_2(z)=P(z_j)+Q_j(z_j)r^{n-1}f_n(z-z_j)$ for $z\in B_r(z_j)$. Hence, in such a case by Lemma~\ref{archetypal case}, $F_2$ is pointwise $\left(\left|Q_j(z_j)\right|r^{n-1}\right)$-co-Lipschitz and pointwise $\left(\left|Q_j(z_j)\right|nr^{n-1}\right)$-Lipschitz at each $z\in B_r(z_j)$. 
\end{remark}

\begin{claim}\label{F_2 lip on each ball}
   For each $z_j\in S(P')$ there exists $d_j>0$ such that $F=\restr{F_2}{\overline{B}_r(z_j)}$ is $d_j$-Lipschitz when considered as a function from $\overline{B}_r(z_j)$ to $F_2(\overline{B}_r(z_j))$. 
\end{claim}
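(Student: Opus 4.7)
The plan is to decompose $F_2$ on $\overline{B}_r(z_j)$ as an explicit linear combination of auxiliary functions $G_l(w):=|w|^{(l+m_j)/m_j}e^{i(l+m_j)\arg(w)}$ and reduce the global Lipschitz property to a uniform pointwise Lipschitz bound for each $G_l$ on $\overline{B}_r(0)$, which I will extract from Corollary~\ref{Phi is bounded}.

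Starting from \eqref{equation: defining the mapping F_2} and expanding $Q_j(h_2(z))$ via the Taylor series \eqref{taylor expansion of Q_j about z_j} of $Q_j$ around $z_j$, I compute $(h_2(z)-z_j)^l = r^{l(m_j-1)/m_j}|z-z_j|^{l/m_j}e^{il\arg(z-z_j)}$ and multiply by $f_{m_j}(z-z_j)=|z-z_j|e^{im_j\arg(z-z_j)}$; the exponents combine to yield the identity
\[
F_2(z)-P(z_j) \;=\; r^{m_j-1}\sum_{l=0}^{n-m_j} c_{l,j}\, r^{l(m_j-1)/m_j}\, G_l(z-z_j), \qquad z\in\overline{B}_r(z_j).
\]
So it suffices to show each $G_l$ is Lipschitz on $\overline{B}_r(0)$.

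Since $G_l$ is continuous on $\overline{B}_r(0)$ and $B_r(0)$ is a dense, open and convex subset, by Lemma~\ref{pointwise Lip on convex open implies restriction is lipschitz} together with Lemma~\ref{continuous and lip on set, then lip on closure} it is enough to exhibit a constant $L_l$ for which $G_l$ is pointwise $L_l$-Lipschitz at every $w_0\in B_r(0)$. At $w_0\neq 0$ the identity $G_l(z)-G_l(w_0)=\Phi_{l,m_j}(z,w_0)(f_{m_j}(z)-f_{m_j}(w_0))$ holds on $A_{m_j}$. Applying Corollary~\ref{Phi is bounded} with $\varepsilon=1$ gives a neighbourhood $B_\rho(w_0)\times\{w_0\}\subseteq A_{m_j}$ on which $|\Phi_{l,m_j}(\cdot,w_0)|\leq 1+\frac{l+m_j}{m_j}|w_0|^{l/m_j}$, and combining with the $m_j$-Lipschitzness of $f_{m_j}$ from Lemma~\ref{archetypal case} shows $G_l$ is pointwise $m_j\bigl(1+\frac{l+m_j}{m_j}|w_0|^{l/m_j}\bigr)$-Lipschitz at $w_0$. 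Crucially, since $|w_0|^{l/m_j}\leq r^{l/m_j}\leq 1$ (using $r<1$ from Claim~\ref{choice of r}), this constant is uniformly dominated by $L_l:=l+2m_j$. At $w_0=0$ the estimate $|G_l(w)-G_l(0)|=|w|^{(l+m_j)/m_j}\leq|w|$ on $\overline{B}_r(0)$, valid since $(l+m_j)/m_j\geq 1$ and $r<1$, gives pointwise $1$-Lipschitzness, so the same $L_l$ applies at the origin as well.

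Summing over $l$ furnishes
\[
d_j := r^{m_j-1}\sum_{l=0}^{n-m_j}|c_{l,j}|\, r^{l(m_j-1)/m_j}\, L_l,
\]
which is the required Lipschitz constant of $F=\restr{F_2}{\overline{B}_r(z_j)}$. The main technical hurdle is securing the uniformity of the bound on $\Phi_{l,m_j}$ over $w_0\in B_r(0)\setminus\{0\}$: the function $\Phi_{l,m_j}$ is not globally bounded on $A_{m_j}$, but the local bound supplied by Corollary~\ref{Phi is bounded} is controlled by the diagonal value $\frac{l+m_j}{m_j}|w_0|^{l/m_j}$, which is uniformly bounded on $\overline{B}_r(0)$ precisely because Claim~\ref{choice of r} guarantees $r<1$; this is exactly what is needed to invoke Lemma~\ref{pointwise Lip on convex open implies restriction is lipschitz}.
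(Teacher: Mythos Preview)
Your proof is correct and follows essentially the same approach as the paper: both reduce to a uniform pointwise Lipschitz bound on $B_r(z_j)$ via Lemma~\ref{pointwise Lip on convex open implies restriction is lipschitz} and Lemma~\ref{continuous and lip on set, then lip on closure}, and both obtain that bound by combining the $m_j$-Lipschitzness of $f_{m_j}$ with the local bound on $\Phi_{l,m_j}$ from Corollary~\ref{Phi is bounded} (with $\varepsilon=1$), using $r<1$ to make the bound uniform. The only difference is organizational: you first isolate the auxiliary functions $G_l$ and prove each is Lipschitz on $\overline{B}_r(0)$, whereas the paper works directly with the difference $F_2(y)-F_2(x)$ and factors it as $r^{m_j-1}\bigl(f_{m_j}(y-z_j)-f_{m_j}(x-z_j)\bigr)$ times a bounded expression in the $\Phi_{l,m_j}$; your decomposition is arguably a little cleaner and also absorbs the special cases $m_j=n$ and $x=z_j$ without needing to single them out.
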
 

\begin{innerproof}
Fix $z_j\in S(P')$. We shall show that $F_2$ is pointwise $d_j$-Lipschitz at each $x\in B_r(z_j)$ for some $d_j>0$; the claim then follows by applying Lemma~\ref{pointwise Lip on convex open implies restriction is lipschitz} followed by Lemma~\ref{continuous and lip on set, then lip on closure}. 

If $m_j=n$, then by Remark~\ref{remark: if m_j=n} it follows $F_2$ is pointwise $\left(\left|Q_j(z_j)\right|nr^{n-1}\right)$-Lipschitz at each $z\in B_r(z_j)$.

Suppose that $m_j<n$. If $x=z_j$, then for each $y\in B_r(z_j)$, as $F_2(x)=F_2(z_j)=P(z_j)$ and $|f_{m_j}(y-z_j)|=|y-z_j|$, 
    \begin{equation*}
        |F_2(x)-F_2(y)|=r^{m_j-1}|Q_j(h_2(y))|\cdot|y-z_j|=r^{m_j-1}|Q_j(h_2(y))|\cdot|x-y|. 
    \end{equation*}
Since $h_2\left(B_r(z_j)\right)=B_r(z_j)$, by Claim~\ref{choice of r}~\ref{property iv of r}, $F_2$ is pointwise $2r^{m_j-1}|Q_j(z_j)|$-Lipschitz at $x=z_j$. 

Suppose now that $x\in B_r(z_j)\setminus\{z_j\}$. Let $\rho_1>0$ be such that $B_{\rho_1}(x)\subseteq B_r(z_j)$. Further, for each $l\in\{1,\dots,n-m_j\}$, let $\rho_{2,l}>0$ be given by Corollary~\ref{Phi is bounded}, where $w=x-z_j\neq 0$, so that for each $z\in B_{\rho_{2,l}}(w)$, $\Phi_{l,m_j}(z,w)$ is well-defined and 
    \begin{equation}\label{equation: boundedness of phi in F_2 Lip}
        \left|\Phi_{l,m_j}(z,w)\right|<1+\left|\Phi_{l,m_j}(w,w)\right|. 
    \end{equation}
Define $\rho_2:=\min\{\rho_{2,l}:1\leq l\leq n-m_j\}$ and $\rho:=\min(\rho_1,\rho_2)$. Note if $y\in B_{\rho}(x)$, then $z=y-z_j\in B_{\rho}(w)$. Considering \eqref{taylor expansion of Q_j about z_j}, \eqref{h_2 equation}, \eqref{equation: defining the mapping F_2} and Lemma~\ref{archetypal case} we deduce that if $y\in B_{\rho}(x)$, then 
\begin{align}
        \nonumber &F_2(y)-F_2(x)=F_2\left(z_j+|y-z_j|e^{i\arg(y-z_j)}\right)-F_2\left(z_j+|x-z_j|e^{i\arg(x-z_j)}\right)\\
        &=r^{m_j-1}\left(f_{m_j}(z)-f_{m_j}(w)\right)\left(c_{0,j}+\sum\limits_{l=1}^{n-m_j}r^{\frac{l(m_j-1)}{m_j}}c_{l,j}\cdot\Phi_{l,m_j}\left(z,w\right)\right),\label{decomposing F_2 into multiples of standard lipschit with phi function part 1}
\end{align}
where $z=y-z_j$ and $w=x-z_j$. To see that $F_2$ is pointwise Lipschitz at $x$, as $f_{m_j}$ is Lipschitz and $|z-w|=|y-x|$, it suffices to observe that $|\Phi_{l,m_j}(z,w)|$ are uniformly bounded over $z\in B_{\rho}(w)$ and $|w|=|x-z_j|<r<1$. Indeed, by \eqref{equation: boundedness of phi in F_2 Lip} as $l\in\{1,\dots,n-m_j\}$, observe that 
    \begin{equation*}
        \left|\Phi_{l,m_j}(z,w)\right|<1+|w|^{l/m_j}\dfrac{l+m_j}{m_j}\leq 1+\dfrac{nr^{1/m_j}}{m_j}\leq 1+n. 
    \end{equation*}
Hence, we conclude that there exists $d_j>0$ such that $F_2$ is pointwise $d_j$-Lipschitz at each $x\in B_r(z_j)$, which as explained above, implies the statement of Claim~\ref{F_2 lip on each ball}.
\end{innerproof}

\begin{claim}\label{claim: F_2 is L-Lipschitz on C}
There exists $L>0$ such that $F_2$ is $L$-Lipschitz on $\C$. 
\end{claim}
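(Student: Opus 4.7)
The plan is to combine Remark~\ref{F_1 is Lipschitz on C} and Claim~\ref{F_2 lip on each ball} to exhibit a uniform pointwise Lipschitz constant for $F_2$ on $\C$, and then invoke Lemma~\ref{pointwise Lip on convex open implies restriction is lipschitz} with $U = \C$, which is open and convex. Let $z_1,\dots,z_s$ denote the distinct elements of $S(P')$, let $L_1>0$ be as in Remark~\ref{F_1 is Lipschitz on C}, let $d_j>0$ be as in Claim~\ref{F_2 lip on each ball}, and set $L:=\max\{L_1,d_1,\dots,d_s\}$. I shall verify that $F_2$ is pointwise $L$-Lipschitz at each $z\in\C$ via a case analysis.

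Recall from \eqref{equation: defining the mapping F_2} that $F_2 = F_1$ on $\C\setminus V$. Since moreover $\restr{h_1}{\partial V}=\restr{h_2}{\partial V}$, in fact $F_2 = F_1$ on the closed set $\C\setminus\mathrm{Int}(V)$. If $z\in\C\setminus V$, which is open, then for $\rho>0$ sufficiently small $B_\rho(z)\subseteq \C\setminus V$, so $F_2\equiv F_1$ on $B_\rho(z)$ and Remark~\ref{F_1 is Lipschitz on C} yields pointwise $L_1$-Lipschitzness of $F_2$ at $z$. If $z\in B_r(z_j)=\mathrm{Int}(\overline{B}_r(z_j))$ for some $j$, then for $\rho>0$ sufficiently small $B_\rho(z)\subseteq\overline{B}_r(z_j)$ and Claim~\ref{F_2 lip on each ball} gives pointwise $d_j$-Lipschitzness of $F_2$ at $z$.

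The remaining case is $z\in\partial B_r(z_j)$ for some $j$. By Claim~\ref{choice of r}~\ref{property i of r} the closed balls $\overline{B}_r(z_k)$, $k=1,\dots,s$, are pairwise disjoint with positive separation, so we may choose $\rho>0$ small enough that $B_\rho(z)\cap\overline{B}_r(z_k)=\emptyset$ for each $k\neq j$. For any $x\in B_\rho(z)$ either $x\in\overline{B}_r(z_j)$, in which case Claim~\ref{F_2 lip on each ball} applied to $z,x\in\overline{B}_r(z_j)$ yields $|F_2(z)-F_2(x)|\leq d_j|z-x|$, or else $x\in\C\setminus V$; in the latter case both $z,x\in\C\setminus\mathrm{Int}(V)$, so $F_2(z)=F_1(z)$ and $F_2(x)=F_1(x)$, whence $|F_2(z)-F_2(x)|=|F_1(z)-F_1(x)|\leq L_1|z-x|$. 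Thus $F_2$ is pointwise $L$-Lipschitz at $z$ in this case too.

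Having verified that $F_2$ is pointwise $L$-Lipschitz at every $z\in\C$, Lemma~\ref{pointwise Lip on convex open implies restriction is lipschitz} delivers that $F_2$ is $L$-Lipschitz on $\C$, completing the proof of the claim. The only real subtlety is the boundary case: one must be careful that the chosen $\rho>0$ simultaneously separates $z$ from every other ball $\overline{B}_r(z_k)$, which is exactly what the pairwise disjointness in Claim~\ref{choice of r}~\ref{property i of r} provides, and one must exploit the fact that $F_2=F_1$ not only on $\C\setminus V$ but on its closure $\C\setminus\mathrm{Int}(V)$.
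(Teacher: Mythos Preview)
Your proof is correct and follows essentially the same approach as the paper: combine the global Lipschitz constant $L_1$ for $F_1$ from Remark~\ref{F_1 is Lipschitz on C} with the constants $d_j$ from Claim~\ref{F_2 lip on each ball}, verify pointwise $L$-Lipschitzness everywhere (using that $F_2=F_1$ on $\C\setminus\mathrm{Int}(V)$ for the boundary case), and conclude via Lemma~\ref{pointwise Lip on convex open implies restriction is lipschitz}. Your treatment of the boundary case is simply a more explicit unpacking of what the paper states tersely.
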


\begin{innerproof}
    Recall Remark~\ref{F_1 is Lipschitz on C}. Since $F_1(z)=F_2(z)$ for $z\in (\C\setminus V)\cup\partial V$ we conclude $F_2$ is pointwise $L_1$-Lipschitz at each $z\in \C\setminus V$ and, moreover, 
    \begin{equation*}
        |F_2(z)-F_2(w)|\leq L_1|z-w|\quad\text{ for $z\in\partial V$ and $w\in \C\setminus V$}. 
    \end{equation*}

 Therefore, by Claim~\ref{choice of r}~\ref{property i of r}, Claim~\ref{F_2 lip on each ball} and by defining $L$ to be the maximum of $L_1$ and $\max_{j:z_j\in S(P')}d_j$, we conclude $F_2$ is pointwise $L$-Lipschitz at each $z\in \C$. Hence Lemma~\ref{pointwise Lip on convex open implies restriction is lipschitz} implies that $F_2$ is $L$-Lipschitz on $\C$. 
\end{innerproof}

We now turn our attention to the co-Lipschitzness of $F_2$. 

\begin{claim}\label{F_2 is pointwise co-Lip in open balls}
    For each $z_j\in S(P')$ and $z\in B_r(z_j)$, the mapping $F_2$ is pointwise $\alpha_j$-co-Lipschitz at $z$, where $\alpha_j$ is defined in \eqref{definition of alpha_j for strongly co-Lip on open balls claim}.
\end{claim}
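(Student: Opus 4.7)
The plan is to exploit the factorisation of $F_2$ on $B_r(z_j)$ given by \eqref{equation: defining the mapping F_2}: up to an additive constant, $F_2 - P(z_j)$ is a product of the standard co-Lipschitz building block $r^{m_j-1}f_{m_j}(\,\cdot\,-z_j)$ and a perturbation term $Q_j(h_2(\,\cdot\,))$ (or, more subtly, the bracketed factor in \eqref{decomposing F_2 into multiples of standard lipschit with phi function part 1}). The first produces the quantitative co-Lipschitz scaling via Corollary~\ref{standard is strongly co-Lipschitz}, while the second will be shown to stay bounded below by $\lvert Q_j(z_j)\rvert/2$ thanks to Claim~\ref{choice of r}~\ref{property iii of r} and~\ref{property iv of r}. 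This motivates setting
\[
    \alpha_j := \tfrac{1}{2}r^{m_j-1}\lvert Q_j(z_j)\rvert,
\]
which is positive by \eqref{Q_j(z_j) is not zero}. Since $F_2$ is an open map, by Corollary~\ref{nicer co-lip condition for open maps} and Remark~\ref{remark: strongly co-Lip in open subsets} it is enough to produce, for each $x\in B_r(z_j)$, a radius $\rho>0$ with $B_\rho(x)\subseteq B_r(z_j)$ such that $\lvert F_2(y)-F_2(x)\rvert\geq \alpha_j\lvert y-x\rvert$ for every $y\in B_\rho(x)$.

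First I would dispatch the case $x=z_j$ directly from \eqref{equation: defining the mapping F_2}: for any $y\in B_r(z_j)$, Lemma~\ref{archetypal case} gives $\lvert f_{m_j}(y-z_j)\rvert=\lvert y-z_j\rvert$, so $\lvert F_2(y)-F_2(z_j)\rvert = r^{m_j-1}\lvert y-z_j\rvert\cdot \lvert Q_j(h_2(y))\rvert$. As $h_2(B_r(z_j))=B_r(z_j)$, Claim~\ref{choice of r}~\ref{property iv of r} yields $\lvert Q_j(h_2(y))\rvert\geq \lvert Q_j(z_j)\rvert/2$, and the required inequality holds on all of $B_r(z_j)$.

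For $x\in B_r(z_j)\setminus\{z_j\}$, set $w=x-z_j\neq 0$ and recycle the decomposition \eqref{decomposing F_2 into multiples of standard lipschit with phi function part 1} already developed in the proof of Claim~\ref{F_2 lip on each ball}. Note $m_j\geq 2$, since $P'(z_j)=0$ forces the multiplicity of $z_j$ as a root of $P(z)-P(z_j)$ to exceed one. By Corollary~\ref{standard is strongly co-Lipschitz} the map $f_{m_j}$ is strongly $1$-co-Lipschitz at $w$, and by Corollary~\ref{Phi is bounded} each $\Phi_{l,m_j}(\,\cdot\,,w)$ is controlled near $w$; choose $\rho>0$ small enough that $B_\rho(x)\subseteq B_r(z_j)$, $\lvert f_{m_j}(z)-f_{m_j}(w)\rvert\geq \lvert z-w\rvert$, and $\lvert\Phi_{l,m_j}(z,w)\rvert\leq 1+n$ for every $y\in B_\rho(x)$, where $z=y-z_j$ (the last bound was verified in Claim~\ref{F_2 lip on each ball}). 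Because $r<1$ and $m_j\geq 2$ give $r^{l(m_j-1)/m_j}\leq r^{1/m_j}\leq \varepsilon_j$ for $1\leq l\leq n-m_j$ (by Claim~\ref{choice of r}~\ref{property iii of r}), the triangle inequality applied to the bracketed factor in \eqref{decomposing F_2 into multiples of standard lipschit with phi function part 1} gives
\[
    \left\lvert c_{0,j}+\sum_{l=1}^{n-m_j}r^{l(m_j-1)/m_j}c_{l,j}\Phi_{l,m_j}(z,w)\right\rvert \;\geq\; \lvert c_{0,j}\rvert-(1+n)\varepsilon_j\sum_{l=1}^{n-m_j}\lvert c_{l,j}\rvert \;\geq\; \tfrac{1}{2}\lvert Q_j(z_j)\rvert,
\]
where the last step uses the definition of $\varepsilon_j$ together with $c_{0,j}=Q_j(z_j)$ (and both sides of the inner estimate handle trivially the case $n=m_j$ or $\sum\lvert c_{l,j}\rvert=0$). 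Substituting into \eqref{decomposing F_2 into multiples of standard lipschit with phi function part 1} produces $\lvert F_2(y)-F_2(x)\rvert\geq \alpha_j\lvert y-x\rvert$ as required.

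The only real friction is ensuring that the tail sum in $\Phi_{l,m_j}$ does not swamp the leading coefficient $c_{0,j}$; this is precisely what Claim~\ref{choice of r}~\ref{property iii of r} and the judicious definition of $\varepsilon_j$ were tailored for. The case split $x=z_j$ versus $x\neq z_j$ is unavoidable because the decomposition \eqref{decomposing F_2 into multiples of standard lipschit with phi function part 1} requires $w\neq 0$, but at $x=z_j$ the inequality drops out of the factorisation directly. Beyond that, the argument is a line-by-line transcription of the Lipschitz calculation of Claim~\ref{F_2 lip on each ball} with the inequalities pointing the other way.
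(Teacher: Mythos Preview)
Your proof is correct and follows essentially the same route as the paper: the same $\alpha_j$, the same $x=z_j$ versus $x\neq z_j$ split, the same use of the decomposition \eqref{decomposing F_2 into multiples of standard lipschit with phi function part 1} combined with Corollary~\ref{standard is strongly co-Lipschitz} and Corollary~\ref{Phi is bounded}, and the same appeal to Corollary~\ref{nicer co-lip condition for open maps}. The only technical difference is cosmetic: you bound $\lvert\Phi_{l,m_j}\rvert\leq 1+n$ and squeeze the small factor $r^{1/m_j}\leq\varepsilon_j$ out of the prefactors $r^{l(m_j-1)/m_j}$ (using $m_j\geq 2$, which you correctly justify), whereas the paper instead takes $\varepsilon=r^{1/m_j}$ in Corollary~\ref{Phi is bounded} to obtain the sharper bound $\lvert\Phi_{l,m_j}\rvert\leq(1+n)r^{1/m_j}$ directly and then discards the prefactors via $r<1$; both reach the same estimate $(1+n)\varepsilon_j\sum\lvert c_{l,j}\rvert\leq\lvert Q_j(z_j)\rvert/2$.

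One remark worth flagging: the paper deliberately establishes the inequality \eqref{strongly co-Lipschitz equation for V} for $z\in\overline{B}_r(z_j)$, not just the open ball, and explicitly signals that the boundary case will be reused in Claim~\ref{claim for pointwise co-Lip on boundary}. Your argument as written covers only $z\in B_r(z_j)$, which is all the present claim demands, but you would need to revisit the boundary case separately when you come to prove pointwise co-Lipschitzness on $\partial V$.
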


\begin{innerproof} 
Fix $z_j\in S(P')$ and define 
    \begin{equation}\label{definition of alpha_j for strongly co-Lip on open balls claim}
        \alpha_j:=\dfrac{r^{m_j-1}\left|Q_j(z_j)\right|}{2}.
    \end{equation}
If $m_j=n$, then by Remark~\ref{remark: if m_j=n} it follows that, as $\alpha_j<r^{n-1}|Q_j(z_j)|$, $F_2$ is pointwise $\alpha_j$-co-Lipschitz at each $z\in B_r(z_j)$. 

Suppose that $m_j<n$. By \eqref{Q_j(z_j) is not zero} we have that $\alpha_j>0$. To show $F_2$ is pointwise $\alpha_j$-co-Lipschitz at each $z\in B_r(z_j)$ we first show for each $z\in \overline{B}_r(z_j)$ that there exists $\rho=\rho(z)>0$ such that 
    \begin{equation} \label{strongly co-Lipschitz equation for V}
        \left|F_2(z)-F_2(y)\right|\geq \alpha_j\left|z-y\right|
    \end{equation}
for each $y\in B_{\rho}(z)\cap \overline{B}_r(z_j)$. We emphasize that \eqref{strongly co-Lipschitz equation for V} holds not only for $z\in B_r(z_j)$ but also for $z\in \partial B_r(z_j)$, and this fact is used later in the proof of Claim~\ref{claim for pointwise co-Lip on boundary}. 

Consider first when $z=z_j$. Let $\rho=r$ and $y\in B_{\rho}(z)$. From \eqref{equation: defining the mapping F_2}, we deduce that 
    \begin{align*}
        |F_2(z)-F_2(y)|=r^{m_j-1}|y-z||Q_j(h_2(y))|. 
    \end{align*}
Since $h_2(B_r(z_j))=B_r(z_j)$, by Claim~\ref{choice of r}~\ref{property iv of r}, we conclude that $F_2$ satisfies \eqref{strongly co-Lipschitz equation for V} when $z=z_j$. 

Fix $z\in \overline{B}_r(z_j)\setminus\{z_j\}$. Let $\rho_1=\rho_1(z)>0$ be defined by 
    \begin{equation}\label{definition of rho_1 for strongly co-Lip in closed balls}
        \rho_1(z)=\begin{cases}
            r,\quad&\text{if $z\in\partial B_r(z_j)$};\\
            r-|z-z_j|,\quad&\text{if $z\in B_r(z_j)\setminus\{z_j\}$}.
        \end{cases}
    \end{equation}
By Corollary~\ref{standard is strongly co-Lipschitz}, since $f_{m_j}$ is strongly 1-co-Lipschitz at $\left(z-z_j\right)\in \overline{B}_r(0)$ there exists $\rho_2=\rho_2(z)>0$ such that for any $x\in B_{\rho_2}(z-z_j)$ it follows that 
    \begin{equation}\label{strongly co-Lipschitz property for the standard map in the claim for co-Lip in balls}
        \left|f_{m_j}(x)-f_{m_j}(z-z_j)\right|\geq \left|x-\left(z-z_j\right)\right|.
    \end{equation}
Further by Corollary~\ref{Phi is bounded}, for each $l\in\{1,\dots,n-m_j\}$, let $\rho_{3,l}>0$ be such that for each $y\in B_{\rho_{3,l}}(z)$, $\Phi_{l,m_j}(y-z_j,z-z_j)$ is well-defined and 
    \begin{equation}\label{equation: Phi bound for co-Lip of F_2}
        \left|\Phi_{l,m_j}(y-z_j,z-z_j)\right|<r^{1/m_j}+|z-z_j|^{l/m_j}\dfrac{l+m_j}{m_j}. 
    \end{equation}
Define $\rho_3:=\min\left\{\rho_{3,l}:1\leq l\leq n-m_j\right\}$ and let $\rho=\rho(z)>0$ be given by $\rho=\min\left(\rho_1,\rho_2,\rho_3\right)$. We claim for $y\in B_{\rho}(z)\cap \overline{B}_r(z_j)$ that 
    \begin{equation}\label{F_2 first expression regarding result for co-Lip}
        \left|F_2(y)-F_2(z)\right|\geq \alpha_j\left|f_{m_j}(y-z_j)-f_{m_j}(z-z_j)\right|.
    \end{equation}

Fix $y\in B_{\rho}(z)\cap \overline{B}_r(z_j)$. By using $y\in \overline{B}_r(z_j)$ for $F_2(y)$, $z\neq z_j$ and $y\in B_{\rho}(z)$ for the well-definedness of $\Phi_{l,m_k}(y-z_j,z-z_j)$ and recalling \eqref{decomposing F_2 into multiples of standard lipschit with phi function part 1}, it follows that
    \begin{align*}
        |F_2&(y)-F_2(z)|\geq\\
        &r^{m_j-1} \left(|c_{0,j}|-\max\limits_{l\in\{1,\dots,n-m_j\}}\left|\Phi_{l,m_j}\left(y-z_j,z-z_j\right)\right|\cdot\sum\limits_{k=1}^{n-m_j}r^{\frac{k(m_j-1)}{m_j}}|c_{k,j}|\right)\\
        &\times\left|f_{m_j}\left(y-z_j\right)-f_{m_j}\left(z-z_j\right)\right|.
    \end{align*}
Therefore, since $r<1$, see Claim~\ref{choice of r}, to show \eqref{F_2 first expression regarding result for co-Lip} it suffices to prove, as $c_{0,j}=Q_j(z_j)$, that for all $l\in\{1,\dots,n-m_j\}$, 
    \begin{equation}\label{required property for Phi and Q_j for co-Lip property in B}
        \left|\Phi_{l,m_j}\left(y-z_j,z-z_j\right)\right|\sum\limits_{k=1}^{n-m_j}|c_{k,j}|\leq \dfrac{|Q_j(z_j)|}{2}.
    \end{equation}
This is trivial when $\sum_{k=1}^{n-m_j}|c_{k,j}|=0$. Suppose $\sum_{k=1}^{n-m_j}|c_{k,j}|\neq 0$. By property \ref{property iii of r} of Claim~\ref{choice of r}, which refers to the inequality \eqref{properties of Phi following from uniform continuity} of Corollary~\ref{Phi is bounded}, since $|y-z_j|<\rho\leq \rho_3$, $z\in \overline{B}_r(z_j)$, $m_j\geq 1$ and $l\leq n-m_j$, note that
    \begin{align*}
        \left|\Phi_{l,m_j}\left(y-z_j,z-z_j\right)\right|&<r^{1/m_j}+|z-z_j|^{l/m_j}\dfrac{l+m_j}{m_j}&\text{by \eqref{equation: Phi bound for co-Lip of F_2},}\\
        &\leq (1+n)r^{1/m_j}\\
        &\leq\dfrac{|Q_j(z_j)|}{2\sum\limits_{k=1}^{n-m_j}|c_{k,j}|}&\text{ by Claim~\ref{choice of r}~\ref{property iii of r}.}
    \end{align*}
Thus \eqref{required property for Phi and Q_j for co-Lip property in B} follows and so \eqref{F_2 first expression regarding result for co-Lip} is satisfied, as claimed. 

Since $\rho\leq \rho_2$ and $y\in B_{\rho}(z)$ it follows $\left(y-z_j\right)\in B_{\rho_2}(z-z_j)$. Therefore, by \eqref{strongly co-Lipschitz property for the standard map in the claim for co-Lip in balls},
    \begin{equation*} 
        |f_{m_j}(y-z_j)-f_{m_j}(z-z_j)|\geq \left|(y-z_j)-(z-z_j)\right|=|y-z|.
    \end{equation*}
Hence, combining this with \eqref{F_2 first expression regarding result for co-Lip} yields
    \begin{equation*} 
        |F_2(y)-F_2(z)|\geq \alpha_j|f_{m_j}(z-z_j)-f_{m_j}(y-z_j)|\geq \alpha_j|y-z|.
    \end{equation*}
Thus we deduce that for each $z\in\overline{B}_r(z_j)$ there exists $\rho>0$ such that \eqref{strongly co-Lipschitz equation for V} holds for all $y\in B_{\rho}(z)\cap \overline{B}_r(z_j)$. 

If $z\in B_r(z_j)$, by \eqref{definition of rho_1 for strongly co-Lip in closed balls} and since $\rho\leq \rho_1$ we note $B_{\rho}(z)\subseteq B_{r}(z_j)$. Hence for each $y\in B_{\rho}(z)$, \eqref{strongly co-Lipschitz equation for V} is satisfied. Therefore, since $F_2=P\circ h_2$ is an open map, by Corollary~\ref{nicer co-lip condition for open maps}, Remark~\ref{remark: strongly co-Lip in open subsets} and since $B_r(z_j)$ is open in $\C$, we conclude that $F_2$ is pointwise $\alpha_j$-co-Lipschitz at any $z\in B_r(z_j)$.
\end{innerproof}

\begin{remark}\label{claim that F_2 is locally co-Lipschitz on interior of V}
Taking $c_2:=\min_{z_j\in S(P')}\alpha_j>0$ we deduce 
    \begin{equation} \label{F_2 is locally co-Lip on interior of V}
        F_2\text{ is pointwise }c_2\text{-co-Lipschitz at each }z\in\text{Int}(V).
    \end{equation}
\end{remark}

\begin{claim} \label{claim for pointwise co-Lip on boundary}
   There exists $c_3>0$ such that $F_2:\C\to\C$ is pointwise $c_3$-co-Lipschitz at each $z\in \partial V$.
\end{claim}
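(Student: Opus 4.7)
The plan is to combine two already-established strongly co-Lipschitz estimates for $F_2$ that meet cleanly along $\partial V$. Fix $z\in\partial V$; by Claim~\ref{choice of r}~\ref{property i of r} there is a unique $z_j\in S(P')$ with $z\in\partial B_r(z_j)$, and the discs $\overline{B}_{2r}(z_k)$ for $z_k\in S(P')$ are pairwise disjoint. Since $\partial V\subseteq B_R(0)\subseteq U_2$, we have $z\in U_2\setminus \text{Int}(V)$, so Remark~\ref{remark about strongly co-Lipschitz outside of Int(V)} supplies a radius $\rho_1>0$ with $B_{\rho_1}(z)\subseteq U_2\setminus W$ and
\[
|F_1(z)-F_1(y)|\ge c_0|z-y|\quad\text{for all }y\in B_{\rho_1}(z).
\]
On the other hand, the proof of Claim~\ref{F_2 is pointwise co-Lip in open balls} established \eqref{strongly co-Lipschitz equation for V} for every point of $\overline{B}_r(z_j)$, explicitly including the boundary; this produces $\rho_2>0$ such that
\[
|F_2(z)-F_2(y)|\ge \alpha_j|z-y|\quad\text{for all }y\in B_{\rho_2}(z)\cap\overline{B}_r(z_j).
\]

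Next I would pick $\rho:=\min(\rho_1,\rho_2,r)$ small enough that $B_\rho(z)$ meets no other disc $\overline{B}_r(z_k)$ with $z_k\neq z_j$; this is possible by the pairwise disjointness of the $\overline{B}_{2r}(z_k)$. For any $y\in B_\rho(z)$ one of two alternatives holds. If $y\in\overline{B}_r(z_j)$, the second estimate gives $|F_2(z)-F_2(y)|\ge\alpha_j|z-y|$. Otherwise $y\notin V$, so $F_2(y)=F_1(y)$ and (since $z\in\partial V$) $F_2(z)=F_1(z)$, whence the first estimate gives $|F_2(z)-F_2(y)|=|F_1(z)-F_1(y)|\ge c_0|z-y|$. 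Setting $c_3:=\min(c_0,c_2)>0$ with $c_2=\min_{z_j\in S(P')}\alpha_j$ from Remark~\ref{claim that F_2 is locally co-Lipschitz on interior of V}, we obtain $|F_2(z)-F_2(y)|\ge c_3|z-y|$ for every $y\in B_\rho(z)$, uniformly in $z\in\partial V$.

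Finally, $F_2=P\circ h_2$ is an open map, as was already noted following \eqref{equation: defining the mapping F_2}. Hence Corollary~\ref{nicer co-lip condition for open maps}, together with Remark~\ref{remark: strongly co-Lip in open subsets}, promotes the pointwise inequality above to pointwise $c_3$-co-Lipschitzness of $F_2$ at $z$. As $z\in\partial V$ was arbitrary, this completes the claim.

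The main obstacle, and the reason the claim is not immediate from Claims~\ref{F_2 is pointwise co-Lip in open balls} and \ref{F_1 is pointwise co-Lip outside U_1 bar}, is that $F_2$ switches formula across $\partial V$, so neither the ``inner'' nor the ``outer'' estimate on its own covers a full Euclidean neighbourhood of a boundary point. What rescues the argument is the deliberate choice in the proof of Claim~\ref{F_2 is pointwise co-Lip in open balls} to prove \eqref{strongly co-Lipschitz equation for V} at boundary points of $\overline{B}_r(z_j)$ as well, together with the disjointness of the enlarged discs $\overline{B}_{2r}(z_k)$, which guarantees that a sufficiently small ball around $z\in\partial V$ is split cleanly into a piece inside $\overline{B}_r(z_j)$ and a piece in $\C\setminus V$, with no interference from other components of $V$ where the two formulas would clash.
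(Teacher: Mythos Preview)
Your proof is correct and follows essentially the same approach as the paper: fix $z\in\partial B_r(z_j)$, invoke the strongly co-Lipschitz estimate \eqref{F_1 is strongly co-Lipschitz outside of Int(V)} for $F_1$ on the outer side (transferred to $F_2$ via $F_1=F_2$ on $\C\setminus\text{Int}(V)$), invoke \eqref{strongly co-Lipschitz equation for V} for $F_2$ on the inner side, take $c_3=\min(c_0,c_2)$, and conclude via openness of $F_2$ and Corollary~\ref{nicer co-lip condition for open maps}. The only cosmetic difference is that the paper folds the ``no interference from other components'' condition into its choice of $\rho_1$, whereas you handle it separately; note that your bound $\rho\le r$ already guarantees this, since the discs $\overline{B}_{2r}(z_k)$ are pairwise disjoint.
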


\begin{innerproof} 
 Let $c_3:=\min(c_0,c_2)$, where $c_0>0$ is given by Claim~\ref{F_1 co-Lip in U_2 outside W} and $c_2>0$ is given by Remark~\ref{claim that F_2 is locally co-Lipschitz on interior of V}. Since $F_2$ is an open map, it suffices by Corollary~\ref{nicer co-lip condition for open maps} to show for each $z\in\partial V$ there exists $\rho=\rho(z)>0$ such that if $x\in B_{\rho}(z)$, then
    \begin{equation}\label{inequality for verification of strongly co-Lipschitz on the boundary}
        \left|F_2(z)-F_2(x)\right|\geq c_3\left|z-x\right|.
    \end{equation}
Fix $z\in\partial V$ and let $j$ be such that $z\in \partial B_r(z_j)$. Let $\rho_1>0$ be such that $B_{\rho_1}(z)\subseteq U_2$ and $B_{\rho_1}(z)\cap V\subseteq \overline{B}_r(z_j)$; note such $\rho_1>0$ exists by Claim~\ref{choice of r}~\ref{property i of r}. Since $\partial V\subseteq U_2\setminus \text{Int}(V)$ and $\restr{F_1}{U_2\setminus \text{Int}(V)}=\restr{F_2}{U_2\setminus \text{Int}(V)}$, by \eqref{F_1 is strongly co-Lipschitz outside of Int(V)} and $c_3\leq c_0$ there exists $\rho_2\in (0,\rho_1)$ such that \eqref{inequality for verification of strongly co-Lipschitz on the boundary} is satisfied for each $x\in B_{\rho_2}(z)\cap (U_2\setminus\text{Int}(V))=B_{\rho_2}(z)\setminus B_r(z_j)$.

Further, by \eqref{strongly co-Lipschitz equation for V} there exists $\rho\in(0,\rho_2)$ such that \eqref{inequality for verification of strongly co-Lipschitz on the boundary} is satisfied for each $x\in B_{\rho}(z)\cap \overline{B}_r(z_j)$ since $c_3\leq c_2\leq \alpha_j$; see Remark~\ref{claim that F_2 is locally co-Lipschitz on interior of V}. 

We then conclude that \eqref{inequality for verification of strongly co-Lipschitz on the boundary} is satisfied for each $x\in B_{\rho}(z)$. As $F_2$ is an open map, Corollary~\ref{nicer co-lip condition for open maps} implies the statement of Claim~\ref{claim for pointwise co-Lip on boundary}.
\end{innerproof}

\begin{claim}\label{claim: showing F_2 is c-co-Lipschitz} 
There exists $c>0$ such that $F_2$ is $c$-co-Lipschitz on $\C$. 
\end{claim}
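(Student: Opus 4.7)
The plan is to assemble the pointwise co-Lipschitz bounds that have already been established on the three pieces $\C\setminus V$, $\text{Int}(V)$, and $\partial V$, take the worst of the constants, and then invoke Lemma~\ref{everywhere local co-Lip} to upgrade pointwise co-Lipschitzness to the global statement. Concretely, set $c:=\min(c_1,c_2,c_3)>0$, where $c_1$ comes from Remark~\ref{remark about pointwise co-Lip outside of W}, $c_2$ from Remark~\ref{claim that F_2 is locally co-Lipschitz on interior of V}, and $c_3$ from Claim~\ref{claim for pointwise co-Lip on boundary}.

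For $z\in\C\setminus V$, since $V$ is closed there is a neighbourhood of $z$ contained in $\C\setminus V$, on which $F_2=F_1$ by \eqref{equation: defining the mapping F_2}. Because $W\subseteq V$ we have $\C\setminus V\subseteq \C\setminus W$, so Remark~\ref{remark about pointwise co-Lip outside of W} supplies that $F_1$, and hence $F_2$, is pointwise $c_1$-co-Lipschitz at $z$. For $z\in\text{Int}(V)$ the bound $c_2$ from Remark~\ref{claim that F_2 is locally co-Lipschitz on interior of V} gives pointwise $c_2$-co-Lipschitzness at $z$. For $z\in\partial V$ Claim~\ref{claim for pointwise co-Lip on boundary} yields pointwise $c_3$-co-Lipschitzness. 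Since these three sets cover $\C$ and each constant dominates $c$, $F_2$ is pointwise $c$-co-Lipschitz at every point of $\C$.

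Finally, $F_2=P\circ h_2$ is continuous as the composition of a polynomial with a homeomorphism. Applying Lemma~\ref{everywhere local co-Lip} to the continuous, everywhere pointwise $c$-co-Lipschitz map $F_2:\C\to\C$ yields that $F_2$ is globally $c$-co-Lipschitz, completing the claim. There is no real obstacle here: the hard technical work has already been carried out in Claims~\ref{F_1 co-Lip in U_2 outside W}, \ref{F_1 is pointwise co-Lip outside U_1 bar}, \ref{F_2 is pointwise co-Lip in open balls} and \ref{claim for pointwise co-Lip on boundary}; the only point requiring a brief verification is that pointwise co-Lipschitzness of $F_1$ at a point of $\C\setminus V$ transfers to $F_2$, which is immediate because $\C\setminus V$ is open and $F_1\equiv F_2$ there, so the co-Lipschitz inclusion of balls of sufficiently small radius is preserved verbatim.
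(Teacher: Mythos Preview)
Your proof is correct and follows essentially the same approach as the paper: define $c=\min(c_1,c_2,c_3)$, verify pointwise $c$-co-Lipschitzness on each of $\C\setminus V$, $\text{Int}(V)$ and $\partial V$ using the previously established results, and then invoke Lemma~\ref{everywhere local co-Lip}. Your explicit justification that $\C\setminus V$ is open (so that the pointwise co-Lipschitz property of $F_1$ transfers directly to $F_2$ there) and your mention of the continuity hypothesis needed for Lemma~\ref{everywhere local co-Lip} are slightly more detailed than the paper's version, but the argument is the same.
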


\begin{innerproof} 
Let $c:=\min(c_1,c_2,c_3)$, where $c_1$ is given by Remark~\ref{remark about pointwise co-Lip outside of W}, $c_2$ is given by Remark~\ref{claim that F_2 is locally co-Lipschitz on interior of V} and $c_3$ is given by Claim~\ref{claim for pointwise co-Lip on boundary}. Recall by \eqref{F_1 is locally co-Lip on C take V} of Remark~\ref{remark about pointwise co-Lip outside of W} that $F_1$ is pointwise $c_1$-co-Lipschitz at each $z\in \C\setminus W$. As $F_1(z)=F_2(z)$ for $z\in \C\setminus V$ and $W\subseteq V$, we conclude 
\begin{equation}\label{F_2 is loc co-Lip outside V}
F_2\text{ is pointwise }c\text{-co-Lipschitz at each }z\in\C\setminus V. 
\end{equation}
Also, Remark~\ref{claim that F_2 is locally co-Lipschitz on interior of V} implies that 
\begin{equation}\label{F_2 is pointwise co-Lipschitz outside of the boundary of V}
    F_2\text{ is pointwise }c\text{-co-Lipschitz at each }z\in \text{Int}(V).
\end{equation}
From Claim~\ref{claim for pointwise co-Lip on boundary}, \eqref{F_2 is loc co-Lip outside V} and \eqref{F_2 is pointwise co-Lipschitz outside of the boundary of V}, we conclude that $F_2$ is pointwise $c$-co-Lipschitz at each $z\in\C$. Hence an application of Lemma~\ref{everywhere local co-Lip} implies $F_2$ is $c$-co-Lipschitz on $\C$. 
\end{innerproof}

Finally, Claims~\ref{claim: F_2 is L-Lipschitz on C} and \ref{claim: showing F_2 is c-co-Lipschitz} together imply that $f:=F_2=P\circ h_2$ is an $L$-Lipschitz and $c$-co-Lipschitz mapping of the plane. 
\end{proof}

\end{document}